\documentclass{article}
\usepackage{graphicx} % Required for inserting images

\usepackage[english]{babel}
\usepackage{csquotes}
\usepackage[toc,page]{appendix}

\usepackage[
  a4paper,
  inner=2.2cm, outer=2.6cm,
  top=3cm,    bottom=3cm,
  bindingoffset=0.5cm,
  headsep=1em
]{geometry}
 
\usepackage{amsmath,amsthm}
\usepackage{mathtools}        % adds no math family, only macros
\usepackage{stix}             % text + math from STIX (Type 1)
 \usepackage{mathrsfs}
\usepackage{microtype}

\usepackage{graphicx}
\usepackage{xcolor}
\usepackage{booktabs}
\usepackage{array}
\usepackage{caption}
\usepackage{subcaption}
\usepackage{tikz}
\usepackage{enumitem}
\usepackage{bm}
\usepackage{xfrac}
\usetikzlibrary{calc,arrows.meta,decorations.pathmorphing}
\usepackage{csquotes}

\usepackage{titlesec}

\titleformat{\section}
  {\normalfont\large\scshape\filcenter}
  {\thesection.}{1em}{}

\titlespacing*{\section}
  {0pt}{2.5ex plus 1ex minus .2ex}{1.5ex plus .2ex}

\titleformat{\subsection}[runin]
  {\normalfont\bfseries}
  {\thesubsection.}{0.5em}{}[.]

\titlespacing*{\subsection}
  {0pt}{1.5ex plus 0.5ex minus 0.2ex}{1em}

\titleformat{\subsubsection}[runin]
  {\normalfont\itshape}
  {\thesubsubsection.}{0.5em}{}[.]

\titlespacing*{\subsubsection}
  {0pt}{1.2ex plus 0.4ex minus 0.2ex}{1em}

\titleformat{\chapter}[display]
  {\normalfont\huge\bfseries}
  {\chaptertitlename\ \thechapter}
  {20pt}
  {\Huge}

\titlespacing*{\chapter}{0pt}{50pt}{80pt}
 
\usepackage[
  colorlinks=true,
  linkcolor=black,
  citecolor=black,
  urlcolor=blue!60!black,
  bookmarksnumbered=true
]{hyperref}
\usepackage[nameinlink,capitalise,noabbrev]{cleveref}

\numberwithin{equation}{section}

\usepackage[
  backend=biber,
  style=alphabetic,
  sorting=nyt,
  maxnames=10,
  giveninits=true,
  doi=true, isbn=false, url=false
]{biblatex}
\usepackage{aliascnt}

\newcommand{\newshared}[3]{%
  \newaliascnt{#1}{thm}%
  \newtheorem{#1}[#1]{#2}%   <- [#1] aggiunto
  \aliascntresetthe{#1}%
  \crefname{#1}{#2}{#3}%
  \Crefname{#1}{#2}{#3}%
}

\theoremstyle{plain}
\newtheorem{thm}{Theorem}[section]
\crefname{thm}{Theorem}{Theorems}
\Crefname{thm}{Theorem}{Theorems}

\newshared{prop}{Proposition}{Propositions}
\newshared{lem}{Lemma}{Lemmas}
\newshared{cor}{Corollary}{Corollaries}

\newtheorem*{thm*}{Theorem}   % unnumbered, no aliascnt needed

\theoremstyle{definition}
\newshared{dfn}{Definition}{Definitions}
\newshared{ex}{Example}{Examples}
\newshared{prb}{Problem}{Problems}
\newshared{ass}{Assumption}{Assumptions}

\theoremstyle{remark}
\newshared{rmk}{Remark}{Remarks}
\newshared{notation}{Notation}{Notations}
 
\renewcommand{\div}{\operatorname{div}}

\newcommand{\tr}{\operatorname{tr}}

\newcommand{\dist}{\operatorname{dist}}
\newcommand{\diam}{\operatorname{diam}}

\DeclareMathOperator*{\esssup}{ess\,sup}
\DeclareMathOperator*{\essinf}{ess\,inf}

\newcommand{\loc}{\mathrm{loc}}

\newcommand{\R}{\mathbb{R}}

\newcommand{\Q}{Q}

\newcommand{\Mink}{\R^{m,1}}
\newcommand{\II}{\mathrm{II}}
\newcommand{\norm}[1]{\lVert#1\rVert}

\newcommand{\abs}[1]{\lvert#1\rvert}

\newcommand{\set}[1]{\{#1\}}

\newcommand{\inte}{\operatorname{int}}

\newcommand{\eps}{\varepsilon}
\newcommand{\scal}[2]{\langle #1,#2\rangle}
\newcommand{\Nu}{\mathrm{N}}
\newcommand{\Mu}{\mathrm{M}}
\newcommand{\Haus}{\mathscr{H}}

\newcommand{\babla}{\overline\nabla}

\newcommand{\diff}{d}
\newcommand{\de}{\mathrm d}
\newcommand{\del}{\partial}

\newcommand{\1}{\mathbb{1}}

\title{\textbf{Free boundary space-like graphs} \\ \textbf{with prescribed mean curvature}}
\author{%
  Lorenzo Maniscalco
}
\date{July 2026}

\begin{document}

\maketitle

\scriptsize \begin{center} Dipartimento di Matematica ``Giuseppe Peano'' \\ 
 Università degli Studi di Torino, Via Carlo Alberto 10, 10123 Torino, Italy\\
\texttt{lorenzo.maniscalco@unito.it}
\end{center}
\bigskip

\begin{abstract}
    We address the prescribed Lorentzian mean curvature problem over a convex bounded domain $\Omega$ of $\R^m$ with bounded right-hand side and homogeneous capillary boundary condition. We prove that the problem has a unique $W^{2,2}$-regular weak solution $u$ with zero mean and that $|Du| \leq 1 - \theta$ for some $\theta\in(0,1)$ only depending on the data. Such $u$ is also the unique maximizer of an associated functional. A key step in proving that the maximizer is a weak solution consists in showing that it has no light segments, i.e. segments along which $|Du| = 1$. This holds for arbitrary bounded capillary boundary data and can thus be an interesting result on its own.
\end{abstract}

\bigskip
\medskip
\noindent\footnotesize\textbf{2020 Mathematics Subject Classification.}
35J93, 53C50, 35J66, 35B45, 49Q05, 53A10, 78A25.\par
\noindent\footnotesize\textbf{Key words and phrases.}
Prescribed Lorentzian mean curvature; space-like graphs; capillary boundary
condition; free boundary; Born--Infeld equation; a priori gradient estimates.\normalsize

\section{Introduction}

\subsection*{Main result} In this paper we investigate existence and regularity of solutions to the following oblique derivative problem on a bounded Lipschitz domain $\Omega \subset\R^m$
\begin{align}\label{eq:problem_Rm}
        -\div\Bigg(\dfrac{Du}{\sqrt{1 - |Du|^2}}\Bigg)= \rho \quad \text{in $\Omega$,} \qquad \qquad
        \dfrac{Du \cdot n}{\sqrt{1 - |Du|^2}} = \psi \quad \text{on $\del\Omega$},
\end{align}
where $n$ is the exterior unit normal of $\del\Omega$ and $\rho$ and $\psi$ are given functions on $\Omega$ and $\del\Omega$ respectively. A \emph{weak solution} to \eqref{eq:problem_Rm} is a function $u \in W^{1,\infty}(\Omega)$ such that
% \begin{align*}
%     w_u \doteq \frac{1}{\sqrt{1 - |Du|^2}} \in L^1(\Omega)
% \end{align*}
% and
\begin{align}
    w_u &\doteq \frac{1}{\sqrt{1 - |Du|^2}} \in L^1(\Omega),\label{eq:energy} \\ \int_\Omega w_u Du \cdot D\eta \, dx &= \int_\Omega \rho \eta \, dx + \int_{\del\Omega} \psi\eta \, d\Haus^{m-1}\label{eq:Rm_weak_sol}
\end{align}
for every $\eta \in C^1(\overline\Omega)$. For reasons that will be explained shortly, the function $w_u$ is called \emph{tilt function} or \emph{energy density} of $u$. Testing \eqref{eq:Rm_weak_sol} with $\eta = 1$ gives a necessary condition on $\rho$ and $\psi$  for the existence of a solution
\begin{align}\label{eq:compatibility}
    \int_\Omega \rho \, dx + \int_{\del\Omega} \psi \, d\Haus^{m-1} = 0.
\end{align}
Our result deals with the case of convex domains and homogeneous boundary conditions.

\begin{thm}\label{thm:Rm_main}
    Let $\Omega\subseteq \R^m$ be a bounded convex domain and let $\rho$ be a zero-mean bounded function on $\Omega$. If $\psi = 0$, then problem \eqref{eq:problem_Rm} has a weak solution in $W^{2,2}(\Omega)\cap C(\overline\Omega)$. Such solution is unique up to additive constants and, if $|\rho|\leq \Lambda$ on $\Omega$ for some $\Lambda\geq 0$, there exists a positive constant $C = C(m,\Omega,\Lambda)$ such that $w_u \leq C$.
\end{thm}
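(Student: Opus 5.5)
We will follow the variational route suggested in the abstract. Consider the functional
\begin{align*}
    I(v) = \int_\Omega \Big(1 - \sqrt{1-|Dv|^2}\Big)\,dx - \int_\Omega \rho v\,dx
\end{align*}
on the convex set $K = \{v \in W^{1,\infty}(\Omega) : |Dv|\le 1,\ \int_\Omega v = 0\}$. Formally, \eqref{eq:problem_Rm} with $\psi = 0$ is the Euler--Lagrange equation of $I$ with free (natural) boundary conditions, and $\rho$ has zero mean, so $-I$ is the functional to be maximized. Since $\Omega$ is bounded and convex, hence Lipschitz and connected, elements of $K$ are uniformly bounded (Poincar\'e--Wirtinger with $|Dv|\le1$) and equi-Lipschitz. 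Ascoli--Arzel\`a and the convexity and lower semicontinuity of $\int(1-\sqrt{1-|Dv|^2})$ under weak-$*$ convergence of gradients then give a minimizer $u\in K$. Because the integrand is strictly convex in $Dv$ and the constraint $\int v = 0$ fixes constants, the minimizer is unique. Uniqueness of weak solutions up to constants follows from a monotonicity argument: if $u_1,u_2$ are weak solutions, subtract the two weak formulations \eqref{eq:Rm_weak_sol} and test with $u_1-u_2$, using the strict monotonicity of $p\mapsto p/\sqrt{1-|p|^2}$. Testing with a Lipschitz function is justified because $w_{u_i}\in L^1$ and the test function is $W^{1,\infty}$, by density.

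The main obstacle is showing that the minimizer is a weak solution, since $K$ is not open and variations may push $|Du|$ up to $1$. Following the abstract, we first prove that $u$ has no \emph{light segments}, meaning no segments in $\overline\Omega$ on which $u$ is affine with slope $1$. We would adapt the Bartnik--Simon argument. Suppose $u(x)-u(y)=|x-y|$ for some $x\ne y$. By the $1$-Lipschitz bound, $u$ is then affine with slope $1$ on the segment $[x,y]$, and the segment extends until it hits $\partial\Omega$. Near such a segment we build a competitor that lowers $u$ slightly in a thin tube, where $\rho$ bounded and zero boundary data are used. The square-root degeneracy makes the energy gain of order $\sqrt{\varepsilon}$, while the loss from the $\rho$ term is only of order $\varepsilon$. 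The new feature compared with the Dirichlet case is segments touching $\partial\Omega$. Here convexity ensures that the segment meets the boundary transversally or lies in a supporting hyperplane, and the natural boundary condition imposes no constraint on endpoints, so the perturbation is admissible up to the boundary.

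Once light segments are excluded, the standard argument gives that $w_u\in L^1$ and that $u$ satisfies \eqref{eq:Rm_weak_sol}, by computing one-sided derivatives of $I(u+t(\eta-u))$ (or $I$ along variations $u+t\eta$ truncated near the light cone).

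For the gradient bound, we approximate. Regularize $\Omega$ by smooth strictly convex domains $\Omega_k$ and $\rho$ by smooth zero-mean $\rho_k$, and solve the Neumann problems, for instance via continuity or Leray--Schauder with capped operators. A uniform bound $w\le C(m,\Omega,\Lambda)$ is then obtained from a maximum principle for $w$. The quantity $w$ satisfies a Jacobi-type inequality $\Delta_g w\ge w|\mathrm{II}|^2 - C\Lambda w^{?}$-type terms on the graph, handled by Stampacchia or Moser iteration relative to the graph metric. At a boundary maximum, the homogeneous capillary condition $Du\cdot n=0$ together with convexity ($\mathrm{II}_{\partial\Omega}\ge0$) gives $\partial_n w\le 0$, so the boundary cannot host a strict maximum (a Hopf-type sign argument). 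Interior estimates of Bartnik--Simon type with bounded $\rho$ then bound $w$ by a constant depending on $\Lambda$, $m$, and $\operatorname{diam}\Omega$. Given $w$ bounded, the equation is uniformly elliptic. Testing the differentiated equation (Bernstein/Miranda--Talenti style, using convexity to sign the boundary term $\int_{\partial\Omega}\mathrm{II}(Du,Du)\ge0$) gives a uniform $W^{2,2}$ bound. Passing to the limit, and identifying the limit with the unique maximizer, yields $u\in W^{2,2}(\Omega)\cap C(\overline\Omega)$ with $w_u\le C$.
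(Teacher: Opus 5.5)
Your existence and uniqueness of the minimizer, and your uniqueness of weak solutions by monotonicity, are fine. The gap is at the central step. The uniform tilt bound $w\le C(m,\Omega,\Lambda)$ for the approximating Neumann solutions is asserted, not proved, and the mechanism you propose does not give it.

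\emph{The boundary step is fine, the interior step is not.} On strictly convex approximating domains, $\del_n w=-w^3\II_{\del\Omega}(Du,Du)<0$ wherever $Du\neq0$, which excludes a boundary maximum, exactly as in \cref{thm:capillary_grad_est}. In the interior, the Jacobi identity reads $\Delta_g w=w|A|^2+\scal{\nabla u}{\nabla H}$ with $H=-\rho_k$. Since $D\rho_k$ blows up when you mollify an $L^\infty$ function, this term is not a lower-order term controlled by $\Lambda$; that is what your ``$w^{?}$'' hides.

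\emph{None of your three fallbacks closes this.}
\begin{itemize}
\item Any Bernstein or maximum-principle argument, such as the paper's with $f=we^{\lambda u}$, gives a bound in terms of $\norm{\rho_k}_{C^1}$, which is useless in the limit. The paper says this explicitly.
\item Putting $\scal{\nabla u}{\nabla H}$ in divergence form and running Moser or Stampacchia iteration would need a Sobolev inequality on $(\Omega,g)$ with constants independent of the solution. You have no candidate for one: $g=\delta-du\otimes du$ and $dV_g=w^{-1}dx$ degenerate precisely where $|Du|\to1$. This is why Bartnik and Simon use a monotonicity formula instead.
\item ``Bartnik--Simon interior estimates for bounded $\rho$'' are not pointwise a priori bounds. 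For $H\in L^\infty$, strict space-likeness comes from their monotonicity formula combined with the absence of light segments. That formula requires $L_{2R}(o)\Subset\Omega$, which fails near $\del\Omega$.
\end{itemize}
Note also that if your bound held, your no-light-segment step would be superfluous.

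\emph{``No light segments implies weak solution'' is not standard either.} Variations $(1-t)u$ do give $w_u\in L^1$. Variations inside $K$, however, only yield the inequality $\int w_uDu\cdot D(\eta-u)\ge\int\rho(\eta-u)$ for $\eta\in K$. The equation needs two-sided variations $u\pm t\eta$, i.e.\ $|Du|\le1-\delta$ on $\operatorname{supp}D\eta$, and up to $\del\Omega$ for the Neumann condition. That is the whole difficulty.

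\emph{The missing idea is the free-boundary monotonicity formula (\cref{lem:BS_monotonicity}).} For a smooth solution with $Du\cdot n=0$ on a convex domain, the boundary terms in Green's identity have a sign. First, $\scal{\nabla\ell}{\nu}=\ell^{-1}(x-o)\cdot n\ge0$. Second, differentiating $Du\cdot n=0$ along $Du$ gives $\del_n v^\alpha=\alpha w^{2-\alpha}\II_{\del\Omega}(Du,Du)\ge0$. Hence the Bartnik--Simon inequality holds at \emph{every} $o\in\overline\Omega$, with $R>\diam\Omega$ and constants depending only on $m,\Lambda,R$.

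Applied to the approximations, this gives two things:
\begin{itemize}
\item a uniform $W^{2,2}$ bound;
\item the estimate $\int_{\Omega_j}v_j^{\alpha+1}\le C\,v_j(o)^\alpha$ for all $o$.
\end{itemize}
So either the limit has $v=0$ a.e., hence $D^2u=0$ and $u$ is affine and light-like, or $v_j\ge\vartheta>0$ uniformly. The no-light-segment theorem is used only to rule out the first alternative.

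\emph{Your light-segment argument is only a heuristic.} A light segment has measure zero. You cannot lower $u$ near an interior point of it without also lowering the upper endpoint, because of the $1$-Lipschitz constraint. You never produce a set of positive measure where $|Du|$ is close to $1$, which the $\sqrt\eps$ gain requires.

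The paper argues differently. It first uses anti-peeling to push the endpoints to $\del\Omega$. It then compares $u$ on $\Omega\cap B_r(x)$, where $x$ is the lower endpoint, with the singular CMC barrier $v^-_{o,\Lambda,K}$ centred at the point $o$ of the segment at distance $r$ from $x$. It takes $K$ large so that the barrier's capillary datum is $\le\psi$ on $\del\Omega\cap B_r(x)$; convexity gives $n\cdot(z-o)\ge\dist(o,\del\Omega)>0$ there. Large $K$ also gives $v\le u$ on $\Omega\cap\del B_r(x)$. The comparison principle then forces $v$ to contain a light segment, which is absurd.
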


The Dirichlet problem on bounded domains for the operator in \eqref{eq:problem_Rm} was first addressed by Bartnik and Simon in \cite{BartnikSimon1982} for a bounded $\rho$. In a series of papers \cite{KlyMik92, KlyMik93, Klyachin03} Klyachin and Miklyukov investigated the case where $\rho$ is a finite sum of Dirac deltas, and more recently Byeon, Ikoma, Malchiodi and Mari \cite{BIMM} addressed the case of more general measures. To the best of our knowledge, \cref{thm:Rm_main} is the first existence and regularity result for a non-Dirichlet boundary problem for this operator. 

\subsection*{Relevance in Physics and Geometry} The operator in \eqref{eq:problem_Rm} arises naturally in non-linear electrostatics and Lorentzian geometry. According to the Born--Infeld model of Electromagnetism -- a non-linear theory of the electromagnetic field developed by M. Born and L. Infeld in the 30s to overcome the infinite self-energy of the static point charge \cite{Born33,BornInfeld33,BornInfeld34} -- the electrostatic potential $u$ generated by a charge $\rho$ solves
\begin{align}\label{eq:mean_curva_op_Rm}
    H_u \doteq \div\left( \frac{Du}{\sqrt{1 - |Du|^2}}\right) = -\rho.
\end{align}
% A major feature of this model is that it provides for an a priori bound on the intensity of the electrostatic field: $|\mathbf E| < 1$.
The interpretation of $\rho$ as an electrostatic charge makes it natural to investigate the problem when $\rho$ is measure, as it has been done extensively in the literature.

It can be shown that the energy density of the electric field $-Du$ generated by $\rho$ is $w_u - 1 + u\rho$. Hence, if $\rho\in L^2(\Omega)$, the condition \eqref{eq:energy} is equivalent to the electrostatic energy of the field being finite. For this reason, as anticipated, the function $w_u$ is sometimes called, with a slight abuse of notation, the \emph{energy density function} of $u$. Complete accounts of the Born--Infeld model can be found in \cite{BialynickiBirula83}, \cite{Yang2000} and \cite{Kiessling} (see also \cite{BonPompDav} and the Appendix of \cite{BIMM}). The Born--Infeld equation was also shown to be relevant in string theory (see for instance \cite{Gibbons97}, \cite{Yang2000}).

\cref{thm:Rm_main} is also an existence and regularity result for capillary space-like  graphs in a cylinder of Minkowski spacetime $\Mink$ with prescribed mean curvature. In Cartesian coordinates, the Minkowski scalar product writes $\scal{\cdot}{\cdot} = - (dx^0)^2 + (dx^1)^2 + \dots + (dx^m)^2$. An embedded smooth hypersurface $\Sigma$ is \emph{strictly space-like} if $\scal{\cdot}{\cdot}$ restricts to a Riemannian metric on $\Sigma$. If $\Sigma$ is the graph of a function $u:\Omega\to\R$ on some $\Omega\subseteq\R^m$, the space-like condition is equivalent to $|Du| < 1$. A space-like graph admits a global unit normal given by
\begin{align*}
    N = w_u(Du + \del_0)
\end{align*}
and its mean curvature in direction $N$ is $H_u$, the differential operator defined in \eqref{eq:mean_curva_op_Rm}. The boundary condition in \eqref{eq:problem_Rm} can be interpreted as a capillary condition, that is, a prescription of the contact angle between $\Sigma$ and the boundary of the cylinder $\R\times \Omega$. Indeed, if we identify $n$ with its horizontal extension $(0,n)$ along $\R \times \del\Omega$, the contact angle $\gamma$ is defined by
\begin{align}\label{eq:contact_angle}
    \sinh\gamma \doteq - \scal{N}{n} = - w_u Du \cdot n.
\end{align}
If $\gamma = 0$ we say that $\Sigma$ is a \emph{free boundary} hypersurface.

Notice also that, since $\scal{\del_0}{\del_0} = - 1$, the function $w_u$ can be interpreted as the hyperbolic cosine of the angle $\beta$ between $N$ and the vertical direction $\del_0$:
\begin{align*}
    \cosh \beta \doteq - \scal{N}{\del_0} = w_u.
\end{align*}
Such angle tends to infinity as the space-like condition fails, i.e. as $|Du| \to 1$. The function $w_u$ is also called the \emph{tilt function} of (the graph of) $u$ and the bound $w_u \leq C$ established by \cref{thm:Rm_main} can be interpreted as the \emph{uniform space-likeness} of (the graph of) $u$.

% \begin{dfn}
%     Let $\Omega$ be a bounded Lipschitz domain. A \emph{weak solution} to \eqref{eq:problem_Rm} is a weakly space-like function $u$ such that $w \in L^1(\Omega)$ and 
%     \begin{align}
%         \int_\Omega wDu \cdot D\eta \, dx = \int_\Omega \eta\rho \, dx + \int_{\del\Omega} \eta\psi \, d\Haus^{m-1}, \qquad \forall\eta \in C^1(\overline\Omega).
%     \end{align}
% \end{dfn}

% Testing with $\eta \equiv 1$ gives the \emph{compatibility condition}
% \begin{align}
%     \int_\Omega \rho + \int_{\del\Omega} \psi = 0,
% \end{align}
% which is necessary for the existence of weak solutions to \eqref{eq:problem_Rm}

\subsection*{A no-light-segment result}

Problem \eqref{eq:problem_Rm} appears as the Euler--Lagrange equation of the functional
\begin{align}\label{eq:Rm_functional}
    I_{\rho,\psi}(u) \doteq \int_\Omega \Big(\sqrt{1 -|Du|^2} + \rho u \Big) \, dx + \int_{\del\Omega} \psi u \, d\Haus^{m-1}
\end{align}
defined on the set of $1$-Lipschitz functions on $\Omega$. The density $w_u^{-1} =\sqrt{1 - |Du|^2}$ is the volume element of the graph of $u$ with respect to the metric induced by the immersion in $\Mink$. The functional  $I_{\rho,\psi}$ is easily seen (see \cref{lem:existence_maximizer} below) to have a unique maximizer in the class of $1$-Lipschitz functions with zero mean
% -- that we will refer to as the \emph{solution of the variational problem}. 
However, due to the lack of smoothness of volume element as $|Du| = 1$, it is not {a priori} guaranteed that such maximizer would be a weak solution to \eqref{eq:problem_Rm}.
% Most of the effort is devoted to prove that the maximizer is uniformly space-like (i.e. $w_u \in L^\infty(\Omega)$ or $|Du| \leq 1 - \theta$ for some $\theta \in (0,1)$), which suffice to show that the maximizer is a weak solution. The proof of \cref{thm:Rm_main} follows the scheme of the proof of \cite[Theorem 4.1]{BartnikSimon1982} and relies on three following facts.
% \subsubsection*{1 . Absence of light segments}
A major fact that could keep the maximizer of $I_{\rho,\psi}$ from being a weak solution to \eqref{eq:problem_Rm} is the possible presence of \emph{light segments}.

\begin{dfn}
    For every $x,y \in \R^m$ let $\overline{xy}\doteq \set{sx + (1-s)y \ | \ s\in[0,1]}$. A segment $\overline{xy}$ is a \emph{light segment} for a $1$-Lipschitz function $u$ if $\overline{xy}\subseteq\overline\Omega$ and $u(y) - u(x) = |y - x|$.
\end{dfn}

As \cite[Corollary 1.10]{BIMM} shows, the presence of light segments for the maximizer, other than being a technical difficulty, can be a genuine obstruction for the latter to be a weak solution. Our no-light-segment Theorem holds for any bounded boundary prescription $\psi$, hence could prove useful also in investigating extensions of \cref{thm:Rm_main} to non-homogeneous capillary conditions.

\begin{thm}\label{thm:capillary_no_light}
    Let $\Omega\subseteq\R^m$ be a bounded, convex domain. For fixed $\rho \in L^\infty(\Omega)$ and $\psi \in L^\infty(\del\Omega)$, let $u$ be the maximizer of $I_{\rho,\psi}$ among the $1$-Lipschitz functions of zero mean. Then $u$ does not have light segments in the interior of $\Omega$.
\end{thm}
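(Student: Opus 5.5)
The strategy is to adapt the Bartnik--Simon argument from \cite{BartnikSimon1982}: light segments of the maximizer, if any, must propagate to the boundary. Then convexity of $\Omega$ together with the boundedness of $\psi$ shows that a light segment ending on $\del\Omega$ lets us build a competitor with strictly larger $I_{\rho,\psi}$.

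\emph{Step 1 (structure of light segments).} Suppose $\overline{xy}$ is a light segment with $x,y\in\Omega$. Set $e=(y-x)/|y-x|$. Since $u$ is $1$-Lipschitz and $u(y)-u(x)=|y-x|$, we have $u(x+te)=u(x)+t$ on $\overline{xy}$. I will first show the segment can be extended in both directions until it hits $\del\Omega$. This follows the maximal-extension argument of Bartnik--Simon. Assume an endpoint, say $y$, is interior and the segment cannot be prolonged past it. Then we perturb $u$ near $y$ by $u_\eps=\max(u,\phi_\eps)$ or $\min(u,\phi_\eps)$, where $\phi_\eps$ is a suitably tilted cone/plane barrier. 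Concavity of $p\mapsto\sqrt{1-|p|^2}$ and its infinite slope at $|p|=1$ give a first-order gain in the area term of order $\eps^{\alpha}$ with $\alpha<1$. This beats the $O(\eps)\cdot\|\rho\|_\infty$ loss from the linear term (the boundary term is untouched, since the modification is compactly supported in $\Omega$). We then restore zero mean by subtracting a constant, which does not change $I_{\rho,\psi}$ up to a term that vanishes: indeed $\int\rho+\int\psi$ enters linearly, and I would assume the compatibility \eqref{eq:compatibility} here, or else handle the constant as an $O(\eps)$ correction. The conclusion is that every interior light segment extends to a maximal light segment $\overline{x_0y_0}$ with $x_0,y_0\in\del\Omega$ and interior points in $\Omega$, using convexity.

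\emph{Step 2 (contradiction from a boundary-to-boundary light segment).} Now $u(y_0)-u(x_0)=|y_0-x_0|=:L$, with $x_0,y_0\in\del\Omega$. I would consider the family of competitors obtained by ``cutting the ridge'': $u_\eps=\min\{u,\ \ell_\eps\}$ (or a symmetric $\max$), where $\ell_\eps$ is a function agreeing with $u$ far from the segment but strictly below it by about $\eps$ in a thin neighbourhood of $\overline{x_0y_0}$. Along the segment $|Du|=1$, so the volume density vanishes there. Tilting the graph slightly off the light direction gains area at rate $\sqrt{\eps}$ per unit length, times the length $L$ and the width of the modified region. Meanwhile the losses are $\|\rho\|_\infty$ times the volume changed, which is $O(\eps\cdot\text{width}\cdot L)$, plus $\|\psi\|_\infty$ times the boundary area changed. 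The latter is $O(\eps\cdot\text{width}^{m-1})$ at the two ends, and convexity ensures the neighbourhood meets $\del\Omega$ only near $x_0,y_0$ in a controlled way. Choosing the width as a suitable power of $\eps$, the $\sqrt\eps$ gain dominates, contradicting maximality.

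The main obstacle is Step 2, specifically controlling the boundary contributions and the geometry near $x_0,y_0$ where $\del\Omega$ may be only Lipschitz. To handle this I would first try working in a cylinder-type neighbourhood $\{x: \dist(x,\overline{x_0y_0})<\delta\}\cap\Omega$ and use a perturbation vanishing to first order at the segment's ends, so that the boundary trace change is $O(\eps\delta^{m-1})$. Step 1 is essentially the known Bartnik--Simon lemma, with the zero-mean constraint as a mild extra.
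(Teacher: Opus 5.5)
\textbf{Step 1.} Your Step 1 works as a citation. It is exactly the anti-peeling theorem \cref{thm:antipeeling}, which the paper also just invokes; its proof, like the paper's, is a comparison with the CMC barriers, not the perturbation you sketch. The compatibility worry is also harmless: on zero-mean functions $I_{\rho,\psi}=I_{\rho-c,\psi}$ with $c=\big(\int_\Omega\rho+\int_{\del\Omega}\psi\big)/|\Omega|$.

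\textbf{The gap is in Step 2.} Your balance counts the losses from $\rho$ and $\psi$ but omits the main one, $\int_E\sqrt{1-|Du|^2}\,dx$, the area of the original graph over the set $E$ where you replace $u$. You treat it as zero because $|Du|=1$ on the segment, but the segment is Lebesgue-null. On sets of positive measure you only know the two-cone squeeze $u(y_0)-|z-y_0|\le u(z)\le u(x_0)+|z-x_0|$, and this does not make that term negligible.

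For instance, $u(z)=\tfrac12\big(|z-x_0|+L-|z-y_0|\big)$ is $1$-Lipschitz with $\overline{x_0y_0}$ light. Its area density at distance $h$ from the segment and $t$ from $x_0$ is $\approx h/(2t)$ for $h\ll t\ll L$. A tilted replacement that lifts the end by $\eps$ and rejoins $u$ after a length $\ell$ has density at most $\approx\sqrt{2\eps/\ell}$. Keeping it $1$-Lipschitz across the transverse transition forces, at the level of your scaling, a width $w\gtrsim\sqrt{\eps\ell}$. At that width the density you remove, $h/(2t)$, is already comparable to the gain for $t\sim\ell$ and larger for $t\ll\ell$; integrated in $t$ it wins by a factor $\log(\ell/w)$.

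Your count uses only that $u$ is $1$-Lipschitz with a light segment, so it would have to work for this $u$ too. It does not, and no choice of width as a power of $\eps$ helps, because gain and loss sit at the same critical scale.

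The fix you propose for the boundary term also cannot work. A $1$-Lipschitz competitor on the convex set $\overline\Omega$ with the same values at $x_0,y_0$ still has $\overline{x_0y_0}$ as a light segment and agrees with $u$ there, so there is nothing to tilt. The end must move, and then the boundary flux has to be controlled with a sign, not just by its size.

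\textbf{The missing idea} is how to control the unknown area of $u$. Linearize the concave integrand at a barrier $v$ that solves an exact equation, $\sqrt{1-|Du|^2}\le\sqrt{1-|Dv|^2}-w_vDv\cdot D(u-v)$, and integrate by parts. This trades the area of $u$ on $\{v>u\}$ for $\int(H_v+\rho)(v-u)_+$ and $\int_{\Nu'}(\psi-\psi_v)(v-u)_+$. In the paper this is packaged as the comparison principle \cref{lem:comparison} together with \cref{lem:barrier_is_maximizer}, and it uses the maximality of $u$ on a subdomain, which your raw count never exploits.

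Concretely, the paper takes $o=x_0+re$ on the segment, $\Omega'=\Omega\cap B_r(x_0)$, and the CMC barrier $v=v^-_{o,\Lambda,K}$ with $\Lambda=-\norm{\rho}_{L^\infty}$ and $v(o)=u(o)$.
\begin{itemize}
\item On $\del\Omega\cap B_r(x_0)$, convexity gives $n(z)\cdot(z-o)\ge\dist(o,\del\Omega)>0$. So the barrier's flux $\psi_v=-\tfrac{n\cdot(z-o)}{|z-o|}\big(K|z-o|^{1-m}+\Lambda|z-o|/m\big)$ lies below $-\norm{\psi}_{L^\infty}$ for $K$ large. This is where convexity and boundedness of $\psi$ actually enter.
\item On $\overline\Omega\cap\del B_r(x_0)$ one gets $v\le u$ for $K$ large. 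Near $o$, the cone vertex of $v$ beats the $o(|z-o|)$ remainder in the differentiability of $u$ at $o$, the sphere being tangent to $e^\perp$ there. Away from $o$, $v$ converges to the past light cone $u(o)-|z-o|$, which lies strictly below $u$.
\end{itemize}
Comparison then yields $v\le u$ on $\Omega'$. This fails at $x_0$: $u$ drops by exactly $r$ along $\overline{x_0 o}$, while $|Dv|<1$.
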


Actually we will prove a stronger result that deals also with mixed Dirichlet and capillary boundary conditions, see \cref{thm:stronger_no_light}.

\subsection*{Sketch of the proof of \cref{thm:Rm_main} and open questions}

It will be sufficient to show that the maximizer $u$ is $W^{2,2}$-regular and uniformly space-like, i.e. $|Du| \leq 1 - \theta$. Up to approximating from the interior, we can assume $\Omega$ smooth and strictly convex. Let $\rho_j \to \rho$ be an approximation of $\rho$ by standard mollification, so that $\norm{\rho_j}_{L^\infty(\Omega)} \leq 2\Lambda$. It is not difficult to show that a sequence $u_j$ of solutions to problem \eqref{eq:problem_Rm} with smooth right-hand side $\rho_j$ and $\psi = 0$ would converge uniformly to $u$ (see \cref{lem:converging_of_maximizers} below). The existence of such solutions follows from an a priori pointwise gradient estimate (\cref{thm:capillary_grad_est}) and the classical theory of Lieberman for oblique derivative problems \cite{Lieberman88} \cite{Lieberman2013}. Our gradient estimate degenerates as the $C^1$ norm of $\rho_j$ grows, hence it is not useful in providing uniform space-likeness. This will instead follow from our no-light-segment \cref{thm:capillary_no_light} combined with the \emph{monotonicity formula}
\begin{align}\label{eq:monotonicity_intro}
    C\big(1 - |Du_j(o)|^2\big)^{\alpha/2} \geq \frac{1}{R^m} \int_\Omega \big(1 - |Du_j|^2\big)^{(\alpha + 1)/2} \, dx + \frac{1}{R^{m-2}}\int_\Omega |D^2u_j|^2 \, dx, \qquad \forall o \in \Omega,
\end{align}
which holds for every $R \geq \diam \Omega$, with $C = C(m,R,\Lambda)$ and $\alpha = \alpha(m) < m^{-1}$ (cf. \cref{lem:BS_monotonicity}). This is an improvement of the monotonicity formula \cite[Lemma 2.1]{BartnikSimon1982} of Bartnik and Simon. Estimate \eqref{eq:monotonicity_intro} gives a uniform $W^{2,2}$ estimate along $\set{u_j}$, which in turn shows $u \in W^{2,2}(\Omega)$. Moreover, passing to the limit, it implies that, if the maximizer is light-like at a point $o\in\Omega$, i.e. $|Du|(o) = 1$, then it must be light-like in the whole of $\Omega$, in stark contradiction with \cref{thm:capillary_no_light}. This suffices to show that $u$ must be uniformly space-like and \eqref{eq:energy}-\eqref{eq:Rm_weak_sol} follow.

What keeps us from an existence result on general Lipschitz domains and more general capillary data is the lack of an a priori gradient estimate that generalizes \cref{thm:capillary_grad_est}. Our estimates \cref{lem:BS_monotonicity} and \cref{thm:capillary_grad_est} rely heavily on both the convexity and the free boundary condition. Classical approaches developed for the capillary problem in Euclidean space, such as the integral iteration approach of Simon and Spruck \cite{SimonSpruck} or the geometric one of Korevaar \cite{Korevaar}, work on more general domains and capillary data but, to the best of our understanding, do not adapt to the Lorentzian geometry. Establishing similar gradient bounds seems a challenging problem.

\subsection*{Acknowledgements} The author is grateful to A. Boscaggin for suggesting the problem, to A. Iacopetti and L. Mari for carefully reading the manuscript and to R. Ziegele for helpful discussions.

\section{The variational problem}\label{sec:variational}

\subsection{The setting}

Let $\Omega\subseteq \R^m$ a bounded domain. A function $u:\Omega \to \R$ is called
\begin{itemize}
    \item \emph{weakly space-like} if $u \in W^{1,\infty}(\Omega)$ and $\norm{Du}_{L^\infty(\Omega)}\leq 1$; we let $A(\Omega)$ be the set of such functions\footnote{$A$ stands for \emph{achronal}, a term from Causality Theory};
    \item \emph{space-like} if for every $x,y\in\Omega$ such that $\overline{xy}\subset\overline\Omega$ it holds $|u(x) - u(y)| < |x - y|$; we will denote by $S(\Omega)$ the set of space-like functions;
    \item \emph{strictly space-like} if $u \in C^1(\Omega)$ and $|Du| < 1$ in $\Omega$; 
    \item \emph{uniformly space-like} if $u$ is weakly space-like and its tilt function $w_u \doteq (1 - |Du|^2)^{-1/2}$ is an element of $L^\infty(\Omega)$.
\end{itemize}
The terminology comes from Relativity. Accordingly, we will say that $u$ \emph{goes null} (or that it is \emph{light-like}) where $|Du| = 1$.

If $\varphi\in C(\del\Omega)$ we let $A_\varphi(\Omega)$ and $S_\varphi(\Omega)$ be the sets of weakly space-like or space-like functions whose trace on $\del\Omega$ is $\varphi$. More generally, if $\Gamma$ is a compact subset of $\del\Omega$ and $\varphi\in C(\del\Omega)$, we let
\begin{align*}
    A_\varphi(\Omega;\Gamma) &\doteq \set{u \in A(\Omega) \ | \ u|_\Gamma = \varphi|_\Gamma}, \qquad A_\varphi(\Omega) \doteq A_\varphi(\Omega;\del\Omega), \\
    S_\varphi(\Omega;\Gamma) &\doteq \set{u \in S(\Omega) \ | \ u|_\Gamma = \varphi|_\Gamma}, \qquad S_\varphi(\Omega) \doteq S_\varphi(\Omega;\del\Omega).
\end{align*}
Notice that, if $\Gamma = \varnothing$, then $A_\varphi(\Omega;\Gamma) = A(\Omega)$. The space $A_\varphi(\Omega;\Gamma)$ is not empty provided that $\varphi$ satisfies
\begin{align}\label{eq:hp_varphi}
    |\varphi(x) - \varphi(y)| \leq |x - y| \qquad \qquad \text{$\forall x,y \in \Gamma$ such that $\overline{xy}\subset\overline\Omega$}.
\end{align}
Notice that this condition is empty if $\Gamma = \varnothing$. Finally, we use the symbol $\ \mathring{} \ $ to signify that a set is made of zero-mean functions, for example
\begin{align*}
    \mathring A_\varphi(\Omega;\Gamma) \doteq \bigg\{u \in A_\varphi(\Omega;\Gamma) \ | \ \int_\Omega u \, dx = 0 \bigg\}.
\end{align*}
For given $\rho \in L^2(\Omega)$ and $\varphi\in C(\del\Omega)$ consider the functional
\begin{align*}
    I_\rho: A_\varphi(\Omega;\Gamma) \to \R, \qquad I_\rho(u) \doteq \int_\Omega \sqrt{1 - |Du|^2} + \int_\Omega\rho u.
\end{align*}
The functional $u\mapsto \int_\Omega\sqrt{1 - |Du|^2} \, dx$ is called \emph{area functional}, as it measures the intrinsic area of the graph of $u$ as a hypersurface in $\Mink$. Now take $\psi\in L^2(\Nu)$ and consider the functional
\begin{align*}
    I_{\rho,\psi}: A_\varphi(\Omega;\Gamma) \to \R, \qquad I_{\rho,\psi}(u)\doteq I_\rho(u) + \int_\Nu \psi u.
\end{align*}

\begin{prb}\label{prb:variational_problem}
    Let $\Omega\subseteq\R^m$ be a bounded domain, $\Gamma\subseteq\del\Omega$ a compact set and let $\Nu \doteq \del\Omega\setminus\Gamma$. Let $\varphi\in C(\del\Omega)$ such that \eqref{eq:hp_varphi} holds and fix $\rho \in L^2(\Omega)$ and $\psi \in L^2(\del\Omega)$ such that the compatibility condition \eqref{eq:compatibility} holds true if $\Gamma = \varnothing$. The variational problem we address in this Section is the following
    \begin{align}
    \max_{u \in X(\Omega)}I_{\rho,\psi}(u), \qquad \qquad X(\Omega)\doteq
        \begin{cases}
            A_\varphi(\Omega;\Gamma) & \text{if } \Gamma \neq \varnothing, \\
            \mathring A(\Omega) & \text{if } \Gamma = \varnothing.
        \end{cases}
    \end{align}
\end{prb}

The formal Euler--Lagrange equations of $I_{\rho,\psi}$ are
\begin{align}\label{eq:general_problem}
    - H_u = \rho \quad \text{in $\Omega$}, \qquad u = \varphi \quad \text{on $\Gamma$}, \qquad w_u Du \cdot n = \psi \quad \text{on $\Nu$},
\end{align}
and we say that $u$ is a weak solution to \eqref{eq:general_problem} if $\int_{\Omega'}w_u < \infty$ for every $\Omega' \Subset \overline\Omega\setminus\Gamma$ and \eqref{eq:Rm_weak_sol} holds for every $\eta \in C_c(\overline{\Omega}\setminus \Gamma)$.

\subsection{Properties of the maximizer} Unless stated differently, throughout the Section $\Omega$, $\Gamma$, $\Nu$, $\varphi$ and $\psi$ will be as in \cref{prb:variational_problem}. First of all we establish existence and uniqueness of the maximizer of $I_{\rho,\psi}$. 

% First observe that by the concavity of $p \mapsto \sqrt{1 - |p|^2}$ we deduce the following

% \begin{lem}\label{lem:area_semicontinuous}
%     The area functional $I_{0,0}:A(\Omega) \to \R$ is strictly convex and upper semicontinuous with respect to the uniform convergence in $\Omega$.
% \end{lem}

% The example of a highly oscillating, nearly light-like, sequence of function converging uniformly to $0$ shows that the area functional is not continuous.

\begin{lem}\label{lem:existence_maximizer}
    \cref{prb:variational_problem} has a unique solution $u\in X(\Omega)$.
\end{lem}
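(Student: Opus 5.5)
We argue by the direct method in the calculus of variations, using compactness of $X(\Omega)$ in the uniform topology and upper semicontinuity of $I_{\rho,\psi}$. Throughout we assume, as is implicit in the setting, that $\Omega$ is a bounded Lipschitz domain, so that traces are well defined.

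\textbf{Step 1: $X(\Omega)$ is nonempty, bounded and compact.} Nonemptiness follows from \eqref{eq:hp_varphi}. Since $\Omega$ is Lipschitz, a McShane-type extension of $\varphi|_\Gamma$ using the intrinsic distance of $\overline\Omega$ gives a function with $\norm{Du}_{L^\infty}\le 1$; when $\Gamma=\varnothing$ the function $u\equiv 0$ belongs to $\mathring A(\Omega)$.

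Functions in $X(\Omega)$ are $1$-Lipschitz along segments in $\overline\Omega$, hence uniformly Lipschitz for the intrinsic distance, which is comparable to the Euclidean one on a bounded Lipschitz domain. They are also uniformly bounded. If $\Gamma\ne\varnothing$, this holds because $|u(x)|\le \max_\Gamma|\varphi| + C\diam\Omega$. If $\Gamma=\varnothing$, the zero-mean condition forces $u$ to vanish somewhere, which gives the same kind of bound.

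By Arzelà--Ascoli, $X(\Omega)$ is therefore compact in $C(\overline\Omega)$. The constraints pass to uniform limits: the Lipschitz bound $\norm{Du}_\infty\le1$ is preserved by weak-$*$ convergence in $W^{1,\infty}$, and so are the boundary values on $\Gamma$ and the zero mean. The same bounds also show that $I_{\rho,\psi}$ is bounded above on $X(\Omega)$.

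\textbf{Step 2: upper semicontinuity.} Take a maximizing sequence $u_k\to u$ uniformly, so that $Du_k\rightharpoonup Du$ weak-$*$ in $L^\infty$. The linear terms $\int_\Omega\rho u_k$ and $\int_\Nu\psi u_k$ converge, because $u_k\to u$ uniformly on $\overline\Omega$ and $\rho\in L^2(\Omega)$, $\psi\in L^2(\del\Omega)$.

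The area term $u\mapsto\int_\Omega\sqrt{1-|Du|^2}$ is the integral of a concave, continuous function of $Du$ on the closed unit ball. Hence it is weakly upper semicontinuous: concave integrands give weakly upper semicontinuous integrals (Mazur's lemma, or Jensen/Ioffe). It follows that $I_{\rho,\psi}(u)\ge\limsup_k I_{\rho,\psi}(u_k)=\sup_{X(\Omega)} I_{\rho,\psi}$, so $u$ is a maximizer.

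\textbf{Step 3: uniqueness.} This is the only delicate point, because $p\mapsto\sqrt{1-|p|^2}$ is strictly concave on the closed unit ball but not uniformly so near the sphere; we need to check that strictness still holds there. Let $u,v$ be two maximizers. Since $X(\Omega)$ is convex, $(u+v)/2\in X(\Omega)$. The linear part of $I_{\rho,\psi}$ is affine, so
\begin{align*}
I_{\rho,\psi}\big(\tfrac{u+v}{2}\big)-\tfrac12 I_{\rho,\psi}(u)-\tfrac12 I_{\rho,\psi}(v)=\int_\Omega\Big[F\big(\tfrac{Du+Dv}{2}\big)-\tfrac12F(Du)-\tfrac12F(Dv)\Big]\ge0,
\end{align*}
where $F(p)=\sqrt{1-|p|^2}$. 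By maximality of $u$ and $v$ the left-hand side is $\le 0$, so the nonnegative integrand vanishes almost everywhere.

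We now check that $F$ is strictly concave on the whole closed ball, including its boundary. The graph of $F$ is the upper unit hemisphere, and any chord of the closed hemisphere lies strictly below it unless its endpoints coincide. Concretely, if $F(\frac{p+q}2)=\frac12(F(p)+F(q))$, then the points $(p,F(p))$ and $(q,F(q))$ have unit norm in $\R^{m+1}$ and so does their midpoint. By strict convexity of the Euclidean norm, this forces $p=q$.

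Consequently $Du=Dv$ almost everywhere, so $u-v$ is constant. When $\Gamma\ne\varnothing$, the boundary condition $u=v=\varphi$ on $\Gamma$ fixes the constant to be $0$. When $\Gamma=\varnothing$, the zero-mean condition does the same. Hence $u=v$.
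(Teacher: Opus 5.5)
Your proof is correct and follows the paper's argument essentially step by step. The shared steps are: Arzel\`a--Ascoli compactness of $X(\Omega)$ in $C(\overline\Omega)$, upper semicontinuity of the area term by concavity along a uniformly convergent maximizing sequence, and uniqueness from strict concavity of $p\mapsto\sqrt{1-|p|^2}$ together with the normalization on $\Gamma$ or the zero mean. You also make explicit what the paper leaves terse: Mazur's lemma for the semicontinuity, strict concavity up to $|p|=1$ via the hemisphere argument, and comparability of the intrinsic distance where the paper tacitly bounds by $\diam\Omega$.

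The only caveat concerns nonemptiness, and it is inherited from the paper's own setup. Condition \eqref{eq:hp_varphi} only controls pairs of points of $\Gamma$ joined by a segment in $\overline\Omega$. Your McShane extension with the intrinsic distance is therefore justified when $\Omega$ is convex, which is the case where the lemma is actually applied, but not for general Lipschitz domains.
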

\begin{proof}
    If $\Gamma\neq\varnothing$, fix $x_0\in\Gamma$; every $u\in X(\Omega)$ satisfies $u(x_0)=\varphi(x_0)$, so
    \begin{align*}
        |u(x)| \leq |u(x)-u(x_0)| + |\varphi(x_0)| \leq \de(x,x_0) + |\varphi(x_0)| \leq \diam\Omega + \norm{\varphi}_{C(\del\Omega)}, \quad x\in\overline\Omega.
    \end{align*}
    If $\Gamma=\varnothing$, take $u\in\mathring A(\Omega)$. Since it has zero mean, it must vanish at some point $x_0 \in \Omega$, and since it is $1$-Lipschitz we must have $|u(x)| = |u(x) - u(x_o)| \leq |x - x_0| \leq \diam\Omega$. In either case $X(\Omega)$ is equibounded in $C(\overline\Omega)$. By space-likeness, it is also equicontinuous, hence precompact in $C(\overline\Omega)$ by Ascoli--Arzelà. It is also closed: uniform limits of $1$-Lipschitz functions are $1$-Lipschitz, traces pass to the limit under uniform convergence, and $u\mapsto\int_\Omega u$ is continuous in $C(\overline\Omega)$. Thus $X(\Omega)$ is compact in $C(\overline\Omega)$.
    % Notice also that $X(\Omega)$ is also convex and non empty, as it always contains
    % \begin{align*}
    %     u_0(x) \doteq \inf_{y\in\Gamma} (\varphi(y) + |x - y|), \qquad u_0 = 0 \text{ if $\Gamma=\varnothing$}.
    % \end{align*}
    Using the uniform bound on $\norm{u}_{L^\infty(\Omega)}$ obtained above and the boundedness of $\Omega$, the functional satisfies
    \begin{align*}
        I_{\rho,\psi}(u) \leq |\Omega| + \norm{\rho}_{L^2(\Omega)}\norm{u}_{L^2(\Omega)} + \norm{\psi}_{L^2(\Nu)}\norm{u}_{L^2(\Nu)},
    \end{align*}
    thus, it is bounded above on $X(\Omega)$. %since $\norm{u}_{L^1(\Omega)}\le |\Omega|\norm{u}_{L^\infty(\Omega)}$ and $\norm{u}_{L^2(\Nu)}\le \mathscr H^{m-1}(\Nu)^{1/2}\norm{u}_{L^\infty(\Omega)}$.

    Let $\{u_j\}\subset X(\Omega)$ be a maximizing sequence. By compactness, up to a subsequence $u_j\to u$ in $C(\overline\Omega)$ for some $u\in X(\Omega)$. Traces converge in $L^2(\Nu)$ by continuity of the trace map under uniform convergence. Since the map $p \mapsto \sqrt{1 - |p|^2}$ is strictly concave, the Lorentzian area is strictly concave as well, and thus, upper-semicontinuous,
    \begin{align*}
        \sup_{X(\Omega)} I_{\rho,\psi} = \lim_{j\to\infty} I_{\rho,\psi}(u_j) \leq \int_\Omega\sqrt{1-|Du|^2} + \int_\Omega u\rho + \int_\Nu u\psi = I_{\rho,\psi}(u),
    \end{align*}
    hence, $u$ is a maximum point.

    By strict concavity, two maximizers must have the same gradient a.e. on $\Omega$, hence must differ by a constant. By definition of $X(\Omega)$, this implies that the two are actually equal.
    % Now let $u,v\in X(\Omega)$ both be maximizers. Since $X(\Omega)$ is convex, $h\doteq\tfrac12(u+v)\in X(\Omega)$. If $Du\neq Dv$ on a set of positive measure, strict concavity of $p\mapsto\sqrt{1-|p|^2}$ and linearity of the remaining terms give
    % \begin{align*}
    %     \sup_{X(\Omega)} I_{\rho,\psi} = \tfrac12 I_{\rho,\psi}(u) + \tfrac12 I_{\rho,\psi}(v) < I_{\rho,\psi}(h),
    % \end{align*}
    % a contradiction. Hence $Du=Dv$ a.e.\ in $\Omega$, and since $\Omega$ is connected, $u-v\equiv c\in\R$. If $\Gamma\neq\varnothing$, $c=0$ since $u=v=\varphi$ on $\Gamma$; if $\Gamma=\varnothing$, $c=0$ since $\int_\Omega u=\int_\Omega v=0$. In both cases $u=v$.
\end{proof}

\begin{rmk}\label{rmk:solutions_are_max}
    Notice that every weak solution to \eqref{eq:general_problem} is a solution of the variational \cref{prb:variational_problem}. This follows from the fact that being a weak solution is equivalent to $I_{\rho,\psi}$ being differentiable at $u$ with $I_{\rho,\psi}'(u) = 0$ and by the strict concavity of $I_{\rho,\psi}$.
\end{rmk}

Next we observe that the property of being a maximizer is stable under restrictions of the domain.

\begin{lem}\label{lem:local_minimizer}
    Let $u$ be the solution of \cref{prb:variational_problem} and fix a subdomain $\Omega'\subset \Omega$.
    % assume that $\rho \in L^2(\Omega)$, $\psi\in L^2(\del\Omega)$ are compatible and let $u$ be the maximizer of $I_{\rho,\psi}$ in $\mathring A(\Omega)$.
    Then $u|_{\Omega'}$ is the unique maximizer of $I_{\rho,\psi}$ in $A_u(\Omega',\overline\Omega\cap \del\Omega')$.
\end{lem}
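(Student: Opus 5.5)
We argue by contradiction through a gluing construction. In the restricted problem on $\Omega'$ the Dirichlet set is $\Gamma' \doteq \overline\Omega\cap\del\Omega'$ with datum $u|_{\Gamma'}$, and the Neumann part is $\Nu' = \del\Omega'\setminus\Gamma' \subseteq \del\Omega$. Here $I_{\rho,\psi}$ on $\Omega'$ means $\int_{\Omega'}\big(\sqrt{1-|Dv|^2}+\rho v\big) + \int_{\Nu'}\psi v$. The datum satisfies \eqref{eq:hp_varphi}, since it is the trace of the $1$-Lipschitz (along segments in $\overline\Omega$) function $u$.

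First, if $\Gamma' \neq \varnothing$, then \cref{lem:existence_maximizer}, applied on $\Omega'$ (assuming $\Omega'$ is a bounded domain, Lipschitz if traces on $\Nu'$ are needed), gives a unique maximizer $v$ in $A_u(\Omega';\Gamma')$. If $\Gamma' = \varnothing$, then $\Omega'$ is a component of $\Omega$ with boundary contained in $\del\Omega$, so $\Omega' = \Omega$ by connectedness and there is nothing to prove. Since $u|_{\Omega'}$ is a competitor, it suffices to show $I_{\rho,\psi}(v;\Omega') \le I_{\rho,\psi}(u;\Omega')$.

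Second, define $\tilde u \doteq v$ on $\Omega'$ and $\tilde u \doteq u$ on $\Omega\setminus\Omega'$. I will check that $\tilde u \in A(\Omega)$. Since $v = u$ on $\Gamma' \supseteq \Omega\cap\del\Omega'$, the glued function is continuous and lies in $W^{1,\infty}$, with $D\tilde u = Dv$ a.e. on $\Omega'$ and $D\tilde u = Du$ a.e. on $\Omega\setminus\overline{\Omega'}$. The interface $\Omega\cap\del\Omega'$ requires care: its measure may need to be null, or else one must argue that $\tilde u$ is locally Lipschitz across it by pasting along segments. Gluing continuous $W^{1,\infty}$ functions that agree on the interface gives $|D\tilde u|\le 1$ a.e., so $\tilde u \in A(\Omega)$, and it also agrees with $\varphi$ on $\Gamma$. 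The boundary term splits as $\int_{\Nu}\psi\tilde u = \int_{\Nu\cap\del\Omega'}\psi v + \int_{\Nu\setminus\del\Omega'}\psi u$. The portion of $\Nu$ lying in $\del\Omega'$ is exactly $\Nu'$ up to the Dirichlet piece, where $v = u$ anyway. Hence
\begin{align*}
I_{\rho,\psi}(\tilde u) - I_{\rho,\psi}(u) = I_{\rho,\psi}(v;\Omega') - I_{\rho,\psi}(u;\Omega').
\end{align*}

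Third, when $\Gamma\neq\varnothing$, we have $\tilde u\in X(\Omega)$, so maximality of $u$ gives the claim. When $\Gamma=\varnothing$, the function $\tilde u$ may fail to have zero mean, so I replace it by $\tilde u - c$ with $c$ its mean. This shift changes $I_{\rho,\psi}$ by $-c\big(\int_\Omega\rho+\int_{\del\Omega}\psi\big) = 0$ by the compatibility condition \eqref{eq:compatibility}, and the gradient is unchanged. Thus $I_{\rho,\psi}(\tilde u - c) = I_{\rho,\psi}(\tilde u) \le I_{\rho,\psi}(u)$, and so $I(v;\Omega')\le I(u;\Omega')$. Combined with the uniqueness of $v$, this gives $u|_{\Omega'} = v$.

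The main obstacle is the second step. One must verify that the pasted function is genuinely weakly space-like and that the traces and boundary integrals split correctly when $\Omega'$ is an arbitrary subdomain. I expect to handle this with the continuity of $\tilde u$, using that $\Omega\cap\del\Omega'\subseteq\Gamma'$, together with the standard fact that pasting Lipschitz functions along a closed set where they coincide preserves the a.e. gradient bound.
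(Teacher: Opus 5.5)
Your proposal is correct and is essentially the paper's argument: paste a competitor on $\Omega'$ onto $u$ on $\Omega\setminus\Omega'$, restore the zero mean by a constant shift, and contradict the maximality of $u$; the shift does not change $I_{\rho,\psi}$ by the compatibility condition \eqref{eq:compatibility}. The differences are cosmetic: you compare with the maximizer on $\Omega'$ and use its uniqueness rather than an arbitrary strictly better competitor, and you treat $\Gamma\neq\varnothing$ explicitly (no shift needed), which the paper's proof, written for $\mathring A(\Omega)$, leaves implicit; your handling of the pasting step is also more careful than the paper's.
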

\begin{proof}
    Set $\Gamma'\doteq(\Gamma \cup {\Omega})\cap \del\Omega'$, $\Nu' = \del\Omega' \setminus \Gamma'\subseteq\Nu$ and let us use the shortcuts $I = I_{\rho,\psi}$, $I' = I|_{A_u(\Omega',\Gamma')}$ and $u' = u|_{\Omega'}$. Suppose there were $v' \in A_u(\Omega',\Gamma')$ such that $I'(v')> I'(u')$. Define 
    \begin{align*}
        v(x) = 
        \begin{cases}
            v'(x) + c &\quad \text{on $\Omega'$} \\
            u(x) + c &\quad \text{on $\Omega\setminus\Omega'$}
        \end{cases}
    \end{align*}
    where $c=\frac{1}{|\Omega|}\int_{\Omega'}(u - v')$ is chosen so that $v\in \mathring A(\Omega)$. Then by the compatibility condition \eqref{eq:compatibility} and the maximality of $v'$ we have
    \begin{align*}
        I(v) &= I'(v') + \int_{\Omega\setminus\Omega'} \sqrt{1 - |Du|^2} + \int_{\Omega\setminus\Omega'}u\rho + \int_{\Nu \setminus\Nu'} u\psi + c\left( \int_\Omega \rho + \int_{\del\Omega}\psi\right) \\
        &> I'(u') + \int_{\Omega\setminus\Omega'} \sqrt{1 - |Du|^2} + \int_{\Omega\setminus\Omega'}u\rho + \int_{\Nu \setminus\Nu'} u\psi = I(u),
    \end{align*}
    a contradiction with $u$ being a maximizer of $I$. It follows that $I'(u')\geq I'(v')$ for each $v'\in A_u(\Omega',\Gamma')$.
\end{proof}

We establish a comparison principle for maximizers.

\begin{lem}\label{lem:comparison}
    Fix $\varphi_1,\varphi_2\in C(\del\Omega)$ such that \eqref{eq:hp_varphi} holds true and let $X_i(\Omega)$ as in \cref{prb:variational_problem}. For $\psi_1,\psi_2\in L^\infty(\Nu)$ and $\rho_1,\rho_2\in L^\infty(\Omega)$ consider
    \begin{align*}
        I_{\rho_i,\psi_i}: X_i(\Omega) \to \R, \qquad I_{\rho_i,\psi_i}(v) \doteq \int_\Omega\sqrt{1-|Dv|^2} + \int_\Omega v\rho_i + \int_\Nu v\psi_i.
    \end{align*}
    If $u_i$ is the maximizer of $I_{\rho_i,\psi_i}$ in $X_i(\Omega)$, then
    \begin{align}\label{eq:comparison}
        \rho_2\leq\rho_1, \ \psi_2\leq\psi_1 \ \text{a.e.} \quad\Longrightarrow\quad u_2 - u_1 \leq \sup_\Gamma(\varphi_2-\varphi_1)
    \end{align}
    where the supremum over $\Gamma$ is set to zero in case $\Gamma = \varnothing$.
\end{lem}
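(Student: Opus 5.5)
Set $k\doteq\sup_\Gamma(\varphi_2-\varphi_1)$ (with $k=0$ if $\Gamma=\varnothing$). We use the standard min/max exchange argument, which works without any Euler--Lagrange equation because the area functional is local in $Dv$. Define
\begin{align*}
    v_1 \doteq \max(u_1, u_2 - k), \qquad v_2 \doteq \min(u_2, u_1 + k).
\end{align*}
On the set $E\doteq\set{u_2 - k > u_1}$ the gradients are exchanged, up to the constant shift. Elsewhere, and a.e.\ on the level set, they are unchanged. Both functions are $1$-Lipschitz, since the max or min of $1$-Lipschitz functions is $1$-Lipschitz. On $\Gamma$ we have $u_2 - k = \varphi_2 - k\leq\varphi_1 = u_1$, so $v_1=\varphi_1$ and $v_2=\varphi_2$ there. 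Hence $v_i$ has the correct boundary values whenever $\Gamma\neq\varnothing$.

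Assume first $\Gamma\neq\varnothing$. Then $v_i\in X_i(\Omega)$, and
\begin{align*}
    I_{\rho_1,\psi_1}(v_1)-I_{\rho_1,\psi_1}(u_1) &= \int_E\Big(\sqrt{1-|Du_2|^2}-\sqrt{1-|Du_1|^2}\Big) + \int_E (u_2-k-u_1)\rho_1 + \int_{E\cap \Nu}(u_2-k-u_1)\psi_1,\\
    I_{\rho_2,\psi_2}(v_2)-I_{\rho_2,\psi_2}(u_2) &= \int_E\Big(\sqrt{1-|Du_1|^2}-\sqrt{1-|Du_2|^2}\Big) - \int_E (u_2-k-u_1)\rho_2 - \int_{E\cap \Nu}(u_2-k-u_1)\psi_2,
\end{align*}
where $E\cap\Nu$ refers to boundary points at which the continuous extensions satisfy the strict inequality. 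Summing, the area terms cancel and we obtain
\begin{align*}
    \int_E (u_2-k-u_1)(\rho_1-\rho_2)+\int_{E\cap\Nu}(u_2-k-u_1)(\psi_1-\psi_2)\geq0.
\end{align*}
Each $u_i$ is a maximizer, so each of the two differences is $\leq0$. Since their sum is $\geq 0$, both vanish. Therefore $v_1$ is also a maximizer of $I_{\rho_1,\psi_1}$, and by the uniqueness in \cref{lem:existence_maximizer} we get $v_1=u_1$. Thus $u_2-k\leq u_1$, which is the claim.

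The main obstacle is the case $\Gamma=\varnothing$, where $v_i$ need not have zero mean. Here $k=0$, and we correct by constants: set $\tilde v_i\doteq v_i - \frac{1}{|\Omega|}\int_\Omega v_i$. By the compatibility condition \eqref{eq:compatibility} for each pair $(\rho_i,\psi_i)$, $I_{\rho_i,\psi_i}$ is invariant under adding constants, so $I_{\rho_i,\psi_i}(\tilde v_i)=I_{\rho_i,\psi_i}(v_i)$. The computation above then shows that $\tilde v_1$ maximizes, and uniqueness gives $v_1 = u_1 + c$ for some constant $c\geq0$. It remains to exclude the case where $E$ has positive measure. We first translate $u_2$ by a constant; this does not change the ordering question, because $u_2$ is only determined up to constants once the zero-mean normalisation is dropped. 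Concretely, $v_1=u_1+c$ says that on the open set $E$ we have $u_2 = u_1 + c$ and $c>0$, while $u_2\leq u_1$ off $E$. So $E=\set{u_2=u_1+c}$ is both relatively closed and open in the connected set $\Omega$. Hence either $E=\varnothing$, or $E=\Omega$, which contradicts both functions having zero mean since $c>0$. We conclude $u_2\leq u_1$.

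A delicate point in this last step is the boundary integral over $E\cap\Nu$, which we handle using continuity of $u_i$ up to $\overline\Omega$.
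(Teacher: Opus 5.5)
Your proof is correct and uses the same min/max exchange argument as the paper's: your $v_1$ is the paper's $u_+$, your $v_2=\min(u_2,u_1+k)$ is the paper's $u_-$ shifted by $k$, and summing, maximality and uniqueness then conclude exactly as in the paper. For $\Gamma=\varnothing$, your renormalisation by means (via compatibility), followed by the observation that $\max(u_1,u_2)=u_1+c$ forces $c=0$, supplies a step the paper skips with ``verbatim'' (since $\max(u_1,u_2)$ need not have zero mean); the aside about translating $u_2$ by a constant is never used and should be deleted.
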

\begin{proof}
    Set $s\doteq\sup_\Gamma(\varphi_2-\varphi_1)$, well-defined and finite since $\varphi_1,\varphi_2\in C(\Gamma)$ and $\Gamma$ is compact. Let $s = 0$ if $\Gamma = \varnothing$. Write $I_i \doteq I_{\rho_i,\psi_i}$. Assume first that $\Gamma \neq \varnothing$.

    Set $\tilde u_2\doteq u_2-s$, so that $\tilde u_2\in A_{\varphi_2-s}(\Omega;\Gamma)$. For $v\in A_{\varphi_2-s}(\Omega;\Gamma)$ we have $v+s\in A_{\varphi_2}(\Omega;\Gamma)$, hence the maximality of $u_2$ over $A_{\varphi_2}(\Omega;\Gamma)$ gives
    \begin{align*}
        I_2(v) = I_2(v+s) - s\Big(\int_\Omega\rho_2+\int_\Nu\psi_2\Big) \leq I_2(u_2) - s\Big(\int_\Omega\rho_2+\int_\Nu\psi_2\Big) = I_2(\tilde u_2)
    \end{align*}
    for every $v\in A_{\varphi_2-s}(\Omega;\Gamma)$, i.e.\ $\tilde u_2$ maximizes $I_2$ over $A_{\varphi_2-s}(\Omega;\Gamma)$.

    By definition of $s$, $\varphi_2-s\leq\varphi_1$ pointwise on $\Gamma$, i.e.\ $\tilde u_2\leq u_1$ on $\Gamma$. Set
    \begin{align*}
        u_+\doteq\max(u_1,\tilde u_2), \qquad u_-\doteq\min(u_1,\tilde u_2).
    \end{align*}
    Since $u_1,\tilde u_2\in W^{1,\infty}(\Omega)$, so are $u_+,u_-$, and by Stampacchia's theorem $Du_1=D\tilde u_2$ a.e.\ on $E\doteq\{u_1=\tilde u_2\}$; writing $E_+\doteq\{u_1>\tilde u_2\}$, $E_-\doteq\{\tilde u_2>u_1\}$, we get a.e.\ in $\Omega$
    \begin{align*}
        Du_+ = Du_1\,\mathbb 1_{E_+} + D\tilde u_2\,\mathbb 1_{E_-} + Du_1\,\mathbb 1_E, \qquad Du_- = D\tilde u_2\,\mathbb 1_{E_+} + Du_1\,\mathbb 1_{E_-} + Du_1\,\mathbb 1_E,
    \end{align*}
    so $\norm{Du_+}_{L^\infty(\Omega)},\norm{Du_-}_{L^\infty(\Omega)}\leq1$. Moreover $u_+ \in A_{\varphi_1}(\Omega;\Gamma)$ and $u_-\in A_{\varphi_2-s}(\Omega;\Gamma)$. From the decomposition of the gradients above, the area part of the functional is preserved, i.e.
    \begin{align*}
        I_{0,0}(u_+) + I_{0,0}(u_-) = I_{0,0}(u_1) + I_{0,0}(u_2)
        % \int_\Omega\sqrt{1-|Du_+|^2} + \int_\Omega\sqrt{1-|Du_-|^2} &= \int_{E_+}\big(\sqrt{1-|Du_1|^2}+\sqrt{1-|D\tilde u_2|^2}\big) \ + \\ + \int_{E_-}\big(\sqrt{1-|D\tilde u_2|^2}+\sqrt{1-|Du_1|^2}&\big) 
        %  + \int_E\big(\sqrt{1-|Du_1|^2}+\sqrt{1-|D\tilde u_2|^2}\big) \\
        % &= \int_\Omega\sqrt{1-|Du_1|^2} + \int_\Omega\sqrt{1-|D\tilde u_2|^2}.
    \end{align*}
    For any $p,q\in L^\infty(\Omega)$ and any measurable $a,b$, a direct check on $E_+,E_-,E$ gives the pointwise identity
    \begin{align*}
        \big(\max(a,b)\,p+\min(a,b)\,q\big) - (ap+bq) = (b-a)(p-q)\,\mathbb 1_{\{b>a\}} \qquad \text{a.e.}
    \end{align*}
    Applying this with $a=u_1,b=\tilde u_2$, $p=\rho_1,q=\rho_2$ on $\Omega$, and $p=\psi_1,q=\psi_2$ on $\Nu$, and using $\rho_1-\rho_2\geq0$, $\psi_1-\psi_2\geq0$ together with $\tilde u_2-u_1>0$ on $E_-$:
    \begin{align*}
        \int_\Omega(u_+\rho_1+u_-\rho_2) - \int_\Omega(u_1\rho_1+\tilde u_2\rho_2) &= \int_{E_-}(\tilde u_2-u_1)(\rho_1-\rho_2) \geq 0, \\
        \int_\Nu(u_+\psi_1+u_-\psi_2) - \int_\Nu(u_1\psi_1+\tilde u_2\psi_2) &= \int_{E_-\cap\Nu}(\tilde u_2-u_1)(\psi_1-\psi_2) \geq 0.
    \end{align*}
    Adding the area identity and the two inequalities above,
    \begin{align*}
        I_1(u_+) + I_2(u_-) \geq I_1(u_1) + I_2(\tilde u_2).
    \end{align*}
    On the other hand the maximality of $u_1$ and of $\tilde u_2$ give the reversed inequality, hence the above is actually an equality. In particular $I_1(u_+) = I_1(u_1)$ and by uniqueness of the maximizer it must be $u_+ = u_1$, that is $\tilde u_2 \leq u_1$, which is the claim.

    In case $\Gamma = \varnothing$ the proof works verbatim with $s = 0$ and $\mathring A(\Omega)$ in place of $A_{\varphi}(\Omega;\Gamma)$.
\end{proof}

Finally, we observe that the maximizers of converging data converge to the maximizer of the limit functional.

\begin{lem}\label{lem:converging_of_maximizers}
    Let $\Omega \subseteq \R^m$ be a bounded, connected domain and let $\Omega_j\nearrow \Omega$ an exhaustion of $\Omega$. Let $\rho \in L^\infty(\Omega)$ and $\rho_j \in L^\infty(\Omega_j)$ satisfy
    \begin{align}\label{eq:conv_max_hyp}
        \int_{\Omega_j} \rho_j \, dx = 0, \qquad
        \sup_j \, \norm{\rho_j}_{L^\infty(\Omega_j)} < \infty, \qquad
        \1_{\Omega_j}\rho_j \rightharpoonup \rho
        \quad \text{weakly in $L^1(\Omega)$.}
    \end{align}
    For each $j$, let $u_j$ be the maximizer of $I_{\rho_j,0}$ in $\mathring A(\Omega_j)$, and assume that $u_j \to u$ locally uniformly in $\Omega$ for some $u : \Omega \to \R$. Then $u \in \mathring A(\Omega)$ and $u$ is the maximizer of $I_{\rho,0}$ in $\mathring A(\Omega)$.
\end{lem}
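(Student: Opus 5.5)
First I check that $u$ is an admissible competitor. On each compact $K\subset\Omega$ the functions $u_j$ are eventually defined and $1$-Lipschitz on segments contained in $K$, and they converge uniformly on $K$. Hence $u$ is locally $1$-Lipschitz, so $u\in W^{1,\infty}(\Omega)$ with $|Du|\le1$ a.e. As in the proof of \cref{lem:existence_maximizer}, each $u_j$ vanishes somewhere, so $|u_j|\le\diam\Omega$. Dominated convergence, together with $|\Omega\setminus\Omega_j|\to0$, then gives $\int_\Omega u=\lim_j\int_{\Omega_j}u_j=0$. Thus $u\in\mathring A(\Omega)$.

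Next I pass to the limit in the functional. I split $I_{\rho_j,0}(u_j)$ into its area part and its linear part. The linear part is $\int_{\Omega_j}\rho_ju_j=\int_\Omega \1_{\Omega_j}\rho_j\,u+\int_{\Omega_j}\rho_j(u_j-u)$. The first term tends to $\int_\Omega\rho u$ by the weak $L^1$ convergence, since $u\in L^\infty$. For the second term I fix $K\Subset\Omega$ and use uniform convergence on $K$; the remainder is bounded by $\sup_j\norm{\rho_j}_\infty\,2\diam\Omega\,|\Omega\setminus K|$, which is small. For the area part I use upper semicontinuity, exactly as in \cref{lem:existence_maximizer}. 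The uniform convergence on $K$ implies $Du_j\rightharpoonup^* Du$ in $L^\infty(K)$, and the area functional is concave. This gives $\limsup_j\int_{K}\sqrt{1-|Du_j|^2}\le\int_K\sqrt{1-|Du|^2}$, and the part over $\Omega_j\setminus K$ is at most $|\Omega\setminus K|$. Hence $\limsup_j I_{\rho_j,0}(u_j)\le I_{\rho,0}(u)$.

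To conclude, I take any $v\in\mathring A(\Omega)$ and set $v_j\doteq v|_{\Omega_j}-\frac{1}{|\Omega_j|}\int_{\Omega_j}v$, which lies in $\mathring A(\Omega_j)$. Because $\int_{\Omega_j}\rho_j=0$, the constant shift does not change the value of the functional, so $I_{\rho_j,0}(v_j)=\int_{\Omega_j}\sqrt{1-|Dv|^2}+\int_{\Omega_j}\rho_jv$. This converges to $I_{\rho,0}(v)$: the area integrand is bounded, and the linear term converges by the weak convergence since $v\in L^\infty$. Maximality of $u_j$ gives $I_{\rho_j,0}(u_j)\ge I_{\rho_j,0}(v_j)$. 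Passing to the limit, $I_{\rho,0}(u)\ge I_{\rho,0}(v)$. Therefore $u$ is a maximizer, and it is the maximizer by the uniqueness in \cref{lem:existence_maximizer}.

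One point deserves care. The compatibility condition $\int_\Omega\rho=0$ is needed for \cref{prb:variational_problem}, and it follows by testing the weak convergence with the constant $1$. The main technical step is the upper semicontinuity of the area on varying domains. I handle it by localizing to a fixed $K$ and appealing to the weak-$*$ lower semicontinuity of the convex integrand $-\sqrt{1-|p|^2}$.
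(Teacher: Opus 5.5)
Your proposal is correct and follows the paper's own argument. You test the maximality of $u_j$ against $v|_{\Omega_j}-\frac{1}{|\Omega_j|}\int_{\Omega_j}v$, use $\int_{\Omega_j}\rho_j=0$ to discard the constant, and pass to the limit using the upper semicontinuity of the area together with the weak $L^1$ convergence of $\1_{\Omega_j}\rho_j$; you simply make explicit, through the localization to $K\Subset\Omega$, the semicontinuity on varying domains that the paper invokes in one line.

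One implicit point, which the paper shares, is the bound $|u_j|\le\diam\Omega$. It relies on $u_j$ being $1$-Lipschitz along segments, which is guaranteed by convexity of the $\Omega_j$, as in the application in \cref{subsect:Rm_proof}. For a general bounded domain, this uniform $L^\infty$ bound, which you need both for $u\in W^{1,\infty}(\Omega)$ and for dominated convergence, would have to be obtained some other way, for instance from a uniform bound on the intrinsic diameters.
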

\begin{proof}
    It is easily verified that $u \in \mathring A(\Omega)$. Let $v \in \mathring A(\Omega)$ be any competitor and set
    \begin{align*}
        v_j \doteq v|_{\Omega_j} - \fint_{\Omega_j} v \, dx
        \ \in\ \mathring A(\Omega_j).
    \end{align*}
    By the maximality of $u_j$ and the zero-mean condition $\int_{\Omega_j}\rho_j = 0$,
    \begin{align*}
        I_{\rho_j,0}(u_j)
        \ \geq\ I_{\rho_j,0}(v_j)
        \ =\ \int_{\Omega_j} \sqrt{1 - |Dv|^2} \, dx
        + \int_{\Omega_j} v \rho_j \, dx.
    \end{align*}
    Using the upper semicontinuity of the functional and \eqref{eq:conv_max_hyp} yields $I_{\rho,0}(u) \geq I_{\rho,0}(v)$.
\end{proof}

\subsection{No-light-segment theorems}

Recall that $u \in A(\Omega)$ is said to have a \emph{light segment}
% on $\overline{xy}\doteq \set{sx + (1-s)y \ | \ s\in[0,1]}$
if $\overline{xy}\subseteq\overline\Omega$ and
\begin{align*}
    u((1 - s)x + sy) = u(x) + s|y - x|, \qquad \forall s \in [0,1]
\end{align*}
or, equivalently, $u(y) - u(x) = |y - x|$. Notice that according to our definition the roles of $x$ and $y$ are not symmetric. We call $\overline{xy}$ a \emph{maximal light segment} if it is not included in any other light segment.
% More precisely $\overline{xy}$ is a maximal light segment if it is a light segment and the following hold true
% \begin{align*}
%     u((1 - s)x + sy) > u(x) + s|y - x| &\qquad \forall s \in \set{t < 0 \ | \ tx + (1 - t)y \in\overline{\Omega}} \\
%     u((1 - s)x + sy) < u(x) + s|y - x| &\qquad \forall s \in \set{t > 1 \ | \ tx + (1 - t)y \in \overline{\Omega}}.
% \end{align*}

\begin{rmk}\label{rmk:diff_light_segment}
    If $\overline{xy}$ is a light segment for $u \in A(\Omega)$, then $u$ is 
    differentiable on $\overline{xy} \setminus \set{x,y}$ with constant gradient
    \begin{align}\label{eq:grad_light_segment}
        Du = \frac{y - x}{|y - x|}.
    \end{align}
    In particular, two light segments can only be nested or meet at their endpoints, and every light segment is contained in one and only one maximal light segment.
\end{rmk}

In this Section we will prove that, if $\rho \in L^\infty(\Omega)$, then every maximal light segment of the solution to \cref{prb:variational_problem} must have endpoints on the boundary (\cref{thm:antipeeling} below) and that, if $\psi \in L^\infty(\del\Omega)$, then it cannot have any internal light segment.
Heuristically these results say that a bounded mean curvature $\rho$ does not have enough energy to generate a light segment by itself: if there is one it must be induced ``artificially'' by a Dirichlet boundary datum, while, as a matter of fact, no bounded capillary datum can produce one.

% That a light segment cannot be created under bounded curvature should sound sensible, at least under the strong assumption that $u$ is smooth and every principal curvature is bounded...

Both Bartnik--Simon's and our \cref{thm:capillary_no_light} are proven by a comparison argument with the constant mean curvature hypersurfaces
we are about to describe. For a fixed $o \in \R^m$ we seek functions that are radially symmetric around $o$, that is, of the form $v(x) = \upsilon(|x - o|)$ for some $C^2$-regular function $\upsilon: (0,\infty)\to \R$ whose mean curvature is a chosen constant $\Lambda\in\R$. It is easily seen that the mean curvature equation in polar coordinates around $o$ reads
\begin{align*}
    \frac{1}{\rho^{m-1}}\frac{d}{d\rho}\left(\frac{\rho^{m-1}\upsilon'(\rho)}{\sqrt{1 - |\upsilon'(\rho)|^2}}\right) = \Lambda
\end{align*}
where $\upsilon' = \tfrac{d\upsilon}{d\rho}$. Integrating this equation gives a family of constant mean curvature functions (CMC, for short)
\begin{align}\label{eq:barriers}
    v_{o,\Lambda,K}^{\pm}(x) = v_{o,\Lambda,K}^{\pm}(o) \, \pm \int_0^{|x-o|} \frac{K + \frac{\Lambda s^m}{m}}{\sqrt{s^{2(m-1)}+\left( K + \frac{\Lambda s^m}{m} \right)^2}} \, ds,
\end{align}
where $K$ is an integration constant.

\begin{rmk}\label{rmk:prop_v}
    The functions $v_{o,\Lambda,K}^\pm$ enjoy the following properties:
    \begin{enumerate}[label=(\roman*)]
    \item\label{item:v_space-like} $v_{o,\Lambda,K}^\pm$ is strictly space-like in $\R^m\setminus\{o\}$;
    \item\label{item:vCMC} $v_{o,\Lambda,K}^\pm$ is a classical solution to $H_{v^\pm} = \pm \Lambda$ in $\R^m\setminus\{o\}$; more generally, it is a weak solution with $H_{v^\pm} = \pm\Lambda + \omega_{m-1}K\delta_o$ in $\R^m$;
    \item\label{item:cone_singularity} for each $K>0$ the function $v_{o,\Lambda,K}^\pm$ has a light cone singularity at $o$, namely
    \begin{align*}
        \lim_{|x - o|\to 0}\frac{v_{o,\Lambda,K}^\pm(x)- v_{o,\Lambda,K}^\pm(o)}{|x - o|} = \pm 1,
    \end{align*}
    while for $K=0$ it is a hyperboloid;
    \item\label{item:v_converge} for each $\Lambda\in\R$ the functions $v_{o,\Lambda,K}^\pm$ converge to $v_\infty^{\pm}$ as $K\to \infty$ uniformly on compact sets of $\R^m$ where
    \begin{align*}
        v_{o,\infty}^\pm (x) = v_{o,\Lambda,K}^{\pm}(o) \pm |x - o|
    \end{align*}
    is the \emph{future} (resp. \emph{past}) \emph{light cone} centred at $(v_{o,\Lambda,K}^{\pm}(o),o)$.
\end{enumerate}
\end{rmk}

We recall the \emph{anti-peeling Theorem} of Bartnik and Simon and observe that, since its proof only relies on studying what happens in the interior of the domain, we can state it in a slightly more general form that allows for mixed boundary data.

\begin{thm}[{\cite[Theorem 3.2]{BartnikSimon1982}}]\label{thm:antipeeling}
    Let $u$ be the solution to \cref{prb:variational_problem}. If $\rho \in L^\infty(\Omega)$, then every maximal light segment for $u$ has both ends on the boundary of $\Omega$.
\end{thm}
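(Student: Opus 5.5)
We argue by contradiction, following the Bartnik--Simon comparison scheme but localized so that only the interior of $\Omega$ is involved. Suppose $\overline{xy}$ is a maximal light segment for $u$ with, say, the endpoint $y$ lying in $\Omega$ (the case of $x\in\Omega$ is symmetric, using $v^-$ instead of $v^+$). By \cref{rmk:diff_light_segment}, $u$ is differentiable on the open segment with $Du = e \doteq (y-x)/|y-x|$, and maximality means that $u(y+te) < u(y) + t$ for every small $t>0$ with $y+te\in\Omega$. Fix a small ball $B = B_r(y)\Subset\Omega$. By \cref{lem:local_minimizer}, $u|_B$ is the unique maximizer of $I_\rho$ in $A_u(B;\del B)$, a Dirichlet problem. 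Since $\psi$ plays no role on $B$, we may now apply \cref{lem:comparison} on $B$ with $\Gamma=\del B$.

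The barrier is a CMC cone from \eqref{eq:barriers}. Let $\Lambda\doteq\norm{\rho}_{L^\infty}$ and take a point $o$ on the segment, close to $y$, say $o = y - \delta e$. Consider $v = v^{+}_{o,\Lambda,K}$, normalized so that $v(o)=u(o)$. By \cref{rmk:prop_v}, $v$ has constant mean curvature $\Lambda$ away from $o$, its sign chosen so that it is a subsolution relative to $\rho$ (i.e.\ it maximizes $I_{\rho'}$ with $\rho'\le\rho$ on annuli; this sign bookkeeping must be checked). Moreover $v$ has a future light-cone singularity at $o$, and $v\to u(o)+|\cdot-o|$ as $K\to\infty$. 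The cone $u(o)+|x-o|$ dominates $u$ everywhere, since $u$ is $1$-Lipschitz and $B$ is convex, with equality exactly along the forward light segment through $o$. Choosing $K$ appropriately, and shifting $v$ down by a small amount $\epsilon$, we arrange that $v-\epsilon\le u$ on $\del B$. Here we use the strict inequality coming from maximality: in the forward direction $e$ beyond $y$ we have $u<$ cone, while in the other directions the cone exceeds $u$ but $v$ lies strictly below the cone by a definite amount for finite $K$. Hence we compare on the region where $v$ is defined and away from the singular point.

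The key steps, in order, are as follows.

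(i) Establish a comparison for the punctured situation. On $B\setminus\set{o}$, $v$ is a smooth CMC solution, and since $v$ is the maximizer of $I_{\pm\Lambda}$ with its own boundary values (\cref{rmk:solutions_are_max}), \cref{lem:comparison} gives $v-\epsilon-\sup_{\del B}(v-\epsilon-u)\le u$. The singular point needs care: we use the weak form in \cref{rmk:prop_v}\ref{item:vCMC}, where the Dirac mass $\omega_{m-1}K\delta_o$ has the favourable sign, so $v$ is still a sub-solution in $B$.

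(ii) Derive the contradiction near $o$. Along the segment, $u(o+te) = u(o)+t$ for $t\in[-\ell,\delta]$. Meanwhile $v(o+te) < u(o)+t$ for $t \neq 0$, with a gap of order $t^2/K$, or more precisely $\int_0^t (1-\mathrm{slope})$. Comparing backward along the segment ($t<0$) gives $v$ above $u$ there, since $v(o-se)\ge u(o)-s+$gap while $u(o-se)=u(o)-s$. Thus the ordering fails unless the boundary sup absorbs it. The quantitative point is to choose $r,\delta,K,\epsilon$ so that the boundary deficit $\sup_{\del B}(v-u)$ is smaller than the interior excess. This is possible precisely because past $y$ the function $u$ bends strictly below the cone, making the cone at $o$ "peel off" $u$ on the forward side of $\del B$.

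We expect the main obstacle to be step (ii). We must show that the strict gap $u<$ cone, known only along the forward ray, is enough to force $v\le u$ on all of $\del B$ while $v>u$ somewhere inside. The first attempt will use that the set where $u$ touches the cone $u(o)+|x-o|$ is exactly the light segment, so by compactness $u\le$ cone $-\eta$ on $\del B$ outside a small cap around $y+re$; on that cap, maximality at $y$ gives the strict gap. Letting $K\to\infty$ makes $v$ arbitrarily close to the cone on $\del B$ while the interior gain near $o$ along $-e$ persists. The $\Lambda$-term only costs $O(\Lambda r)$ in slope, which is absorbed by taking $r$ small.
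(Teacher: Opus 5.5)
Your proposal has a genuine gap, and it lies in the configuration itself rather than in the bookkeeping of step (ii): as set up, no contradiction can arise.

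With the apex $o$ inside $B$ and $v=v^+_{o,\Lambda,K}$, $\Lambda=\norm{\rho}_{L^\infty(\Omega)}$, the barrier is a weak subsolution on all of $B$. You note this yourself in (i): the Dirac mass at $o$ has the favourable sign. Hence $v-u\le\sup_{\del B}(v-u)$ holds for \emph{every} maximizer $u$, with or without a light segment, and no choice of $r,\delta,K,\epsilon$ can make an interior excess beat the boundary deficit.

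You can check this by hand. Since $v$ is radially symmetric about $o$, $v(o-se)=u(o)+\int_0^s g>u(o)$, not $u(o)-s$ plus a small gap; your claim $v(o+te)<u(o)+t$ is false for every $t<0$. Along the backward segment one gets $v-u=s+\int_0^s g$, which increases all the way to $\del B$. In the limit $K\to\infty$ this is elementary: for the cone $c(z)=u(o)+|z-o|$ and any $1$-Lipschitz $u$, $c-u$ is nondecreasing along every ray from $o$, because $u(o+T\nu)-u(o+t\nu)\le T-t$. Finally, the downward shift by $\epsilon$ destroys the contact $v(o)=u(o)$, which is the only place where the light segment can be exploited.

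The missing idea is to put the apex on the \emph{boundary} of the comparison region. There $v$ touches $u$ with no shift, and the light segment enters the region from the contact point and immediately beats the slope $g<1$ of the barrier. This is exactly the comparison the paper runs in Steps 1, 3 and 4 of the proof of \cref{thm:stronger_no_light}; for an interior endpoint no Neumann part is needed.

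For the future endpoint $y\in\Omega$, proceed as follows (or reduce to the past endpoint via $u\mapsto -u$, which maps maximizers of $I_{\rho,\psi}$ to maximizers of $I_{-\rho,-\psi}$ and reverses light segments).

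\begin{enumerate}[label=(\alph*)]
    \item Take $0<r<\min(|x-y|,\dist(y,\del\Omega))$, $o=y-re$, $\Omega'=B_r(y)$, and $v=v^+_{o,\Lambda,K}$ with $\Lambda=-\norm{\rho}_{L^\infty(\Omega)}$ and $v(o)=u(o)$. This $v$ lies \emph{above} $u$ in the comparison: it maximizes $I_{-\Lambda}$ with its own boundary values, with $-\Lambda\ge\rho$ and apex on $\del\Omega'$ (cf.\ \cref{lem:barrier_is_maximizer}).
    \item Near $o$ on $\del B_r(y)$: $Du(o)=e$ and the sphere identity $e\cdot(z-o)=|z-o|^2/(2r)$ give $(u(z)-u(o))/|z-o|\to0$, whereas $v(z)-v(o)\ge|z-o|/2$ once $K$ is large.
    \item Away from $o$: $u$ lies strictly below the future cone with apex $(u(o),o)$. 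Any light segment starting at $o$ must point along $Du(o)=e$, and it stops at $y$ by maximality. The sphere meets that ray only at $o$ and at $y+re$. Uniform convergence of $v$ to the cone as $K\to\infty$ then gives $u\le v$ there.
    \item \cref{lem:comparison} now gives $u\le v$ in $B_r(y)$. This fails on $\overline{oy}$, where $u$ rises with slope $1$ and $v$ with slope $g<1$.
\end{enumerate}

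Note that maximality is used only to separate $u$ from the cone on the sphere away from the contact point, not to create an interior excess.
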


\begin{rmk}\label{rmk:consequence_antipeeling}
    As an immediate consequence of \cref{thm:antipeeling} we have that, if $\Gamma = \del\Omega$ and $\varphi = \bar\varphi|_{\del\Omega}$ for some $\bar\varphi \in S(\Omega)$, then the maximizer of $I_{\rho}$ in $A_\varphi(\Omega)$ cannot have a light segment. Indeed, if there was one, then the maximal light segment $\overline{xy}$ that it generates must reach the boundary, that is $x,y \in\del\Omega$ and $\varphi(y) - \varphi(x) = |x - y|$, against the assumption on $\varphi$.
\end{rmk}

We can now prove the no-light-segment \cref{thm:capillary_no_light} that shows that the conclusion of \cref{rmk:consequence_antipeeling} holds even when $\Gamma \neq\del\Omega$, provided the capillary datum $\psi$ is bounded and $\Omega$ is convex. The Theorem does not exclude the possibility that the maximizer has light segments on the boundary. As we shall see later, this eventuality will not concern us.

\begin{thm}\label{thm:stronger_no_light}
    If $\Omega$ is convex, $\rho \in L^\infty(\Omega)$ and $\psi \in L^\infty(\del\Omega)$, the solution to \cref{prb:variational_problem} does not have interior light segments.
\end{thm}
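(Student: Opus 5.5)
Suppose, for contradiction, that the maximizer $u$ has a light segment meeting the interior of $\Omega$, and let $\overline{xy}$ be the maximal light segment containing it (\cref{rmk:diff_light_segment}). By \cref{thm:antipeeling} both endpoints lie on $\del\Omega$. By convexity the open segment lies in $\Omega$, and along it $Du \equiv e \doteq (y-x)/|y-x|$. Since $\Gamma$ may be nonempty, at least one endpoint, say $y$, must lie in the capillary part $\Nu$, or in its closure up to a small shift. Otherwise both endpoints would be in $\Gamma$, and the light segment would be forced by the Dirichlet data. I expect that the statement either excludes this case implicitly through \eqref{eq:hp_varphi} with strict inequality, or handles it exactly as in \cref{rmk:consequence_antipeeling}. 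So the real task is to rule out a light segment ending at a point $y\in\Nu$ near which $u$ is free.

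The plan is a local comparison near $y$ using the CMC barriers \eqref{eq:barriers}. Fix a small ball $B$ centred at a point $o$ on the line through $x,y$, placed slightly beyond $y$ (outside $\overline\Omega$), and set $\Omega'\doteq\Omega\cap B$. By \cref{lem:local_minimizer}, $u|_{\Omega'}$ maximizes $I_{\rho,\psi}$ among weakly space-like functions with prescribed values on $\Omega\cap\del B$ and free (capillary) boundary on $\del\Omega\cap B$. Because $u$ increases at unit rate along $\overline{xy}$ toward $y$, the graph of $u$ near $y$ touches a past light cone with vertex at $(u(y),y)$, or more precisely lies above it along the segment. I would compare $u$ with $v = v^-_{o,\Lambda',K}$, where $\Lambda' > \norm{\rho}_\infty$ and $K$ is large, so that $v$ approximates the light cone by \cref{rmk:prop_v}\ref{item:v_converge}. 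The sign of $\Lambda'$ is chosen so that $v$ is a strict supersolution relative to $\rho$. The vertical position is chosen so that $v \ge u$ on $\Omega\cap\del B$ but $v(z)<u(z)$ at interior points $z$ of the segment close to $y$. This uses $u(y)-u(z)=|y-z|$ together with the strict space-likeness of $v$ away from $o$.

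The key point is the capillary boundary. On $\del\Omega\cap B$ the conormal flux $w_v Dv\cdot n$ of the barrier blows up like $K$: since the singular vertex $o$ lies outside $\Omega$ beyond $y$ and $\Omega$ is convex, $Dv$ points inward, and $Dv\cdot n$ is close to $-1$ with $w_v\to\infty$. This beats the bounded datum $\psi$, which is where $\psi\in L^\infty$ and convexity enter. The barrier is then a strict ``subsolution of the capillary problem'' in the direction needed. I would then run the truncation argument of \cref{lem:comparison} on $\Omega'$: replace $u$ by $\max(u,v)$ or $\min(u,v)$ on $\{v<u\}\Subset\Omega'\cup(\del\Omega\cap B)$. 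Using concavity of the area integrand, the flux inequality on the boundary, and the mean curvature inequality inside, this strictly increases $I_{\rho,\psi}$, contradicting maximality (uniqueness from \cref{lem:existence_maximizer}). Since $v$ is only a weak (distributional) solution, the computation is done via the first variation inequality $\int w_v Dv\cdot D\eta \lessgtr \int \Lambda'\eta+\int_{\del\Omega}(\text{flux})\eta$ for $\eta=(u-v)^+\ge0$, combined with the concavity inequality $\sqrt{1-|Du|^2}\le\sqrt{1-|Dv|^2}-w_vDv\cdot(Du-Dv)$.

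The main obstacle is the geometric placement. I need $o$, $B$ and the additive constant such that three things hold simultaneously: the set $\{u>v\}$ is nonempty (it contains points of the light segment), it stays away from $\Omega\cap\del B$, and the flux of $v$ on $\del\Omega\cap B$ has the right sign and dominates $\norm{\psi}_\infty$. I would first try $o=y+\delta e$, let $K\to\infty$, and use that a light segment hitting $\del\Omega$ transversally or tangentially still leaves $\{u>v\}$ confined near $y$. The tangential case, where $e$ is tangent to $\del\Omega$ at $y$, is the delicate one, and convexity is what should make $Dv\cdot n\le 0$ there.
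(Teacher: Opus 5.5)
\paragraph{Where the proposal fails.} There is a genuine gap at the step you yourself call the main obstacle, and it cannot be closed with the vertex placed outside $\overline\Omega$ beyond $y$.

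The barrier $v=v^-_{o,\Lambda',K}$ is radially decreasing away from $o$. So on $\overline{\Omega\cap B}$ it is smallest on $\partial B$, where it is constant. Now suppose $v\ge u$ on $\Omega\cap\partial B$ and $v(z_0)<u(z_0)$ at a point $z_0$ of the segment inside $B$. Together these force $\sup_{\Omega\cap\partial B}u<u(z_0)=u(y)-|y-z_0|$: $u$ would have to stay uniformly below $u(y)$ on the whole cap $\Omega\cap\partial B$.

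Nothing you know about $u$ gives such an upper bound. The light segment only gives the lower bound $u(z)\ge u(y)-|z-y|$, i.e.\ $u$ touches the past cone at $y$ from \emph{above}. Suppose $u$ looked near $y$ like the null plane $u(y)+e\cdot(z-y)$, which is exactly the behaviour the theorem must exclude. Then the supremum of $u$ over the cap would be at least about $u(y)$, and the required inequality fails. A barrier modelled on a past cone can only be placed \emph{below} $u$, with the comparison run in the direction $v\le u$.

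Your flux claim also has the wrong sign. With $o$ beyond $y$, $Dv^-$ points towards $o$, hence outwards near $y$. So $w_vDv\cdot n\to+\infty$ near a transversal exit point. In the tangential case the flux changes sign at $y$. Convexity gives a uniform sign only when the vertex lies inside $\Omega$.

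\paragraph{The missing idea.} Put the vertex on the light segment itself, as a point of the Dirichlet part of the comparison domain. By the symmetry $u\mapsto -u$, which exchanges the two endpoints and $v^\pm$, one may take the past endpoint $x\in\Nu$. If you prefer to keep $y$, run the mirror-image argument with $v^+$ and the direction $u\le v$.

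The paper sets $o=x+rp$ and $\Omega'=\Omega\cap B_r(x)$, so that $o\in\Gamma'=\overline\Omega\cap\partial B_r(x)$. It normalizes $v(o)=u(o)$ and takes $\Lambda=-\norm{\rho}_{L^\infty(\Omega)}$. The inequality $v\le u$ on $\Gamma'$ then uses only information actually available:
\begin{enumerate}
\item Away from $o$: $u>v_o$ strictly, because any light segment through $o$ must be parallel to $p$. Combined with $v\to v_o$ uniformly as $K\to\infty$, this gives $v\le u$ there.
\item Near $o$: differentiability of $u$ at $o$ and the sphere identity $p\cdot(z-o)=-|z-o|^2/(2r)$ give $u(o)-u(z)=o(|z-o|)$. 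A barrier steep enough near its vertex beats this.
\end{enumerate}
On $\Nu'$, since $o\in\Omega$, convexity gives $n\cdot(z-o)\ge\dist(o,\partial\Omega)>0$. Hence $\psi_v\le-\norm{\psi}_{L^\infty}\le\psi$ for $K$ large.

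The comparison lemma then yields $v\le u$ in $\Omega'$. This contradicts $u=v_o<v$ on $\overline{xo}\setminus\{o\}$. Because the vertex now lies on $\partial\Omega'$, one also needs that $v$ is still a maximizer there, with no Dirac mass at $o$. That is exactly the appendix lemma. In your configuration the vertex stays away from $\overline{\Omega'}$, so this technicality disappears, but so does any way of producing the contradiction.
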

\begin{proof}
Assume by contradiction that there is a maximal light segment $\overline{xy}\subset\Omega$. By \cref{thm:antipeeling} it must be $x,y \in \del\Omega$ and since $\varphi$ satisfies \eqref{eq:hp_varphi}, we can assume that $x\in \Nu$. We will construct a comparison CMC of the form \eqref{eq:barriers} centred at some point $o$ on the light segment and apply the comparison principle on $\Omega' = B_r(x) \cap \Omega$, where $r = |x - o|$. The key point is to show that we can choose the parameter $K$ in \eqref{eq:barriers} big enough independently of $r$ in such a way that the premises in \eqref{eq:comparison} are fulfilled.

\textit{Step 1: setup for the comparison argument.}
Set $p\doteq\tfrac{y-x}{|y-x|}$ and fix $0<r<|x-y|$ small enough so that $B_r(x)\cap\del\Omega\subseteq \Nu$, which is possible since $\Nu$ is open in $\del\Omega$. Let $o=o_r$ be the intersection of $\overline{xy}$ with $\del B_r(x)$, i.e.\ $o=x+rp$; since $r<|x-y|$, $o$ is an interior point of $\overline{xy}$, so $o\in\Omega$. Define
\begin{align*}
    \Omega' \doteq \Omega\cap B_r(x), \qquad \Gamma' \doteq \overline{\Omega}\cap\del B_r(x), \qquad \Nu' \doteq \del\Omega\cap{B_r(x)},
\end{align*}
so that $\Gamma'$ is compact, $\Gamma'\cup\Nu'=\del\Omega'$, $\Omega'$ is a convex open set and $o\in\Nu'$. By a standard fact in Measure Theory (see for instance \cite[Proposition 2.16]{Maggi}), $\Gamma' \subset \Omega$ for a.e. $r >0$. We tacitly restrict to such $r$.

Let $v_o(z)\doteq u(o)-|z-o|$ be the past light cone with apex $(u(o),o)$, and let $\Lambda\doteq-\norm{\rho}_{L^\infty(\Omega)}$. For $K>0$ let $v\doteq v^-_{o,\Lambda,K}$ as in \eqref{eq:barriers}, normalized so that $v(o)=u(o)$; explicitly
\begin{align*}
    v(z) = u(o) - \int_0^{|z-o|} g(s)\,ds, \qquad g(t)\doteq \frac{K+\Lambda s^m/m}{\sqrt{s^{2(m-1)}+\big(K+\Lambda s^m/m\big)^2}}\in(0,1),
\end{align*}
so that $v$ solves $H_v=-\Lambda$ classically on $\R^m\setminus\{o\}$, in particular on $\Omega'$ (recall $o\notin\Omega'$). Define
\begin{align*}
    \psi_v \doteq \frac{Dv\cdot n}{\sqrt{1-|Dv|^2}} \quad \text{on $\Nu$},
\end{align*}
an element of $L^\infty(\Nu)$ depending, as $v$, on $r,\Lambda,K$. We shall prove that for every $r$ we can choose $K$ large enough so that
\begin{align}
    \psi_v &\leq \psi \qquad \text{on $\Nu$}, \label{eq:goal_Nu}\\
    v &\leq u \qquad \text{on $\Gamma$}. \label{eq:goal_Gamma}
\end{align}

\textit{Step 2: comparison on $\Nu'$.}
Since $\del\Omega$ is convex, for $\Haus^{m-1}$-a.e. $z \in \del\Omega$ the tangent plane $T_z\del\Omega$ is well defined and $T_z\del\Omega \cap \Omega =
\varnothing$. Since $o \in \Omega$, it follows that
\begin{align*}
    n(z)\cdot(z - o) = \dist(o, T_z\del\Omega) \geq \dist(o,\del\Omega) > 0
    \qquad \text{for $\mathscr H^{m-1}$-a.e.\ } z \in \del\Omega,
\end{align*}
and therefore, setting $s^*(r) \doteq \sup_{z\in\Nu'} |z - o| \leq 2r$,
\begin{align*}
    \cos\theta(z) \doteq \frac{n(z)\cdot(z-o)}{|z-o|}
    \ \geq\ \frac{\dist(o,\del\Omega)}{s^*(r)} \doteq \cos\theta_*(r)
    \ >\ 0
    \qquad \text{for $\mathscr H^{m-1}$-a.e.\ } z \in \Nu'.
\end{align*}
By radial symmetry, for $z\in\Nu'$ and $s=|z-o|$, $Dv(z)=-g(s)\,\tfrac{z-o}{|z-o|}$, hence
\begin{align*}
    \psi_v(z) = -\cos\theta(z)\,\frac{g(s)}{\sqrt{1-g(s)^2}}, \qquad \text{where} \quad \frac{g(s)}{\sqrt{1-g(s)^2}} = \frac{K}{s^{m-1}}+\frac{\Lambda s}{m}.
\end{align*}
Since $K>0$ and $\Lambda\le0$, the right-hand side is strictly decreasing in $s$, so for $s\leq s^*(r)$
\begin{align*}
    \frac{g(s)}{\sqrt{1-g(s)^2}} \ \geq\ \Phi(K,r)\doteq \frac{K}{s^*(r)^{m-1}} + \frac{\Lambda\,s^*(r)}{m}.
\end{align*}
Once $K\geq \norm{\rho}_{L^\infty(\Omega)}\,s^*(r)^m/m$ (so that $\Phi(K,r)\geq0$), this gives $\psi_v(z)\leq -\cos\theta_*(r)\Phi(K,r)$ for every $z\in\Nu$. Since $\Phi(K,r)\to+\infty$ as $K\to\infty$ ($r$ fixed), there is $K_1=K_1(r)$ such that
\begin{align}\label{eq:K1}
    \psi_v \ \leq\ -\norm{\psi}_{L^\infty(\del\Omega)} \ \leq\ \psi \qquad \text{on $\Nu'$}, \qquad \forall K\geq K_1(r),
\end{align}
which is \eqref{eq:goal_Nu}.

\textit{Step 3: comparison on $\Gamma'$.}
We first record that, $u$ being weakly space-like, $u\geq v_o$ everywhere in $\Omega$; moreover
\begin{align}\label{eq:u_above_cone}
    u>v_o \qquad \text{on $\Gamma'\setminus\{o\}$}.
\end{align}
Indeed, if $u(z)=v_o(z)$ for some $z\in\Gamma'\setminus\{o\}$, the weak space-like inequality forces $\overline{zo}$ to be a light segment for $u$ that is transverse to $\overline{xy}$, contradicting \cref{rmk:diff_light_segment}.

Since $0<r<|x-y|$, $o$ is an interior point of $\overline{xy}$, so by \cref{rmk:diff_light_segment} $u$ is differentiable at $o$ with $Du(o)=p$:
\begin{align*}
    \lim_{z\to o} \frac{u(z)-u(o)-p\cdot(z-o)}{|z-o|} = 0.
\end{align*}
For $z\in\Gamma'$ taking the square of the vector  $z-x=(z-o)+(o-x) = z - o + rp$ gives
\begin{align}\label{eq:sphere_identity}
    p\cdot(z-o) = -\frac{|z-o|^2}{2r}, \qquad \forall z \in \Gamma'.
\end{align}
Combining the two
\begin{align*}
    \frac{u(o)-u(z)}{|z-o|} = \frac{|z-o|}{2r} - \frac{u(z)-u(o)-p\cdot(z-o)}{|z-o|} \ \longrightarrow\ 0 \qquad \text{as $z \to o$ in $\Gamma'$.}
\end{align*}
In particular there is $\delta=\delta(r)\in(0,r)$ such that
\begin{align}\label{eq:near_u}
    u(o)-u(z) \ \leq\ \tfrac12|z-o| \qquad \forall z\in\Gamma'\cap B_\delta(o).
\end{align}
Since $s\mapsto K/s^{m-1}+\Lambda s/m$ is decreasing in $s$, its minimum over $t\in(0,\delta]$ is attained at $s=\delta$; as $K/\delta^{m-1}+\Lambda\delta/m\to+\infty$ when $K\to\infty$ ($r,\delta$ fixed), there is $K_2=K_2(r)$ such that for $K\geq K_2(r)$,
\begin{align*}
    \frac{g(s)}{\sqrt{1-g(s)^2}} = \frac{K}{s^{m-1}}+\frac{\Lambda s}{m} \ \geq\ 1, \qquad \text{thus} \qquad g(t)\geq\tfrac1{\sqrt2}>\tfrac12, \qquad \forall t\in(0,\delta].
\end{align*}
Hence for $z\in\Gamma'\cap B_\delta(o)$, with $t=|z-o|\le\delta$,
\begin{align*}
    v(z)-v(o) = -\int_0^t g(s)\,ds \ \leq\ -\tfrac t2 = -\tfrac12|z-o|,
\end{align*}
which combined with \eqref{eq:near_u} and $v(o)=u(o)$ gives
\begin{align}\label{eq:near_zone}
    v(z) \ \leq\ u(o)-\tfrac12|z-o| \ \leq\ u(z), \qquad \forall z\in\Gamma'\cap B_\delta(o), \ K\geq K_2(r).
\end{align}

The set $\Gamma'\setminus B_\delta(o)$ is compact and, by \eqref{eq:u_above_cone}, $u-v_o$ is continuous and strictly positive on it, so
\begin{align*}
    \mu(r) \doteq \min_{\Gamma'\setminus B_\delta(o)} (u-v_o) \ >0
\end{align*}
(if $\Gamma'\subseteq B_\delta(o)$ there is nothing to prove here). By \ref{item:v_converge} in \cref{rmk:prop_v}, $v=v_{o,\Lambda,K}\to v_o$ uniformly on the compact set $\Gamma'\setminus B_\delta(o)$ as $K\to\infty$ ($\Lambda,r$ fixed), so there is $K_3=K_3(r)$ such that for $K\geq K_3(r)$,
\begin{align}\label{eq:far_zone}
    v \ <\ v_o + \mu(r) \ \leq\ u \qquad \text{on } \Gamma'\setminus B_\delta(o).
\end{align}

\smallskip
Combining \eqref{eq:near_zone} and \eqref{eq:far_zone}, for
\begin{align*}
    K \ \geq\ K_0(r) \doteq \max\big(K_2(r),K_3(r)\big)
\end{align*}
we obtain $v\leq u$ on all of $\Gamma'$, which is \eqref{eq:goal_Gamma}.

\textit{Step 4: comparison and contradiction.}
Fix $r$ as above and $K\geq\max(K_0(r),K_1(r))$, so that both \eqref{eq:goal_Nu} and \eqref{eq:goal_Gamma} hold. \cref{lem:barrier_is_maximizer} in the Appendix shows that $v$ is the maximizer of $I_{\Lambda,\psi_v}$ in $A_{v}(\Omega';\Gamma')$ (since $v$ has a singularity at $o\in \del\Omega'$ we need to pay more attention than in \cref{lem:local_minimizer}). Apply \cref{lem:comparison} on $\Omega'$, with $u_1=u$, $u_2=v$, $\rho_1=\rho$, $\rho_2=\Lambda$, $\psi_1=\psi$, $\psi_2=\psi_v$, and $\varphi_1=u|_{\Gamma'}$, $\varphi_2=v|_{\Gamma'}$: by the choice of $\Lambda = \rho_2$ and \eqref{eq:goal_Nu}, \eqref{eq:goal_Gamma} we get
\begin{align*}
    v-u \ \leq\ \sup_\Gamma(v-u) \ \leq\ 0 \qquad \text{in } \overline{\Omega'}.
\end{align*}
However, since $\overline{xo}$ is a light segment for $u$, this latter inequality implies that $v$ has a light segment in $\overline{xo}$, too, a contradiction.
\end{proof}

\section{Gradient estimates}\label{sec:grad_est}

\subsection{Intrinsic geometry of space-like graphs}

We begin by collecting a few geometric facts about space-like graphs. Given a strictly space-like function $u \in S(\Omega) \cap C^1(\overline\Omega)$, let
\begin{align}\label{eq:Rm_graph}
    \Sigma = F(\overline\Omega), \qquad F(x) = (u(x),x),
\end{align}
and let $g = F^*\scal{\cdot}{\cdot}$, a Riemannian metric since $u$ is space-like. Notice that $(\Omega,g)$ is isometric to $\Sigma$ with the induced metric. In Cartesian coordinates on $\Omega$,
\begin{align*}
    g_{ij} = \delta_{ij} - u_iu_j, \qquad g^{ij} = \delta^{ij} + w^2u^iu^j, \qquad w = \frac{1}{\sqrt{1-|Du|^2}},
\end{align*}
where $u_i \doteq \del_i u$ and $u^i \doteq \delta^{ij}u_j$. Throughout, $\nabla$ denotes the Levi-Civita connection of $g$. In particular the gradient with respect to $g$ of a smooth function $f:\Omega\to \R$ is $\nabla^if = g^{ij}f_j$ where $f_j \doteq \del_j f$. We denote norms as $|\nabla f|^2 = g^{ij}f_if_j$ and $|Df|^2 = \delta^{ij}f_if_j$. Notice that
\begin{align}
    |\nabla u|^2 = w^2|Du|^2 = w^2-1.
\end{align}

The second fundamental form and the mean curvature of $\Sigma$ are
\begin{align}\label{eq:frame_H}
    \II(X,Y) = \langle \babla_{F_*X}N, F_*Y \rangle, \qquad H \doteq \sum_{j=1}^m \scal{\babla_{F_*e_j}N}{F_*e_j},
\end{align}
where $\set e_j$ is a $g$-orthonormal frame on $\Omega$, and can be written in terms of the Euclidean Hessian of $u$ as
\begin{align}\label{eq:RM_second_and_mean}
    \II = wD^2u, \qquad H = \div(wDu),
\end{align}
and, intrinsically, in terms of the graph metric,
\begin{align*}
    w\II = \nabla^2u, \qquad wH = \Delta_gu,
\end{align*}
where $\Delta_g$ denotes the Laplace--Beltrami operator of $(\Omega,g)$. We will also need the following identities for the tilt function $w$. Differentiating directly,
\begin{align}\label{eq:Rm_gradients_w}
    Dw = w^3D^2u(Du), \qquad \nabla w = A\nabla u,
\end{align}
where $A$ denotes, as usual, the shape operator of $F$, i.e.\ $\scal{AX}{Y}=\II(X,Y)$. Taking the $g$-divergence of the second identity yields the Jacobi-type equation
\begin{align}\label{eq:mink_jacobi}
    \Delta_gw = w|A|^2+\scal{\nabla u}{\nabla H}.
\end{align}

Writing $dV_g$ for the volume form of $(\Omega,g)$, computing the determinant of $[g_{ij}]$ gives
\begin{align}\label{eq:Rm_volumes}
    dV_g = w^{-1}dx,
\end{align}
where $dx$ is the Lebesgue measure on $\Omega$.

For every vector field $X$ on $\Mink$ we define
\begin{align}\label{eq:dfn_div_sigma}
    \div_\Sigma X \doteq \sum_{j=1}^m \langle \babla_{F_*e_j}X,F_*e_j \rangle,
\end{align}
so that $H = \div_\Sigma N$. Decomposing $X = F_*X^\top - \scal{X}{N}N$ along $F$ and using \eqref{eq:frame_H} we have
\begin{align}\label{eq:div_sigma}
    \div_\Sigma  X &= \div_\Sigma  X^\top - \scal{X}{N}H.
\end{align}
Integrating \eqref{eq:div_sigma} on a piecewise smooth domain $E\subseteq \Omega$ gives the following integration-by-parts formula
\begin{align}\label{eq:sigma_int_part}
    \int_E \div_\Sigma  X \, \diff V_g = - \int_E H \scal{X}{N} \, \diff V_g + \int_{\del E} \scal{X}{\nu} \, d\Haus_g^{m-1},
\end{align}
where $d\Haus_g^{m-1}$ is the Hausdorff measure of $g$ and $\nu$ is the outer normal of $\del E$ in $(\Omega,g)$.

Finally, we record the integral identities that will serve as our main working tools.
\begin{lem}
    Let $E\subseteq\Omega$ be a compact domain with piecewise smooth boundary. Then for every$f,h~\in~C^2(E)$ we have
    \begin{gather}
        \int_E\big(\scal{\nabla f}{\nabla h}+f\Delta_gh\big)\,dV_g = \int_{\del E}f\scal{\nabla h}{\sigma}\,d\mathscr{H}_g^{m-1}, \label{eq:green_1}\\
        \int_E\big(f\Delta_gh-h\Delta_gf\big)\,dV_g = \int_{\del E}\scal{f\nabla h-h\nabla f}{\sigma}\,d\mathscr{H}_g^{m-1}. \label{eq:green_2}
    \end{gather}
    Moreover, if $h \in C_c^1(E)$, then
    \begin{gather}
    \int_E\big(f\scal{\nabla h}{\nabla u}+h\scal{\nabla f}{\nabla u}\big)\,dV_g = -\int_Ef h\,wH\,dV_g - \int_{\del \Omega} fhw \tanh\gamma \, d\mathscr{H}_g^{m-1}. \label{eq:Rm_weak_graph}
    % \\
    % \int_E\big(f\nabla^i\eta+\eta\nabla^if\big)\,dV_g = -\int_Ef\eta\,N^iH\,dV_g.
    \end{gather}
    where $\gamma$ is the contact angle between the surface and the boundary of the cylinder $\R\times\Omega$ defined in \eqref{eq:contact_angle}.
\end{lem}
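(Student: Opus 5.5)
The plan is to derive all three identities from the Riemannian divergence theorem on $(E,g)$: by \eqref{eq:Rm_volumes}, $(E,g)$ is a compact Riemannian manifold with piecewise smooth boundary. Here $\sigma$ is the outer $g$-unit normal of $\del E$, and $d\Haus^{m-1}_g$ is the induced boundary measure. For any $C^1$ vector field $Y$ on $E$ we have $\int_E \div_g Y\,dV_g = \int_{\del E}\scal{Y}{\sigma}\,d\Haus^{m-1}_g$, with $\div_g$ the divergence of $g$. Piecewise smoothness of $\del E$ is enough for this, since the non-smooth part of $\del E$ has zero $\Haus^{m-1}_g$-measure.

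\emph{Step 1, identity \eqref{eq:green_1}.} I apply the divergence theorem to $Y=f\nabla h$, which is $C^1$ because $f,h\in C^2(E)$. The Leibniz rule gives $\div_g(f\nabla h)=\scal{\nabla f}{\nabla h}+f\Delta_g h$, and this yields \eqref{eq:green_1} immediately.

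\emph{Step 2, identity \eqref{eq:green_2}.} I write \eqref{eq:green_1} once as stated and once with the roles of $f$ and $h$ exchanged, then subtract. The symmetric terms $\scal{\nabla f}{\nabla h}$ cancel, which leaves \eqref{eq:green_2}.

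\emph{Step 3, identity \eqref{eq:Rm_weak_graph}.} I interpret the hypothesis on $h$ as saying that $h$ vanishes near $\del E\setminus\del\Omega$, so the only boundary contribution comes from $\del E\cap\del\Omega$. I apply the divergence theorem to $Y=fh\,\nabla u$. This needs only $f,h\in C^1$ and $u\in C^2$. Using $\scal{\nabla(fh)}{\nabla u}=f\scal{\nabla h}{\nabla u}+h\scal{\nabla f}{\nabla u}$ and $\Delta_g u=wH$, I get
\begin{align*}
\int_E\big(f\scal{\nabla h}{\nabla u}+h\scal{\nabla f}{\nabla u}\big)\,dV_g=-\int_E fh\,wH\,dV_g+\int_{\del E\cap\del\Omega} fh\,\scal{\nabla u}{\sigma}\,d\Haus^{m-1}_g .
\end{align*}

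The main point of this step is identifying $\scal{\nabla u}{\sigma}$ on $\del\Omega$. The $g$-unit outer normal to the hypersurface with Euclidean conormal $n$ is $\sigma^i=g^{ij}n_j/\sqrt{g^{kl}n_kn_l}$, so that $\scal{\nabla u}{\sigma}=u_i g^{ij}n_j/\sqrt{g^{kl}n_kn_l}$. The formula $g^{ij}=\delta^{ij}+w^2u^iu^j$ gives:
\begin{itemize}
\item $g^{ij}u_j=(1+w^2|Du|^2)u^i=w^2u^i$, so the numerator equals $w^2\,Du\cdot n$;
\item the denominator equals $\sqrt{1+w^2(Du\cdot n)^2}$.
\end{itemize}
By \eqref{eq:contact_angle}, $w\,Du\cdot n=-\sinh\gamma$. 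Hence the numerator is $-w\sinh\gamma$ and the denominator is $\cosh\gamma$, so
\begin{align*}
\scal{\nabla u}{\sigma}=-w\tanh\gamma \qquad\text{on }\del E\cap\del\Omega .
\end{align*}
Substituting this into the boundary integral gives exactly \eqref{eq:Rm_weak_graph}.

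The only delicate points are this normal computation and the bookkeeping of which part of $\del E$ contributes. Everything else is the standard Leibniz rule together with the divergence theorem.
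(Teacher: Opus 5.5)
Your proposal is correct and is essentially the paper's proof. Identities \eqref{eq:green_1}--\eqref{eq:green_2} come from the divergence theorem applied to $f\nabla h$. Your boundary computation $\scal{\nabla u}{\sigma}=-w\tanh\gamma$, via $\sigma^i=g^{ij}n_j/\sqrt{g^{kl}n_kn_l}$, is exactly the paper's. The only cosmetic difference is in \eqref{eq:Rm_weak_graph}: you apply the intrinsic divergence theorem to $fh\nabla u$ and use $\Delta_g u=wH$, whereas the paper inserts the ambient field $X=fh\,\del_0$ into \eqref{eq:sigma_int_part}. Since $\del_0^\top=-\nabla u$ and $-\scal{N}{\del_0}=w$, the two computations coincide.
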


Identity \eqref{eq:Rm_weak_graph} is precisely \eqref{eq:Rm_weak_sol}, rewritten in terms of the graph metric. 

\begin{proof}
    The first two identities follow by choosing $X = X^\top = f\nabla h$ in \eqref{eq:sigma_int_part}. Choose instead $X=fh \del_0$, for $h\in C^1_c(\Omega)$ and $f\in C^1(\overline\Omega)$. Noticing that $\del_0^\top = -\nabla u$ and recalling that $w = - \scal{N}{\del_0}$ we have
    \begin{align*}
        -\int_E \left(f\scal{\nabla h}{\nabla u} + h\scal{\nabla f}{\nabla u} \right) \, dV_g = \int_E fhH w \, dV_g - \int_{\del \Omega} fh \scal{\nabla u}{\nu} \, d\mathscr{H}_g^{m-1}
    \end{align*}
    as $\div \del_0 = 0$ and $h \in C_c^1(E)$, hence the only portion of the boundary where $h$ may not vanish is the one lying on $\del \Omega$, where $\sigma = \nu$. We are left with computing $\scal{\nabla u}{\nu} = \nu^iu_i$. Let $n^\flat = n_i \, dx^i$ be the exterior conormal (which is independent on the graph metric), so that
    \begin{align*}
        n^i = e^{ij}n_j, \qquad g^{ij}n_j = n^i + w^2 n^j u_j u^i
    \end{align*}
    Normalizing
    \begin{align}\label{Rm:conormal}
        g^{ij}n_i n_j = 1 + \psi^2, \qquad \nu = \frac{n + w\psi Du}{\sqrt{1 + \psi^2}}, \qquad \psi \doteq \scal{N}{n} = - \sinh\gamma
    \end{align}
    and therefore
    \begin{align*}
        \langle \nabla u, \nu \rangle = n^iu_i\frac{1 + w^2|Du|^2}{\sqrt{1 + w^2 (Du\cdot n)^2}} = \frac{w^2 Du\cdot n}{\sqrt{1 + w^2(Du\cdot n)^2}} = w\frac{\psi}{\sqrt{1 + \psi^2}}.
    \end{align*}
    Since $\sqrt{1 + \psi^2} = \cosh\gamma$ we conclude
    \begin{align*}
        \langle \nabla u, \nu \rangle =  - w\tanh\gamma
    \end{align*}
    and we are done.
\end{proof}

The Lorentzian distance on an achronal graph $\Sigma$ is defined by
    \begin{align}\label{eq:BS_ell}
        \ell(x,y) \doteq \sqrt{\abs{x-y}^2 - \big(u(x)-u(y)\big)^2}, \qquad x,y \in \overline{\Omega}
    \end{align}
    writing simply $\ell(x)=\ell(x,o)$ when the centre $o$ is fixed. Let $P$ be the position vector from the origin $(u(o),o)$. 
    % We consider it to be a vector field along the graph letting
    % \begin{align*}
    %     X(x) = (u(x) - u(o),x - o) \in T_{F(x)}M, \qquad x \in \Omega.
    % \end{align*}
    A direct computation gives the two identities
    \begin{align}\label{eq:BS_dist_identities}
        \abs{\nabla\ell}^2 = 1 + \ell^{-2}\scal{N}{P}^2, \qquad \Delta_g\big(\tfrac12\ell^2\big) = m + H\,\scal{N}{P}.
    \end{align}
    We denote by
    \begin{align*}
        L_R(o)=\set{x\in \Omega \ | \ \ell(x,o)<R}
    \end{align*}
    the \emph{Lorentzian ball}. Notice that, if the graph is only weakly space-like, then the set $\set{L_\rho(o)}_{\rho>0}$ is not a fundamental system of neighbourhoods of $o$ in general, as the graph could have a light segment. 

\subsection{The monotonicity formula}\label{subsec:monotonicity} First we review an integral inequality due to Bartnik and Simon involving the volume density $v \doteq w^{-1} = \sqrt{1 - |Du|^2}$ of $dV_g$ with respect to the Lebesgue measure $dx$. 

\begin{thm}[{\cite[Lemma 2.1]{BartnikSimon1982}}]\label{thm:BS_monotonicity}
    Let $\Omega\subseteq\R^m$ be a domain and let $u\in C^2(\Omega)$ be strictly space-like with bounded mean curvature, $\norm{H}_{L^\infty(\Omega)}\leq\Lambda$. Let $o\in\Omega$ and $R>0$ be such that $L_{2R}(o)\Subset\Omega$. Then there exist constants $\alpha<\tfrac1m$ and $c>0$, depending only on $m$, such that
    \begin{align}\label{eq:BS_2_22}
        c\,\exp\!\big(4(\Lambda^2R^2 + 1)\big)\,v(o)^{{\alpha}{}}
        \geq \frac{1}{R^m}\!\!\int_{L_R(o)}\! v^{{\alpha+1}{}}\,\diff x
          + \frac{1}{R^{m-2}}\!\!\int_{L_R(o)}\! |D^2u|^2\diff x.
    \end{align}
\end{thm}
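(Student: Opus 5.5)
This is Bartnik--Simon's monotonicity inequality. I would prove it by integrating identities on Lorentzian balls against a cutoff in $\ell$, a Jacobi-type argument for a power of $w$, and a Gronwall/Caccioppoli step.

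\emph{Step 1: a differential inequality for $v^{\alpha}$.} From the Jacobi equation \eqref{eq:mink_jacobi}, $\Delta_g w = w|A|^2 + \scal{\nabla u}{\nabla H}$, and the refined Kato inequality $|\nabla w|^2 \le \frac{m-1}{m}\,w^2|A|^2$ (valid up to lower-order terms in $H$), I would compute $\Delta_g w^{-\alpha}$. Since $v=w^{-1}$, for $\alpha<\frac1m$ the good term $w|A|^2$ dominates the term coming from $|\nabla w|^2$. This should give
\[
-\Delta_g v^{\alpha} \ge c_1\, v^{\alpha}|A|^2 - C\Lambda^2 v^{\alpha},
\]
with the $\nabla H$ term moved onto the test function by integrating by parts in Step 2. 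A key point is that $|A|^2 \ge w^2|D^2u|^2$ up to constants, since $\II = wD^2u$ and $g^{-1}\ge \delta$. Hence $v^{\alpha}|A|^2\,dV_g \gtrsim v^{\alpha+1-2}\cdot w^2|D^2u|^2\,dx = v^{\alpha-1}|D^2u|^2\,dx \ge |D^2u|^2\,dx$.

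\emph{Step 2: monotonicity via the Lorentzian distance.} Next I would apply Green's identity \eqref{eq:green_2} with $f=v^{\alpha}$ and a radial function $h=\phi(\ell^2)$. Using \eqref{eq:BS_dist_identities}, $\Delta_g \tfrac12\ell^2 = m + H\scal{N}{P}$ and $|\nabla\ell|^2\ge 1$, I would derive, in the usual way, an ODE inequality in $\sigma$ for
\[
J(\sigma)=\sigma^{-m}\int_{L_\sigma(o)} v^{\alpha}\,dV_g .
\]
The errors involve $H\scal{N}{P}$, bounded by $\Lambda\ell\,w$-type quantities, and $\nabla H$, which does not exist a priori. The derivative of $H$ must therefore be transferred by integrating \eqref{eq:Rm_weak_graph}-type identities back onto $\phi$ and $f$, so that only $\Lambda$ appears. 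A Gronwall argument then yields $J(R)\le e^{C(\Lambda^2R^2+1)}\lim_{\sigma\to0}J(\sigma)$. The limit equals $\omega_m v(o)^{\alpha}$ times the normalization, since $dV_g$-volume of $L_\sigma$ is asymptotically Euclidean at the smooth point $o$.

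\emph{Step 3: conclusion.} Since $dV_g=v\,dx$, we have $\int_{L_R} v^{\alpha}dV_g=\int_{L_R} v^{\alpha+1}dx$, which gives the first term. For the Hessian term I would use Caccioppoli on the inequality from Step 1, with a cutoff $\eta(\ell)$ supported in $L_{2R}$ and $|\nabla\eta|\lesssim R^{-1}|\nabla\ell|$. This bounds $R^{2-m}\int_{L_R}|D^2u|^2$ by $R^{-m}\int_{L_{2R}} v^{\alpha}|\nabla\ell|^2dV_g$ plus $\Lambda^2$ terms. Both are controlled by the monotone quantity at scale $2R$, which is why the hypothesis $L_{2R}\Subset\Omega$ is needed.

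\emph{Main obstacle.} The hardest point is the factor $|\nabla\ell|^2=1+\ell^{-2}\scal{N}{P}^2$, which is unbounded where the graph tilts. I would handle it as Bartnik--Simon do: choose the test function so that the cross term $\ell^{-2}\scal{N}{P}^2$ appears with a favourable sign in the monotonicity identity. It then becomes part of the derivative of $J$ and need not be estimated.
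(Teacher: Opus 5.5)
The gap is in Step 2, in how you handle the mean-curvature errors. The rest of your plan matches the paper: the Jacobi equation for $v^\alpha$, Green's identity against a function of $\ell$, moving $\nabla H$ off via \eqref{eq:Rm_weak_graph}, and an integrating factor. Your two local variants are also sound. The eigenvalue bound $|\nabla w|^2=|A\nabla u|^2\le(\tfrac{m-1}{m}+\eps)w^2|A|^2+C_\eps w^2H^2$ legitimately replaces the paper's coordinate computation of $\Delta_g v^\alpha$. A Caccioppoli estimate with a cutoff $\eta(\ell)$ can replace the paper's Fubini argument with the weight $S_{2R}(\ell)$.

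Bounding $H\scal{N}{P}$ by ``$\Lambda\ell w$-type quantities'' is not justified. The ratio $\ell^{-1}|\scal{N}{P}|=(|\nabla\ell|^2-1)^{1/2}$ is exactly the quantity you call unbounded, and $w$ does not control it. For instance, at a point $x$ where the graph is horizontal but nearly null-separated from $o$, one has $w(x)=1$ while $|\scal{N}{P}|=|u(x)-u(o)|\approx|x-o|\gg\ell(x)$. More decisively, even granting the bound, the error $\sigma^{-m-1}\int_{L_\sigma}v^\alpha\,\Lambda\ell w\,dV_g\le\Lambda\sigma^{-m}\int_{L_\sigma}v^{\alpha-1}\,dV_g$ carries one power of $v$ less than $J(\sigma)$. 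So Gronwall cannot absorb it where $v\to0$. The term $-Hv^{\alpha+1}\ell\scal{\nabla u}{\nabla\ell}$, produced when you move $\nabla H$ off, has the same defect. Since $|\nabla u|\le v^{-1}$, its best pointwise bound is $|H|v^\alpha\ell|\nabla\ell|$, which again involves $|\nabla\ell|$.

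What is missing is to route these $H$-errors through the same favourable-sign mechanism you reserve for the coarea cross term. Split them with Young's inequality:
\begin{itemize}
\item $H\scal{N}{P}\le\tfrac12\ell^{-2}\scal{N}{P}^2+\tfrac12\rho^2\Lambda^2$ on $L_\rho$;
\item $|H||\nabla\ell|\le\tfrac R2H^2+\tfrac1{2R}|\nabla\ell|^2$ with $R\ge\rho$, so that $\alpha\rho/R<1$.
\end{itemize}
After this, apart from the Hessian term and multiples of $\Psi(\rho)=\rho^{-m}\int_{L_\rho}v^\alpha\,dV_g$, the only remainder is $\rho^{-m-1}\int_{L_\rho}v^\alpha\ell^{-2}\scal{N}{P}^2\,dV_g$, with coefficient at most $1$. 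The Green/coarea identity supplies the favourable term $-\tfrac{d}{d\rho}\int_{L_\rho}v^\alpha\ell^{-m-2}\scal{N}{P}^2\,dV_g$. By \eqref{eq:antonia}, the remainder plus this term equals $-T'(\rho)$, with $T\ge0$ as in \eqref{eq:BS_T} and $T(0^+)=0$. Then $\int_0^R I\,T'\,d\rho\ge0$ for the decreasing integrating factor $I$.

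This also matters for your Step 3. The quantity $R^{-m}\int_{L_{2R}}v^\alpha|\nabla\ell|^2\,dV_g$ in your Caccioppoli bound contains $\ell^{-2}\scal{N}{P}^2$. It is controlled only because the integrated inequality bounds $I(2R)\big(\Psi(2R)+T(2R)\big)$ by $c\,v(o)^\alpha$. So the ``monotone quantity'' you invoke must be $\Psi+T$, not $J$ alone.
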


The absence of light segments is paramount for \cref{thm:BS_monotonicity} to be applicable, as it is equivalent to the existence, for any $o\in\Omega$, of a compactly contained Lorentzian ball $L_R(o)\Subset\Omega$. Let us be a little sloppy and forget for a moment that \cref{thm:BS_monotonicity} only applies to \emph{strictly} space-like to illustrate its heuristic content. It says that, for a space-like $u$ with bounded mean curvature and no light segments, we have
\begin{align}\label{eq:implicazione_sbagliata}
    \text{$u$ is light-like at a point} \quad \Longrightarrow \quad \text{$u$ is lightlike in a neighbourhood of the point}.
\end{align}
If the solution $u$ of the variational \cref{prb:variational_problem} problem does not have light segments, then one can apply \cref{thm:BS_monotonicity} to a sequence of strictly space-like functions approximating $u$ to conclude that the implication \eqref{eq:implicazione_sbagliata} holds for it (cf. Step 4 in \cref{subsect:Rm_proof} and the proof of \cite[Theorem 4.1]{BartnikSimon1982}). This would suffice to conclude that it is uniformly space-like, for if it goes null at a point it would go null on a neighbourhood of the point, contradicting the absence of light segments.

Our contribution is noticing that, for space-like graphs with free boundary, the monotonicity formula holds at any $o \in \overline\Omega$, without requiring the Lorentzian ball to be compactly contained in $\Omega$.

\begin{lem}\label{lem:BS_monotonicity}
    Let $\Omega\subseteq\R^m$ be a $C^2$, convex domain and let $u\in C^2(\overline\Omega)$ be strictly space-like and free boundary (i.e. $Du \cdot n \equiv 0$) with bounded mean curvature, $\norm{H_u}_{L^\infty(\Omega)}\leq\Lambda$. Let $o\in\overline\Omega$ and for any $R>0$ let $E_R \doteq L_R(o) \cap \Omega$. Then there exist constants $\alpha<\tfrac1m$ and $c>0$, depending only on $m$, such that
    \begin{align}\label{eq:mia_monotonia}
        c\,\exp\!\big(4(\Lambda^2R^2 + 1)\big)\,v(o)^{{\alpha}{}}
        \geq \frac{1}{R^m}\!\!\int_{E_R}\! v^{{\alpha+1}{}}\,\diff x
          + \frac{1}{R^{m-2}}\!\!\int_{E_R}\! |D^2u|^2\diff x.
    \end{align}
\end{lem}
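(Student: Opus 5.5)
The plan is to rerun the proof of \cref{thm:BS_monotonicity} from \cite[Lemma 2.1]{BartnikSimon1982}, replacing the compactly contained ball $L_{2R}(o)$ by $E_R = L_R(o)\cap\Omega$. Every integration by parts then produces boundary integrals over $\del\Omega\cap\overline{E_R}$, and the task is to show that each of them has a favourable sign. This sign control is where the free boundary condition and the convexity of $\Omega$ enter.

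\emph{Structure of the Bartnik--Simon argument.} The argument combines two ingredients.
\begin{enumerate}[label=(\roman*)]
    \item A monotonicity identity for the volume of Lorentzian balls. One integrates $\Delta_g(\tfrac12\ell^2) = m + H\scal{N}{P}$ from \eqref{eq:BS_dist_identities} against a cutoff $\phi(\ell)$ using \eqref{eq:green_1}. This yields a differential inequality in $R$ for $R^{-m}\int_{L_R}\phi(\ell/R)\,w^{\beta}\,dV_g$. The error terms involving $H$ are controlled by $\Lambda$, which is the origin of the factor $\exp(4(\Lambda^2R^2+1))$.
    \item The Jacobi equation \eqref{eq:mink_jacobi}, $\Delta_g w = w|A|^2 + \scal{\nabla u}{\nabla H}$. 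This is used in a subsolution/mean value inequality for a negative power $w^{-\alpha} = v^\alpha$, and it produces the $|A|^2 \gtrsim |D^2u|^2$ term.
\end{enumerate}
I will follow these steps verbatim, tracking the boundary terms that appear in \eqref{eq:green_1}, \eqref{eq:green_2} on $\del E_R\cap\del\Omega$. The terms on $\del L_R$ inside $\Omega$ are handled exactly as before, since $\phi$ vanishes there. The $\scal{\nabla u}{\nabla H}$ term is integrated by parts via \eqref{eq:Rm_weak_graph}; with $\gamma=0$ the boundary term $\int_{\del\Omega} fhw\tanh\gamma$ vanishes, so this step goes through unchanged.

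\emph{Boundary terms from the distance function.} By \eqref{Rm:conormal} with $\psi=0$, we have $\nu = n$ on $\del\Omega$. Writing $P = (u-u(o),\,x-o)$, one computes
\begin{align*}
    \scal{\nabla\tfrac12\ell^2}{\nu} = (x-o)\cdot n - (u-u(o))\,Du\cdot n = (x-o)\cdot n \geq 0,
\end{align*}
where the last inequality holds by convexity, since $o\in\overline\Omega$. When this term is multiplied by $\phi'(\ell)\le0$ and a positive weight, it enters with the sign that only strengthens the monotonicity inequality, so it can be discarded.

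\emph{Boundary terms from the tilt function.} For the terms involving $\scal{\nabla w}{\nu}$, \eqref{eq:Rm_gradients_w} gives $\scal{\nabla w}{\nu} = w^3 D^2u(Du,n)$ up to the metric factor. Differentiating $Du\cdot n = 0$ tangentially along $\del\Omega$ in the tangential direction $Du$ gives
\begin{align*}
    D^2u(Du,n) = -\II_{\del\Omega}(Du,Du) \le 0,
\end{align*}
again by convexity. Hence $\del_\nu w\le 0$ on $\del\Omega$, and likewise $\del_\nu(w^{-\alpha})\ge 0$. I expect this to be exactly the sign needed for $v^\alpha$, which is the superharmonic-type quantity appearing in the mean value step: the boundary flux $\int f\,\del_\nu v^\alpha$ with $f\ge0$ should be bounded above by $0$ in the inequality $\int \phi\,\Delta v^{\alpha}\le\dots$, mirroring a Neumann mean value inequality on convex domains.

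\emph{Expected obstacle.} The main obstacle is verifying the last sign claim consistently: in each Green identity used (notably the one pairing $\Delta_g v^\alpha$ with the test function $\phi(\ell)$, and the one pairing $\Delta_g\ell^2$ with $v^\alpha$), the boundary contribution must enter with a sign that can be dropped. If the sign comes out wrong in \eqref{eq:green_2}, I would first try to rearrange the argument so that only \eqref{eq:green_1} is used, with test functions of the form $\phi(\ell)v^{\alpha}$. In that form the boundary term becomes $\phi v^\alpha\cdot\del_\nu(\tfrac12\ell^2)\phi'/\phi$ plus $\phi\,\del_\nu v^\alpha$, and both signs are determined by the two computations above.
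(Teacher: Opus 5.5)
Your proposal is correct and follows essentially the same route as the paper: you rerun Bartnik--Simon on $E_\rho=L_\rho(o)\cap\Omega$, and you discard the extra boundary terms on $L_\rho\cap\del\Omega$ coming from \eqref{eq:green_2} (with $f=v^\alpha$, $h=\tfrac12(\rho^2-\ell^2)$) using exactly your two computations, $\ell\,\scal{\nabla\ell}{\nu}=(x-o)\cdot n\ge 0$ and $\scal{\nabla v^\alpha}{\nu}=\alpha w^{2-\alpha}\II_{\del\Omega}(Du,Du)\ge 0$, while the boundary term in \eqref{eq:Rm_weak_graph} vanishes because $\gamma=0$. The sign consistency you flagged does work out, since both terms enter as $\int_{L_\rho\cap\del\Omega}\scal{f\ell\nabla\ell+h\nabla f}{\nu}\ge 0$ on the side that can be dropped, and the only other change is that for $o\in\del\Omega$ the limit of $\rho^{-m}\int_{E_\rho}v^\alpha\,dV_g$ is $\tfrac{\omega_m}{2}v(o)^\alpha$, which is absorbed into $c$.
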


The proof is a modification of that of \cref{thm:BS_monotonicity}, we include full details for completeness.

\begin{proof}[Proof of \cref{lem:BS_monotonicity}]
    Throughout, $c$ denotes a positive constant depending only on $m$, possibly changing from line to line. Without loss of generality $o$ is the zero vector and $u(o)=0$. We will prove \eqref{eq:mia_monotonia} assuming $u\in C^3(\Omega)$, the general case follows by approximation. Call simply $L_\rho = L_\rho(o)$ and $\del L_\rho = \Omega \cap \del L_\rho(o)$.
    
    \smallskip\noindent
    \emph{Step 1: integrate by parts and discard boundary terms.}
    Let $0\leq f\in C^2(\overline{\Omega})$ and let $\sigma = \abs{\nabla\ell}^{-1}\nabla\ell$ be the outer unit conormal of $\del L_\rho$ in $\Sigma$. Inserting $f$ and $h=\tfrac12(\rho^2-\ell^2)$ into Green's identity \eqref{eq:green_2} and
    using~\eqref{eq:BS_dist_identities}, one obtains
    \begin{align}\label{eq:step_one_monotonicity}
        \int_{E_\rho}\!\Big(m f + \tfrac12(\rho^2-\ell^2)\Delta_g f
            + fH\scal{N}{P}\Big)\diff V_g
        &= \int_{\del L_\rho}\! f\,\ell\,\abs{\nabla\ell}\, d\Haus_g^{m-1} \\ &\quad + \int_{L_\rho \cap \del \Omega} \scal{f\ell\nabla \ell +\tfrac{1}{2}(\rho^2 - \ell^2)\nabla f}{\nu} \, d\Haus_g^{m-1}.\nonumber
    \end{align}
    
    We show that the second contribution in the right-hand side is non-negative and can be discarded. Call $X = x - o$ the position vector field on $\R^m$ centred at $o$. Notice that free boundary condition implies $Du\cdot n = 0$ and $\ell D\ell \cdot n = X \cdot n$ along $\del\Omega$. Recalling \eqref{Rm:conormal} we have
    \begin{align*}
        \scal{\nabla\ell}{\nu} = g^{ij}\ell_in_j = \delta^{ij}\ell_i n_j + w^2 u^i u^j \ell_i n_j = D\ell\cdot n = \ell^{-1} X\cdot n \geq 0
    \end{align*}
    where we used that $\Omega$ is convex at the last inequality.

    Choose $f = v^\alpha$ for some $\alpha>0$ small, so that $Df = Dv^\alpha = Dw^{-\alpha} = -\alpha w^{-\alpha - 1} Dw = - \alpha w^{2-\alpha}D^2u(Du)$. Now, taking the derivative of $Du \cdot n \equiv 0$ along $Du$ gives
    \begin{align}\label{eq:derive_free_boundary}
        0 = D_{Du}Du \cdot n + Du \cdot D_{Du}n = D^2u(Du,n) + \II_{\del\Omega}(Du,Du),
    \end{align}
    where $\II_{\del\Omega}$ is the second fundamental form of $\del\Omega$, hence
    \begin{align*}
        \scal{\nabla v^\alpha}{\nu} = Dv^\alpha\cdot n = - \alpha w^{2 - \alpha}D^2u(Du,n) = \alpha w^{2-\alpha}\II_{\del\Omega}(Du,Du) \geq 0,
    \end{align*}
    again, because the domain is convex. Hence \eqref{eq:step_one_monotonicity} becomes
    \begin{align*}
        \int_{E_\rho}\!\Big(m v^\alpha + \tfrac12(\rho^2-\ell^2)\Delta_g v^\alpha
            + v^\alpha H\scal{N}{P}\Big)\diff V_g
        \geq \int_{\del L_\rho}\! v^\alpha\,\ell\,\abs{\nabla\ell}\,\diff \Haus_g^{m-1}
    \end{align*}
    with equality in case $L_\rho \Subset \Omega$.
    
    Combining this with the coarea formula
    \begin{align*}
        \frac{\diff}{\diff\rho}\left(\int_{E_\rho}f\,\diff V_g\right) =\int_{\del L_\rho} f\,\abs{\nabla\ell}^{-1}\,\diff \Haus_g^{m-1}
    \end{align*}
    and differentiating $\tfrac{1}{\rho^m}\int_{E_\rho}f\,\diff V_g$ yields the
    {monotonicity formula}
    \begin{align}\label{eq:BS_monotonicity}
        \frac{\diff}{\diff\rho}\Big( \frac{1}{\rho^m}\!\!\int_{E_\rho}\!v^\alpha\,\diff V_g\Big)
        &\leq \frac{1}{\rho^{m+1}}\!\!\int_{E_\rho}\!\Big(\tfrac12(\rho^2-\ell^2)\Delta_g v^\alpha
            + v^\alpha H\scal{N}{P}\Big)\, \diff V_g \nonumber
          \\ &\quad - \frac{\diff}{\diff\rho}\Big(\!\int_{E_\rho}\! v^\alpha\,\frac{\scal{N}{P}^2}{\ell^{m+2}}\diff V_g\Big).
    \end{align}
    Since $u$ is $C^2$ and strictly space-like, $\scal{N}{P}=O(\abs{x}^2)$
    as $x\to 0$, so the last term tends to $0$ as $\rho\downarrow0$; and
    if $f$ is continuous at $o$ then
    $\rho^{-m}\int_{E_\rho}f\,\diff V_g\to\omega_m f(o)$, with
    $\omega_m$ the volume of the unit ball in $\R^m$, if $o \in \Omega$; the limit halves if $o \in \del\Omega$. From now on the proof is identical to that given by Bartnik and Simon.
    
    \smallskip
    \noindent\emph{Step 2: the Laplacian of $f=v^\alpha$.} By the Jacobi equation \eqref{eq:mink_jacobi} one readily deduces
    \begin{align}\label{eq:BS_jacobi_v}
        \Delta_g v &= -v\,\abs{A}^2 + \frac{2}{v}\,\abs{\nabla v}^2
            - v^2\,\scal{\nabla u}{\nabla H}
    \end{align}
    and, for $\alpha>0$,
    \begin{align*}
        \Delta_g v^\alpha
        &= \alpha v^{\alpha-1}\Delta_g v
            + \alpha(\alpha-1)v^{\alpha-2}\abs{\nabla v}^2 \nonumber\\
        &= -\alpha v^\alpha\abs{A}^2
            + \alpha(\alpha+1)v^{\alpha-2}\abs{\nabla v}^2
            - \alpha v^{\alpha+1}\scal{\nabla u}{\nabla H}.
    \end{align*}
    i.e. 
    \begin{align}\label{eq:BS_lapl_vgamma}
        \Delta_g v^\alpha = \alpha v^\alpha\Big[-|A|^2+(\alpha+1)\Big(v^{-2}|\nabla v|^2+H\langle\nabla u,\nabla v\rangle\Big)\Big]-\alpha\langle\nabla u,\nabla(Hv^{\alpha+1})\rangle
    \end{align}
    By \eqref{eq:RM_second_and_mean} $\II = v^{-1}D^2u$ hence
    \begin{align*}
        |A|^2 = v^{-2}|D^2u|^2 + 2v^{-4} |D^2u(Du,\cdot)|^2 + v^{-6}D^2u(Du,Du)^2.
    \end{align*}
    On the other hand since $\nabla v = -v^2A\nabla u$ we have
    \begin{align*}
        \scal{\nabla u}{\nabla v} = - v^{-3}D^2u(Du,Du), \qquad v^{-2}|\nabla v|^2 = v^{-4}|D^2u(Du)|^2 + v^{-6}D^2u(Du,Du)^2
    \end{align*}
    thus the term $B$ in brackets in \eqref{eq:BS_lapl_vgamma} is
    \begin{align*}
        B  = -v^{-2}|D^2u|^2+(\alpha-1)v^{-4}|D^2u(Du,\cdot)|^2+\alpha v^{-6}D^2u(Du,Du)^2-(\alpha+1)v^{-3}H\,D^2u(Du,Du).
    \end{align*}
    Since $\alpha < 1$ the second term can be discarded while recalling the mean curvature equation $H = \div(wDu) = w\Delta u + w^3D^2u(Du,Du)$ we can write this as
    \begin{align*}
        B &\leq -v^{-2}|D^2u|^2 + \alpha v^{-6}D^2u(Du,Du)^2-(\alpha+1)v^{-3}H D^2u(Du,Du) \\
        & = - v^{-2}|D^2u|^2 - H^2 + (1 - \alpha)Hv^{-1}\Delta u + \alpha v^{-2}(\Delta u)^2 \\
        &\leq (m(\alpha + \eps) - 1)v^{-2}|D^2u|^2 + \left(\tfrac{(1 - \alpha)^2}{4\eps} - 1 \right) H^2
    \end{align*}
    where we used $(\Delta u)^2 \leq m|D^2u|^2$ and Young's inequality. Thus, there exist constants $c$ and $\alpha<\tfrac{1}{m}$ depending only on $m$ such that
    \begin{align*}
        \Delta_g v^\alpha \leq - c v^{\alpha-2}|D^2u|^2 + \tfrac{1}{4} v^\alpha H^2 - \alpha \scal{\nabla u}{\nabla(H v^{\alpha + 1})}.
    \end{align*}
    % Impose $\eps = \tfrac{(1 - \alpha)^2}{5}$, then find $0<\alpha<1/m$ small enough so that $\eps$ exists and is positive.
    Plugging this in the monotonicity formula  \eqref{eq:BS_monotonicity} shows that the function
    \begin{align*}
        \Psi(\rho)\doteq \tfrac{1}{\rho^m}\int_{E_\rho} v^\alpha \, dV_g
    \end{align*}
    satisfies
    \begin{align}\nonumber
        \Psi'(\rho)
        &\leq - \frac{c}{\rho^{m+1}}\!\!\int_{E_\rho}\tfrac12(\rho^2-\ell^2)\,v^{\alpha-2}|D^2u|^2 \, dV_g \, + \\ &\quad +\frac{1}{\rho^{m+1}}\int_{E_\rho}\!\tfrac12(\rho^2 - \ell^2)\Big(\tfrac{1}{4}\,v^\alpha H^2-\alpha\scal{\nabla u}{\nabla(Hv^{\alpha+1})} \Big) \, dV_g \nonumber \\
        &\quad + \frac{1}{\rho^{m+1}} \int_{E_\rho} v^\alpha H\scal{N}{P}\diff V_g - \frac{\diff}{\diff\rho}\Big(\!\int_{E_\rho}\! v^\alpha\,\frac{\scal{N}{P}^2}{\ell^{m+2}}\diff V_g\Big).\label{eq:polifemo}
    \end{align}

    \smallskip\noindent
    \textit{Step 3: the terms involving $H$.} Apply \eqref{eq:Rm_weak_graph} with $h = \tfrac{1}{2}(\rho^2 - \ell^2)$ and $f = Hv^{\alpha + 1}$:
    \begin{align}\label{eq:anastasia}
        -\int_{E_\rho}\tfrac12(\rho^2-\ell^2)\scal{\nabla u}{\nabla(Hv^{\alpha+1})}\,dV_g
        &= \int_{E_\rho}\tfrac12(\rho^2-\ell^2)v^\alpha H^2\,dV_g \, + \\
        &\quad -\int_{E_\rho}Hv^{\alpha+1}\ell\,\scal{\nabla u}{\nabla\ell}\,dV_g. \nonumber
    \end{align}
    For the cross term, Cauchy--Schwarz and $|\nabla u|^2=w^2-1<v^{-2}$ give $\abs{\scal{\nabla u}{\nabla\ell}}\leq v^{-1}\abs{\nabla\ell}$, so that, for any $R\geq\rho$, Young's inequality $H|\nabla \ell|\leq \tfrac12RH^2 + \tfrac{1}{2R}|\nabla\ell|^2$ and $\rho^2 - \ell^2 \leq \rho^2$ yield
    \begin{align*}
        -\frac\alpha2\int_{E_\rho}(\rho^2-\ell^2)\scal{\nabla u}{\nabla(Hv^{\alpha+1})}\,dV_g
        &\leq \frac\alpha2 \int_{E_\rho} \rho v^\alpha \left (\tfrac{1}{R}|\nabla\ell|^2 + (\rho + R)H^2 \right) \, dV_g \\
        &= \frac\alpha2 \int_{E_\rho} \rho v^\alpha \left (\tfrac{1}{R} + \tfrac1R\tfrac{\scal NP^2}{\ell^2} + (\rho + R)H^2 \right) \, dV_g \\
        &< \frac{1}{2} \int_{E_\rho} v^\alpha\frac{\scal NP^2}{\ell^2} \, dV_g + \frac{\alpha\rho}{2}\left(\tfrac1R + (\rho + R)\Lambda^2 \right) \int_{E_\rho} v^\alpha \, dV_g \\
        \int_{E_\rho} v^\alpha H \scal{N}{P} \, dV_g &\leq \frac{1}{2} \int_{E_\rho} v^\alpha \frac{\scal{N}{P}^2}{\ell^2} + \frac{1}{2} \int_{E_\rho}v^\alpha \ell^2H^2 \, dV_g \\
        &\leq \frac{1}{2} \int_{E_\rho} v^\alpha \frac{\scal NP^2}{\ell^2} \, dV_g + \frac{\rho^2}{2} \int_{E_\rho}v^\alpha H^2 \, dV_g
    \end{align*}
    Summing up and estimating $|H|\leq \Lambda$, integrals featuring the mean curvature in \eqref{eq:polifemo} are bounded by
    \begin{align*}
        \left( \tfrac18\Lambda^2\rho + \tfrac{\alpha}{2R} + \tfrac12(\rho + R)\alpha\Lambda^2 + \tfrac12 \Lambda^2\rho \right) \Psi(\rho) + \int_{E_\rho} v^\alpha \frac{\scal{N}{P}^2}{\ell^2} \, dV_g.
    \end{align*}
    Since $\alpha<\tfrac1m$ we have $\tfrac18 + \tfrac\alpha2 + \tfrac12 \leq 1$ and inequality \eqref{eq:polifemo} becomes
    \begin{align}
        \Psi'(\rho) - \left(\Lambda^2\rho + \tfrac{\alpha}{2R} + \tfrac R2\alpha\Lambda^2 \right) \Psi(\rho) &\leq
        - \frac{c}{\rho^{m+1}}\!\!\int_{E_\rho}\tfrac12(\rho^2-\ell^2)\,v^{\alpha-2}|D^2u|^2 \, dV_g \, + \nonumber \\ 
        &\quad + \frac{1}{\rho^{m+1}} \int_{E_\rho} v^\alpha\frac{\scal{N}{P}^2}{\ell^2} \, dV_g - \frac{\diff}{\diff\rho}\Big(\!\int_{E_\rho}\! v^\alpha\,\frac{\scal{N}{P}^2}{\ell^{m+2}}\diff V_g\Big). \label{eq:polifemo2}
    \end{align}

    \smallskip\noindent
    \textit{Step 4: Integrate the differential inequality.} Next, we notice that the last two terms are a derivative. By a straightforward application of the coarea formula
    \begin{align}\label{eq:antonia}
        f'(\rho) \int_{E_\rho} h \, dV_g=\frac{\diff}{\diff\rho}\int_{E_\rho}\left( f(\rho) - f(\ell) \right) h \, dV_g.
    \end{align}
    Applying this to $f(\rho)=-\tfrac1m\tfrac{1}{\rho^m}$ and $h = v^\alpha\tfrac{\scal{N}{P}^2}{\ell^2}$ yields
    \begin{align*}
        \frac{1}{\rho^{m+1}} \int_{E_\rho} v^\alpha\frac{\scal{N}{P}^2}{\ell^2} \, dV_g  = \frac{\diff}{\diff\rho} \int_{E_\rho} \left(-\frac{1}{m}\frac{1}{\rho^m} + \frac{1}{m}\frac{1}{\ell^m} \right) v^\alpha \frac{\scal NP^2}{\ell^2} \, dV_g.
    \end{align*}
    Hence letting
    \begin{align}\label{eq:BS_T}
        T(\rho) \doteq \Big(1-\frac1m\Big)\int_{E_\rho}v^\alpha\,\frac{\scal{N}{P}^2}{\ell^{m+2}}\,dV_g + \frac1m\,\rho^{-m}\int_{E_\rho}v^\alpha\,\frac{\scal{N}{P}^2}{\ell^{2}}\,dV_g \;\geq 0,
    \end{align}
    and inequality \eqref{eq:polifemo2} takes the form
    \begin{align}\label{eq:BS_diffineq}
        \Psi'(\rho) - \Big(\Lambda^2\rho+\frac\alpha{2R}+\frac R2\alpha\Lambda^2\Big)\Psi(\rho) \leq -\frac{c}{\rho^{m+1}}\int_{E_\rho}\tfrac12(\rho^2-\ell^2)\,v^{\alpha-2}\abs{D^2u}^2\,dV_g - T'(\rho).
    \end{align}
    Introduce the integrating factor
    \begin{align*}
        I(\rho) = \exp\Big(-\tfrac12\Lambda^2\rho^2-\tfrac\alpha{2R}\rho-\tfrac{\alpha\Lambda^2R}2\rho\Big),
    \end{align*}
    so that \eqref{eq:BS_diffineq} becomes
    \begin{align*}
        (I\psi)'(\rho) &\leq -\frac{c \, I(\rho)}{\rho^{m+1}}\int_{E_\rho}\tfrac12(\rho^2-\ell^2)v^{\alpha-2}\abs{D^2u}^2\,dV_g - I(\rho)T'(\rho) \\ 
        &\leq -\frac{c \, I(R)}{\rho^{m+1}}\int_{E_\rho}\tfrac12(\rho^2-\ell^2)v^{\alpha-2}\abs{D^2u}^2\,dV_g - I(\rho)T'(\rho)
    \end{align*}
    because $I$ is decreasing, $I(\rho)\geq I(R)$ for $\rho\in(0,R)$.
    
    Next we integrate in $\rho$ from $0$ to $R$. First notice that we can discard the last term
    \begin{align*}
        \int_0^R I(\rho)T'(\rho) \, d\rho = I(R)T(R) - I(0)T(0) - \int_0^R I'(\rho) T(\rho) \, d\rho \geq 0.
    \end{align*}
    Regarding the first term, we use Fubini's theorem: for fixed $x\in E_R$ with $\ell=\ell(x)$, the condition $x\in E_\rho$ (i.e.\ $\ell<\rho$) forces $\rho$ to range over $(\ell,R)$, so that
    \begin{align*}
        c\,I(R)\int_0^R\rho^{-m-1}\!\int_{E_\rho}\!\tfrac12(\rho^2-\ell^2)v^{\alpha-2}\abs{D^2u}^2\,dV_g\,d\rho
        = c\,I(R)\int_{E_R}\!S_R(\ell)\,v^{\alpha-2}\abs{D^2u}^2\,dV_g,
    \end{align*}
    where
    \begin{align*}
        S_R(\ell)\doteq\int_\ell^R\!\tfrac12\rho^{-m-1}(\rho^2-\ell^2)\,d\rho.
    \end{align*}
    Finally, since 
    \begin{align*}
        I(\rho)\Psi(\rho) \to c_m v(o)^\alpha, \qquad T(\rho) \to 0, \qquad \rho\to 0,
    \end{align*}
    where $c_m=\omega_m$ if $o \in \Omega$ and $c_m = \tfrac{\omega_m}2$ if $o \in \del\Omega$, integrating the left-hand-side gives $I(R)\psi(R) - c_mv(o)^\alpha$. Summing up
    \begin{align}\label{eq:BS_pre_2_22_bis}
        c_m\,v^\alpha(o) \geq \frac{I(R)}{R^m}\!\int_{E_R}\!v^\alpha\,dV_g + c\,I(R)\int_{E_R}\!S_R(\ell)\,v^{\alpha-2}\abs{D^2u}^2\,dV_g.
    \end{align}

    \smallskip\noindent
    \textit{Step 5: conclusion.} We apply \eqref{eq:BS_pre_2_22_bis} with $2R$ in place of $R$, which is legitimate since the parameter in Young's inequality in Step 3 was only required to exceed $\rho$. A direct computation of the integral defining $S_{2R}$ gives, for $m\geq3$,
    \begin{align}\label{eq:BS_SR}
        S_{2R}(\ell) = \frac1{m(m-2)}\,\ell^{2-m}
            + \frac1{2m}\,\frac{\ell^2}{(2R)^m}
            - \frac1{2(m-2)}\,(2R)^{2-m},
    \end{align}
    while
    \begin{align*}
        S_{2R}(\ell)=\tfrac12\log(2R/\ell)-\tfrac14\big(1-\ell^2/(2R)^2\big)
    \end{align*}
    for $m=2$. In either case $S_{2R}(\ell)>0$ for $\ell<2R$ and, in particular,
    \begin{align*}
        S_{2R}(\ell) \geq c\,R^{2-m} \qquad \text{for } \ell < R.
    \end{align*}
    Discarding in \eqref{eq:BS_pre_2_22_bis} the contribution of $L_{2R}\setminus E_R$ and estimating $(2R)^{-m}\geq c\,R^{-m}$, we obtain
\begin{align*}
    c_m\,v^\alpha(o)
    \geq \frac{c\,I(2R)}{R^m}\int_{E_R}\! v^\alpha\,\diff V_g
        + \frac{c\,I(2R)}{R^{m-2}}\int_{E_R}\! v^{\alpha-2}\abs{D^2u}^2\,\diff V_g.
\end{align*}
We now convert both integrals to Lebesgue measure: since $\diff V_g = v\,\diff x$,
\begin{align*}
    \int_{E_R}\! v^\alpha\,\diff V_g = \int_{E_R}\! v^{\alpha+1}\,\diff x,
    \qquad
    \int_{E_R}\! v^{\alpha-2}\abs{D^2u}^2\,\diff V_g
        = \int_{E_R}\! v^{\alpha-1}\abs{D^2u}^2\,\diff x
        \geq \int_{E_R}\! \abs{D^2u}^2\,\diff x,
\end{align*}
where the last inequality follows from $v\leq1$ and $\alpha<1$. Finally, since
$\alpha<1$,
\begin{align*}
    I(2R)^{-1}
    = \exp\Big(2\Lambda^2R^2 + \alpha + \alpha\Lambda^2R^2\Big)
    \leq \exp\big(4(\Lambda^2R^2+1)\big),
\end{align*}
and \eqref{eq:mia_monotonia} follows.
\end{proof}

\subsection{Gradient estimates for the capillary problem} Here we establish an \emph{a priori} bound on the gradient of a classical solution to the homogeneous capillary problem on a strictly convex domain. Unfortunately, at the moment we are only able to treat the homogeneous condition $\psi = 0$: for general bounded capillary data the boundary step of the argument below breaks down. The technique exploited to obtain such gradient estimate is a Bernstein-type argument and is essentially due to Bartnik (see in particular \cite{Bartnik88}).

\begin{thm}\label{thm:capillary_grad_est}
    Assume $\Omega\subseteq\R^m$ is a bounded, strictly convex domain with $C^2$ boundary. Let $u\in C^2(\overline\Omega)\cap C^3({\Omega})$ be a strictly space-like function whose mean curvature satisfies $\norm{H}_{C^1(\Omega)} \leq \Lambda$ for some $\Lambda\geq 0$ and such that $\psi \doteq wDu \cdot n = 0$ on $\del\Omega$, where $n$ is the exterior unit normal of $\Omega$ and $w = (1 - |Du|^2)^{-1/2}$. Then there exists $\theta = \theta(\Omega,\Lambda,m) \in (0,1)$ such that
    \begin{align*}
        \norm{Du}_{L^\infty(\Omega)} \leq 1 - \theta.
    \end{align*}
    More precisely, there is $\lambda = \lambda(m,\Lambda) > 0$ such that $\norm{w}_{L^\infty(\Omega)} \leq 2e^{\lambda\diam\Omega}$.
\end{thm}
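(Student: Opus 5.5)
We run a Bernstein-type maximum principle argument on an auxiliary function built from the tilt $w$ and a spatial weight. The natural choice is
\begin{align*}
    Q \doteq w\, e^{\lambda \phi}, \qquad \text{with } \phi(x) \doteq x\cdot e \text{ or } \phi = u,
\end{align*}
and we commit to $Q = w\,e^{\lambda u}$. Since $u$ is $1$-Lipschitz with zero oscillation control $\operatorname{osc}u \leq \diam\Omega$, a bound $Q(x_0)\leq 2$ at a maximum point $x_0$ gives $w \leq 2e^{\lambda\diam\Omega}$ everywhere, which is the stated estimate.

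\emph{Step 1: the maximum cannot be on $\del\Omega$.} Suppose $x_0\in\del\Omega$. Then $D_n Q(x_0)\geq 0$. Since $Du\cdot n = 0$ there, we get $D_n u = 0$, and $D_n w = w^3 D^2u(Du,n)$ by \eqref{eq:Rm_gradients_w}. Differentiating the boundary condition along the tangential vector $Du$, exactly as in \eqref{eq:derive_free_boundary}, gives
\begin{align*}
    D^2u(Du,n) = -\II_{\del\Omega}(Du,Du).
\end{align*}
Hence
\begin{align*}
    0 \leq D_nQ = -e^{\lambda u}\, w^3\, \II_{\del\Omega}(Du,Du).
\end{align*}
Strict convexity gives $\II_{\del\Omega}\geq \kappa>0$, so $Du(x_0)=0$, i.e.\ $w(x_0)=1$ and $Q$ is tiny there. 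Alternatively, one can use a Hopf-type perturbation to exclude a boundary maximum unless $w=1$. This is exactly where $\psi=0$ and convexity are used.

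\emph{Step 2: interior maximum.} At an interior maximum we have $\nabla Q=0$, i.e.\ $\nabla w = -\lambda w\nabla u$, and $\Delta_g Q\leq 0$. We expand using the Jacobi equation \eqref{eq:mink_jacobi} and $\Delta_g u = wH$:
\begin{align*}
    \frac{\Delta_g Q}{Q} = \frac{\Delta_g w}{w} + 2\lambda\frac{\scal{\nabla w}{\nabla u}}{w} + \lambda\Delta_g u + \lambda^2|\nabla u|^2 .
\end{align*}
Inserting the gradient condition gives
\begin{align*}
    \frac{\Delta_g Q}{Q} = |A|^2 + \frac{\scal{\nabla u}{\nabla H}}{w} - \lambda^2|\nabla u|^2 + \lambda wH .
\end{align*}
We then bound $|A|^2$ from below using $\nabla w = A\nabla u$. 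This yields
\begin{align*}
    |A|^2 \geq \frac{|\nabla w|^2}{|\nabla u|^2} = \lambda^2 w^2,
\end{align*}
which beats the term $\lambda^2|\nabla u|^2 = \lambda^2(w^2-1)$ by $\lambda^2$.

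The remaining terms are controlled by $\norm{H}_{C^1}\leq\Lambda$. We have $|\nabla H|\leq w\,|DH|\leq w\Lambda$ and $|\nabla u|\leq w$, so $|\scal{\nabla u}{\nabla H}|/w \lesssim w\Lambda$. The term $\lambda wH$ contributes at most $\lambda w\Lambda$. This leads to
\begin{align*}
    0 \geq \lambda^2 - C\Lambda w - \lambda\Lambda w ,
\end{align*}
which unfortunately goes the wrong way for large $w$.

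\emph{Step 3: the main obstacle, the linear-in-$w$ error terms.} We need a term of order $w^2$. The step I expect to be hardest is exactly this, and there are two ways to fix it. One is to refine the lower bound: use the Kato-type inequality $|A|^2 \geq (1+\tfrac1m)|\nabla w|^2/|\nabla u|^2$ minus $H$-terms, which gives a gain of order $\tfrac{\lambda^2}{m} w^2$. The other is to replace $u$ by the weight $\phi = u + \mu|x|^2$-type. In either case we choose $\lambda = \lambda(m,\Lambda)$ large enough so that the quadratic gain absorbs $C(\Lambda+\lambda\Lambda)w$ once $w\geq 2$. The conclusion is $w(x_0)\leq 2$, hence $Q\leq 2e^{\lambda\sup u}$, and the estimate follows with $\theta$ from $w\leq 2e^{\lambda\diam\Omega}$.
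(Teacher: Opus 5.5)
Your proposal is correct and follows the paper's proof: the same auxiliary function $we^{\lambda u}$, the same exclusion of a boundary maximum by differentiating $Du\cdot n=0$ along $Du$ and invoking strict convexity, and, through your option (a), the same decisive gain from the trace constraint $\tr_g A=H$. The paper obtains that gain by noting that at the critical point $\nabla u$ is an eigenvector of $A$ with eigenvalue $-\lambda w$, and then applying Cauchy--Schwarz to the remaining eigenvalues, which gives $|A|^2\geq\lambda^2w^2+(H+\lambda w)^2/(m-1)$; the alternative weight $u+\mu|x|^2$ is unnecessary, and you should state and prove this refined inequality explicitly, since it is the only nontrivial step.
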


\begin{proof}
    Let $f = we^{\lambda u}$ for some $\lambda\in\R$ to be chosen later and let $x \in \overline\Omega$ be its maximum point. By maximality we have
    \begin{align}\label{eq:pilade}
        w \leq w(x)\,e^{\lambda(u(x) - u)} \qquad \text{in $\Omega$},
    \end{align}
    and since by space-likeness of $u$ we have $u(x) - u \leq \diam\Omega$, it suffices to prove that $w(x) \leq 2$ to conclude. Notice that if $Du(x) = 0$ then $w(x) = 1 \leq 2$, so we will assume $Du(x) \neq 0$ throughout. We will first prove that the maximum $x$ cannot lie on $\del\Omega$ and then that one can choose $\lambda = \lambda(\Lambda,m)$ so that $w(x) \leq 2$ if $x\in\Omega$.

    \textit{Step 1: the maximum cannot occur at $\del\Omega$.} First note that, since $\psi = 0$ and $u$ is strictly space-like, the normal derivative $Du \cdot n$ vanishes identically, hence $Du$ is tangent to $\del\Omega$. Assume by contradiction that $x \in \del\Omega$. Then, since $Du\cdot n = 0$ on $\del\Omega$,
    \begin{align}\label{eq:afaleo}
        0 \leq n(we^{\lambda u})
        = e^{\lambda u} \left( w^3D^2u(Du,n) + \lambda w\, Du \cdot n \right)
        = e^{\lambda u} w^3 D^2u(Du,n)
    \end{align}
    at $x$. Now, taking the derivative of $Du \cdot n \equiv 0$ along the tangent direction $Du$ gives
    \begin{align}\label{eq:derive_neumann_bordo}
        0 = D_{Du}Du \cdot n + Du \cdot D_{Du}n
        = D^2u(Du,n) + \II_{\del\Omega}(Du,Du).
    \end{align}
    Combining this with \eqref{eq:afaleo} we obtain
    \begin{align*}
        0 \leq D^2u(Du,n) = - \II_{\del\Omega} (Du,Du) < 0,
    \end{align*}
    where the last inequality is the strict convexity of $\Omega$ together with $Du(x) \neq 0$: a contradiction.

    \textit{Step 2: interior estimate at the maximum point.} By Step~1, $x$ is an interior point, so $\nabla f = 0$ and $\Delta_g f \leq 0$ at $x$. Expanding and dividing by $e^{\lambda u} > 0$, these read
    \begin{gather}
        \nabla w = -\lambda w\nabla u, \label{eq:castore}\\
        \Delta_g w - \lambda^2 w\abs{\nabla u}^2 + \lambda w\Delta_g u \leq 0.
        \label{eq:polluce}
    \end{gather}
    Recalling the Jacobi equation $\Delta_g w = w|A|^2 + \scal{\nabla u}{\nabla H}$ in \eqref{eq:mink_jacobi} and $\Delta_g u = Hw$, the inequality \eqref{eq:polluce} becomes
    \begin{align}\label{eq:emone}
        w|A|^2 - \lambda^2 w|\nabla u|^2
        \ \leq\ -\scal{\nabla u}{\nabla H} - \lambda H w^2.
    \end{align}
    We need to reabsorb the gradient term $-\lambda^2w|\nabla u|^2$ into the second fundamental form. The key is that, by \eqref{eq:castore} and $\nabla w = A\nabla u$ (see \eqref{eq:Rm_gradients_w}), the vector $\nabla u$ is an eigenvector of $A$ with eigenvalue $\mu \doteq -\lambda w$ at $x$ (we already observed that we can assume $\nabla u \neq 0$ there), and that, since $H = \tr_g A$ is prescribed and bounded, a single eigenvalue cannot blow up without the others compensating.
    
    Precisely, assuming $\nabla u (x) \neq 0$, if $\mu = \mu_1,\mu_2,\dots,\mu_m$ are the eigenvalues of $A$, then $\sum_{j\geq2}\mu_j = H + \lambda w$ and, by the Cauchy--Schwarz inequality,
    \begin{align*}
        |A|^2 = \sum_{j=1}^m \mu_j^2
        \ \geq\ \lambda^2 w^2 + \frac{(H + \lambda w)^2}{m-1}.
    \end{align*}
    Hence, recalling $|\nabla u|^2 = w^2 - 1$,
    \begin{align*}
        w|A|^2 - \lambda^2 w|\nabla u|^2
        \ \geq\ \lambda^2w^3 + \frac{w(H + \lambda w)^2}{m - 1}
        - \lambda^2 w^3 + \lambda^2w
        \ \geq\ \frac{w(H + \lambda w)^2}{m - 1}.
    \end{align*}
    By Young's inequality, $(H + \lambda w)^2 \geq \tfrac{1}{2}\lambda^2 w^2 - H^2$, so that
    \begin{align}\label{eq:oreste}
        w|A|^2 - \lambda^2 w|\nabla u|^2
        \ \geq\ \frac{\lambda^2w^3}{2(m - 1)} - \frac{wH^2}{m-1}.
    \end{align}
    On the other hand, since $|\nabla u| \leq w$ and $|\nabla H| \leq w|DH|$, the right-hand side of \eqref{eq:emone} is bounded by
    \begin{align}\label{eq:elettra}
        -\scal{\nabla u}{\nabla H} - \lambda H w^2
        \ \leq\ w^2|DH| + \lambda|H|w^2
        \ \leq\ (1 + \lambda)\Lambda\, w^2.
    \end{align}
    Plugging \eqref{eq:oreste} and \eqref{eq:elettra} into \eqref{eq:emone}, using $w \geq 1$ and $\tfrac1{m-1}\leq1$, we arrive at
    \begin{align*}
        \frac{\lambda^2 w(x)}{2(m - 1)}
        \ \leq\ (1 + \lambda)\Lambda + \Lambda^2,
    \end{align*}
    that is, $w(x) \leq 2(m-1)\lambda^{-2}\big((1+\lambda)\Lambda + \Lambda^2\big)$. Since the right-hand side is infinitesimal as $\lambda \to \infty$, we can choose $\lambda = \lambda(m,\Lambda)$ so large that $w(x) \leq 2$.

    In all cases, then, \eqref{eq:pilade} holds with $w(x) \leq 2$, and since $u$ is weakly space-like, $u(x) - u \leq \diam\Omega$, whence
    \begin{align*}
        \norm{w}_{L^\infty(\Omega)} \ \leq\ C
        \doteq 2e^{\lambda\diam\Omega}.
    \end{align*}
    Recalling $w = (1-|Du|^2)^{-1/2}$, this reads $\norm{Du}_{L^\infty(\Omega)} \leq 1-\theta$ with $\theta \doteq 1 - \sqrt{1 - C^{-2}} \in (0,1)$, depending only on $m$, $\Lambda$ and $\diam\Omega$.
\end{proof}

% ================================================

\section{Existence theorems}\label{sec:existence}

\subsection{Existence of classical solutions} This paragraph is devoted to proving the following existence result.

\begin{thm}\label{thm:Rm_capillary_existence}
    Let $\Omega \subseteq \R^m$ be a smooth, bounded, strictly convex domain, and let
    \begin{align*}
        \rho \in C^{\infty}(\overline\Omega)
    \end{align*}
    with $\int_\Omega \rho \, dx = 0$. Then the problem \eqref{eq:problem_Rm} admits a strictly space-like classical solution
    $u \in C^\infty(\overline\Omega)$, with
    \begin{align}\label{eq:Rm_existence_bounds}
        \int_\Omega u \, dx = 0, \qquad
        \norm{Du}_{L^\infty(\Omega)} \leq 1 - \theta, \qquad
        \norm{u}_{C^{2,\lambda}(\overline\Omega)} \leq C,
    \end{align}
    where $\lambda\in(0,1)$, $\theta = \theta(m,\Omega,\norm{\rho}_{C^1(\overline\Omega)})$ and
    $C = C(m,\lambda,\Omega,\norm{\rho}_{C^1(\overline\Omega)})$. This solution is unique in the class $\mathring A(\Omega)$ of zero-mean functions and coincides with the maximizer $u_{\rho,0}$ of $I_{\rho,0}$ in
    $\mathring A(\Omega)$ given by \cref{lem:existence_maximizer}.
\end{thm}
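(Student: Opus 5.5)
The plan is a continuity (or Leray--Schauder) argument for the oblique derivative problem, following Lieberman's theory \cite{Lieberman88,Lieberman2013}. The a priori gradient bound of \cref{thm:capillary_grad_est} makes the operator uniformly elliptic, so the usual Hölder estimates close the argument. Since the problem has no zeroth-order term, solutions are determined only up to constants, and the compatibility condition must be respected along the deformation. I would consider the family
\begin{align*}
    -\div\big(w_u Du\big) = t\rho \quad \text{in $\Omega$}, \qquad w_u Du\cdot n = 0 \quad \text{on $\del\Omega$}, \qquad t\in[0,1].
\end{align*}
Each $t\rho$ has zero mean and $\norm{t\rho}_{C^1}\leq\norm{\rho}_{C^1}$. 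To make the problem uniquely solvable and Fredholm, I would either work in the space of zero-mean functions, or add a small regularizing term $\eps u$ (so the equation reads $-\div(w_uDu)+\eps u = t\rho$) and let $\eps\to0$ at the end after subtracting means. At $t=0$ the solution is $u\equiv0$.

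The a priori estimates come in three layers.
\begin{enumerate}[label=(\roman*)]
    \item For any strictly space-like $C^2(\overline\Omega)\cap C^3(\Omega)$ solution along the path, \cref{thm:capillary_grad_est} with $H=-t\rho$ and $\Lambda=\norm{\rho}_{C^1}$ gives $|Du|\leq1-\theta$, uniformly in $t$. In the $\eps$-version, one must check that the extra term does not spoil the Bernstein argument. Testing the maximum-point inequalities, $H$ becomes $\eps u - t\rho$, whose gradient is still controlled once $|u|\leq\diam\Omega$ (by the Lipschitz bound plus an estimate on the mean, or directly by the maximum principle applied to $\eps u$).
    \item With $|Du|\leq1-\theta$, the equation is uniformly elliptic, with smooth coefficients depending on $Du$, and the boundary condition $Du\cdot n=0$ is a linear, uniformly oblique (indeed Neumann) condition. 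Lieberman's estimates then give $C^{1,\beta}$ bounds, and Schauder theory for oblique problems upgrades these to $C^{2,\lambda}(\overline\Omega)$ bounds depending on $m,\Omega,\theta,\norm{\rho}_{C^1}$.
    \item Bootstrapping gives $C^\infty(\overline\Omega)$ since $\rho$ and $\del\Omega$ are smooth.
\end{enumerate}
The Leray--Schauder fixed point theorem, or the method of continuity (using invertibility of the linearized Neumann operator on zero-mean functions), then yields a solution for $t=1$. Normalizing by subtracting its mean gives $\int_\Omega u=0$.

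For uniqueness and the identification with the maximizer: the classical solution is uniformly space-like, so it is a weak solution in the sense of \eqref{eq:energy}--\eqref{eq:Rm_weak_sol}. By \cref{rmk:solutions_are_max} it maximizes $I_{\rho,0}$ over $\mathring A(\Omega)$, and \cref{lem:existence_maximizer} gives uniqueness, so it coincides with $u_{\rho,0}$. Any other zero-mean classical solution is also such a maximizer, hence equal to it.

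The main obstacle is the functional-analytic setup of the continuity method in the presence of the pure Neumann condition and the kernel of constants. The smallest-effort route is to invoke Lieberman's existence theorem for quasilinear oblique problems (e.g. \cite[Theorem 8.3 or Ch.~8]{Lieberman2013}) with the $\eps u$ regularization, which provides a structure condition for free. I would then need to verify that the gradient bound of \cref{thm:capillary_grad_est} survives uniformly in $\eps$ and pass $\eps\to0$ via the $C^{2,\lambda}$ bounds and Arzel\`a--Ascoli, subtracting means.
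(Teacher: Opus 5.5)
Your proposal is correct and follows essentially the paper's route. The paper runs an $\eps u$-regularized Leray--Schauder argument. It freezes the coefficients $a^{ij}(Dv)$ and applies a fixed-point criterion on a bounded convex set of $C^{1,\lambda}$ functions with $|Dv|\leq 1-\theta/2$, rather than quoting Lieberman's quasilinear existence theorem. Integrating the equation gives zero mean and hence $|u|\leq\diam\Omega$, so $\norm{H_u}_{C^1}$ is bounded uniformly in $t$ and $\eps$. Then \cref{thm:capillary_grad_est}, \cref{thm:Rm_lieberman_adapted} and Schauder estimates give $\eps$-uniform $C^{2,\lambda}$ bounds. Finally the paper lets $\eps\to0$ and identifies the limit with the maximizer via \cref{rmk:solutions_are_max} and \cref{lem:existence_maximizer}, exactly as you propose.
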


The proof is a fixed-point argument. Consider the convex closed subset of $C^{1,\lambda}(\overline\Omega)$
\begin{align*}
    \Q \doteq \Big\{ \, v \in C^{1,\lambda}(\overline{\Omega}) \, \ | \ \,  \norm{v}_{C^{1,\lambda}(\overline\Omega)} \leq K, \quad \norm{Dv}_{L^\infty(\Omega)} \leq 1 - \tfrac{\theta}{2} \, \Big\}
\end{align*}
with $\theta,\lambda\in(0,1)$ and $K >0$ to be chosen later. For every $\eps\in(0,1]$ and $v \in \Q$ consider the linear problem
\begin{align}\label{eq:Rm_frozen_problem}
    \begin{cases}
        a^{ij}(Dv)\,D_{ij}u - \eps u = -\rho &\qquad \text{in $\Omega$} \\[4pt]
        Du \cdot n = 0 &\qquad \text{on $\del\Omega$.}
    \end{cases}
\end{align}
where
\begin{align*}
    a^{ij}(p) = w_p \left(\delta^{ij} + w_p^2 p^ip^j \right), \qquad w_p \doteq \frac{1}{\sqrt{1 - |p|^2}}, \qquad |p| < 1.
\end{align*}
We think of \eqref{eq:Rm_frozen_problem} as a prescribed mean curvature problem where the coefficients of the mean curvature operator have been frozen at $v$. Consider the operator
\begin{align}\label{eq:T_eps}
    T_\eps: \Q \longrightarrow C^{1,\lambda}(\overline\Omega)
\end{align}
that maps every $v \in \Q$ to the unique solution to \eqref{eq:Rm_frozen_problem}. Notice that a fixed point $u$ of $T_\eps$ would be a function whose mean curvature is $\eps u - \rho$.

\begin{lem}\label{lem:Rm_Teps_wellposed}
    Let $\Omega \subseteq \R^m$ a $C^{2,\lambda}$-regular, bounded and convex domain, let $\rho \in C^{0,\lambda}(\overline\Omega)$ and $\eps \in (0,1]$. Then the operator $T_\eps$ in \eqref{eq:T_eps} is a well defined, continuous and compact operator and
    \begin{align}\label{eq:Rm_Teps_bound}
        \sup_{v \in \Q}\ \norm{T_\eps v}_{C^{2,\lambda}(\overline\Omega)}
        \ \leq\ C\left(1 + \frac1\eps\right)\norm{\rho}_{C^{0,\lambda}(\overline\Omega)},
    \end{align}
    with $C = C(m,\lambda,\theta,K,\Omega)$.
\end{lem}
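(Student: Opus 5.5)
The plan is to treat \eqref{eq:Rm_frozen_problem} as a linear uniformly elliptic oblique (Neumann) problem with Hölder coefficients and invoke Schauder theory for such problems, as in Gilbarg--Trudinger or \cite{Lieberman2013}. For $v\in\Q$ we have $|Dv|\le 1-\theta/2$, hence $w_{Dv}\le C(\theta)$. The matrix $a^{ij}(Dv)$ is therefore symmetric and uniformly elliptic, with ellipticity constants depending only on $\theta$. Moreover $\norm{a^{ij}(Dv)}_{C^{0,\lambda}}\le C(\theta,K)$, because $p\mapsto a^{ij}(p)$ is smooth on $\{|p|\le 1-\theta/2\}$ and $\norm{Dv}_{C^{0,\lambda}}\le K$. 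The zero-order coefficient $-\eps<0$ and the boundary operator $Du\cdot n$ is strictly oblique. Since $\del\Omega\in C^{2,\lambda}$, the standard existence and uniqueness theorem for the oblique derivative problem (e.g.\ Gilbarg--Trudinger, Theorem 6.31) gives a unique $u\in C^{2,\lambda}(\overline\Omega)$. So $T_\eps$ is well defined, and it maps into $C^{2,\lambda}\subset C^{1,\lambda}$.

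For the bound \eqref{eq:Rm_Teps_bound} I would first derive an $L^\infty$ estimate by the maximum principle. At a positive maximum $x_0$ of $u$ there are two cases. If $x_0$ is interior, then $a^{ij}D_{ij}u\le 0$, so $\eps u(x_0)\le \rho(x_0)$. If $x_0\in\del\Omega$, the Hopf lemma forces $Du\cdot n>0$ unless $u$ is constant, and this contradicts the boundary condition; for constant $u$ the equation gives $\eps u=\rho$ directly. The same argument at a minimum yields $\norm{u}_{L^\infty}\le \eps^{-1}\norm{\rho}_{L^\infty}$. The global Schauder estimate for oblique problems then gives
\begin{align*}
\norm{u}_{C^{2,\lambda}}\le C\big(\norm{u}_{L^\infty}+\norm{\rho}_{C^{0,\lambda}}\big)\le C\big(1+\tfrac1\eps\big)\norm{\rho}_{C^{0,\lambda}},
\end{align*}
where $C$ depends only on $m,\lambda,\Omega$, the ellipticity constants and the Hölder norms of the coefficients, hence on $m,\lambda,\theta,K,\Omega$.

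Compactness follows because $T_\eps(\Q)$ is bounded in $C^{2,\lambda}$, and this space embeds compactly into $C^{1,\lambda}$ by Ascoli--Arzelà.

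For continuity, take $v_k\to v$ in $C^{1,\lambda}$ with $v_k\in\Q$, and set $u_k=T_\eps v_k$. By the uniform bound, any subsequence has a further subsequence with $u_k\to \bar u$ in $C^{2}$ (indeed in $C^{2,\lambda'}$ for $\lambda'<\lambda$). Passing to the limit in the equation, where $a^{ij}(Dv_k)\to a^{ij}(Dv)$ uniformly, and in the boundary condition shows that $\bar u$ solves the problem with coefficients frozen at $v$. Uniqueness then gives $\bar u=T_\eps v$. Since every subsequence has a further subsequence converging to the same limit, the whole sequence converges in $C^{1,\lambda}$.

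The main obstacle I expect is making sure the cited Schauder theory for oblique problems applies with constants depending only on the stated quantities. The delicate points are that $C^{0,\lambda}$ coefficients and a $C^{2,\lambda}$ boundary suffice, and that the $L^\infty$ bound is really needed: because $\eps>0$ there is no kernel, but the constant must degenerate like $1/\eps$. If the reference only provides the estimate with $\norm{u}_{L^\infty}$ on the right-hand side, the maximum-principle step above supplies exactly that term.
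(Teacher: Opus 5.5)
Your proposal is correct and follows the paper's proof essentially step by step. The steps match: uniform ellipticity and $C^{0,\lambda}$ control of $a^{ij}(Dv)$ on $\Q$; the maximum-principle bound $\norm{u}_{L^\infty(\Omega)}\le\eps^{-1}\norm{\rho}_{L^\infty(\Omega)}$ fed into the oblique Schauder estimate; compactness from the uniform $C^{2,\lambda}$ bound; and continuity via subsequences plus uniqueness.

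The main difference is in how existence is obtained. The paper uses its own \cref{lem:Rm_linear_wellposed}, a continuity method starting from $\Delta-\eps$ with Neumann data. That is the careful route, since Gilbarg--Trudinger's Theorem~6.31 is stated under $\gamma\beta_\nu>0$ rather than for $\gamma=0$, $c=-\eps<0$. Separately, your Hopf step should be run on $u-\eps^{-1}\norm{\rho}_{L^\infty(\Omega)}$, which satisfies $Lh\ge 0$, assuming its maximum is positive; Hopf does not apply at an arbitrary positive maximum of $u$, because $Lu=-\rho$ has no sign.
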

\begin{proof}
    By definition of $\Q$, the coefficients of the linear operator $L_v \doteq a^{ij}(v)\del_{ij}^2 - \eps$ are $C^{0,\lambda}$ and its principal symbol is uniformly elliptic: its eigenvalues are $w_p$ with multiplicity $m-1$ and $w_p^3$ and $|Dv| \leq 1 - \tfrac{\theta}{2}$ implies
    \begin{align*}
        |p|^2 \leq a^{ij}(Dv) p_ip_j \leq \Mu_\theta |p|^2, \qquad \Mu_\theta = \left(\theta\left(1 - \tfrac{\theta}{4} \right) \right)^{-3/2}, \qquad p \in\R^m,
    \end{align*}
    for every $v\in\Q$. Then by \cref{lem:Rm_linear_wellposed}, for every $\eps$ there exists a unique $C^{2,\lambda}$ solution to \eqref{eq:Rm_frozen_problem}, hence $T_\eps$ is well defined. The estimate \eqref{eq:Rm_Teps_bound} follows from the second in \eqref{eq:Rm_linear_estimates}.
    
    The image $T_\eps(\Q)$ is bounded in $C^{2,\lambda}(\overline\Omega)$ and thus precompact in $C^{2,\alpha}(\overline\Omega)$ for every $\alpha<\lambda$, so if $T_\eps$ is continuous, it is also compact. To prove continuity, consider a sequence $v_j \to v$ in $C^{1,\lambda}(\overline\Omega)$ with $v_j, v \in\Q$. Since $T_\eps(\set{v_j})$ is pre-compact in $C^{2,\alpha}$, up to a subsequence $T_\eps v_j\doteq u_j \to u$ in $C^{2,\alpha}(\overline\Omega)$ for some $u$. Since $a(Dv_j) \to a(Dv)$ uniformly on $\overline{\Omega}$, passing to the limit in the equations solved by $u_j$ shows that $u$ is a solution to \eqref{eq:Rm_frozen_problem} with coefficients $a(Dv)$. Since by \cref{lem:Rm_linear_wellposed} the solution is unique, it must be $u = T_\eps v$. Hence, every converging subsequence of $u_j$ has the same limit, so it must be $T_\eps v_j \to T_\eps v$ in $C^{2,\alpha}(\overline{\Omega})$.
\end{proof}

We want to establish the existence of a fixed point. We will use the following criterion, which follows from \cite[Theorem 4.4.3]{LloydDegree}.

\begin{lem}\label{lem:Rm_leray_schauder}
    {Let $B$ be a Banach space and $\Q \subseteq B$
    a bounded, closed convex set with $0 \in \inte \Q$.
    Let $T: \Q \to B$ be a continuous and compact operator.
    If, for every $t \in (0,1]$ every solution $u \in \Q$ of $u = t T u$ belongs to $\inte  \Q$, then $T$
    has a fixed point in $\Q$.}
\end{lem}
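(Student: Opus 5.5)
This is the Leray--Schauder criterion in the form of a degree argument, and I will prove it by reducing it to Schauder's fixed point theorem through a radial retraction onto $\Q$. Let $\mu:B\to[1,\infty)$ be the Minkowski functional of $\Q$,
\begin{align*}
    \mu(x)\doteq\inf\set{s>0 \ | \ x\in s\Q}.
\end{align*}
Since $0\in\inte\Q$ and $\Q$ is bounded, convex and closed, $\mu$ is a continuous, positively homogeneous, sublinear functional. Moreover $\Q=\set{\mu\leq1}$, and one checks that $\inte\Q=\set{\mu<1}$. The retraction is $r(x)\doteq x/\max(1,\mu(x))$, which is continuous from $B$ onto $\Q$ and equals the identity on $\Q$.

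Consider $S\doteq r\circ T:\Q\to\Q$. It is continuous. It is also compact: $T(\Q)$ is relatively compact, $r$ is continuous, and the image of a relatively compact set under a continuous map is relatively compact. Since $\Q$ is a nonempty closed convex subset of a Banach space, Schauder's theorem (in the version for compact maps of a closed convex set into itself) gives a fixed point $u=S u\in\Q$.

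If $\mu(Tu)\leq1$, then $r(Tu)=Tu$, so $u=Tu$ and we are done. Otherwise set $t\doteq1/\mu(Tu)\in(0,1)$. Then $u=tTu$, and by homogeneity $\mu(u)=t\,\mu(Tu)=1$, so $u\in\del\Q$. This contradicts the hypothesis that every solution of $u=tTu$ with $t\in(0,1]$ lies in $\inte\Q$. Hence the first case must occur and $T$ has a fixed point in $\Q$.

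The main technical point is the identification $\inte\Q=\set{\mu<1}$ together with the continuity of $\mu$. Both follow from $B_\delta(0)\subseteq\Q$: this gives $\mu(x)\leq\norm{x}/\delta$, hence $|\mu(x)-\mu(y)|\leq\norm{x-y}/\delta$ by sublinearity. Convexity then shows that points with $\mu(x)=1$ are limits of the points $(1+\eps)x\notin\Q$, so they lie on the boundary. One can also simply invoke \cite[Theorem 4.4.3]{LloydDegree}, i.e. homotopy invariance of the Leray--Schauder degree for $I-tT$ on $\inte\Q$, which is admissible because no fixed points lie on $\del\Q$ and the degree at $t=0$ equals $1$ since $0\in\inte\Q$.
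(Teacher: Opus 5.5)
Your proof is correct, but it takes a different route from the paper. The paper gives no argument beyond citing \cite[Theorem 4.4.3]{LloydDegree}, i.e.\ homotopy invariance of the Leray--Schauder degree of $I-tT$ on $\inte\Q$; you mention this only as an alternative in your last sentence.

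Your main argument instead reduces the statement to Schauder's fixed point theorem through the gauge retraction $r(x)=x/\max(1,\mu(x))$, and all its steps hold:
\begin{itemize}
\item $B_\delta(0)\subseteq\Q$ gives the Lipschitz bound $|\mu(x)-\mu(y)|\le\norm{x-y}/\delta$, so $\mu$ is continuous;
\item closedness gives $\Q=\{\mu\le 1\}$, and the points $(1+\eps)x\notin\Q$ give $\inte\Q=\{\mu<1\}$;
\item $r\circ T$ is compact because $r(\overline{T(\Q)})$ is compact.
\end{itemize}
The only slip is cosmetic: $\mu$ takes values in $[0,\infty)$, not $[1,\infty)$, since $\mu(0)=0$.

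Your route is elementary, with no degree theory. It also shows that the hypothesis is only needed for $t\in(0,1)$, since a fixed point of $T$ lying on $\del\Q$ is still a fixed point.

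The degree route does not need convexity of $\Q$: any bounded open neighbourhood of $0$ works, because the degree at $t=0$ equals $1$. It also yields $\deg(I-T,\inte\Q,0)=1$, which is stable under small compact perturbations.

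Since the set $\Q$ in \cref{lem:fixed_point} is convex, nothing is lost by using your approach in this application.
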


The gradient estimate \cref{thm:capillary_grad_est} and Lieberman's  \cref{thm:Rm_lieberman_adapted} in \cref{sub:oblique} provide us with good choices of the parameters that define $\Q$ so that \cref{lem:Rm_leray_schauder} applies.

\begin{lem}\label{lem:fixed_point}
    Assume we take
    \begin{enumerate}
        \item $\theta = \theta(\Omega,\Lambda,m)$ given by \cref{thm:capillary_grad_est} when $\Lambda \doteq 1 + \diam\Omega
         + \norm{\rho}_{C^1(\overline\Omega)}$;
        \item\label{item:Rm_choice_K1} $\lambda \doteq \tfrac\beta2$, where $\beta = \beta(m,\theta) \in (0,1)$ is the H\"older exponent given by \cref{thm:Rm_lieberman_adapted};
        \item $K \doteq 1 + \left(1 + (\diam\Omega)^{\lambda} \right)C_L$ where $C_L = C_L(m,\theta,\Lambda,\Omega)$ is given by \cref{thm:Rm_lieberman_adapted}.
        % e $C_L = C_L(m,\theta,\Lambda,\Omega) > 0$ le costanti della stima conormaledi Lieberman \cref{thm:Rm_lieberman_adapted}: ogni soluzione $u \in C^2(\overline\Omega)$ di un problema in forma di divergenza $\div\big(A(Du)\big) = B$ in $\Omega$, $A(Du)\cdot n = 0$ su $\del\Omega$, con $A(p) = w_p\,p$ uniformemente ellittico su $\set{|p| \leq 1 - \theta/2}$, $\norm{B}_{L^\infty(\Omega)} \leq \Lambda$ e $\norm{u}_{L^\infty(\Omega)} \leq \diam\Omega$, soddisfa
        % \begin{align*}
        %     \norm{u}_{C^{1,\beta}(\overline\Omega)} \leq C_L.
        % \end{align*}
        % \item\label{item:Rm_choice_lambda} Fissiamo infine $\lambda \doteq \min\big(\tfrac\beta2, \lambda_0\big)$, dove $\del\Omega \in C^{2,\lambda_0}$, e
        % \begin{align*}
        %     K_1 \doteq 1 + \big(1 + (\diam\Omega)^{\beta - \lambda}\big)\,C_L,
        % \end{align*}
        % cosicché $\norm{u}_{C^{1,\beta}} \leq C_L$ implichi $\norm{u}_{C^{1,\lambda}} \leq K_1 - 1$.
    \end{enumerate}
    and consider the bounded, closed, convex subset of $C^{1,\lambda}(\overline\Omega)$
    \begin{align*}
        \Q \doteq \Big\{ \, v \in C^{1,\lambda}(\overline{\Omega}) \, \ | \ \,  \norm{v}_{C^{1,\lambda}(\overline\Omega)} \leq K, \quad \norm{Dv}_{L^\infty(\Omega)} \leq 1 - \tfrac{\theta}{2} \, \Big\}.
    \end{align*}
    Then for every $\eps\in(0,1]$ the operator $T_\eps: \Q \to C^{1,\lambda}(\overline\Omega)$ has a fixed point in $\Q$. This fixed point is smooth in $\Omega$ and has zero mean.
\end{lem}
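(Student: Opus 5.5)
We apply \cref{lem:Rm_leray_schauder} with $B=C^{1,\lambda}(\overline\Omega)$, the set $\Q$ of the statement and $T=T_\eps$. The set $\Q$ is bounded, closed and convex, and $0\in\inte\Q$ because $K>0$ and $1-\theta/2>0$. Continuity and compactness of $T_\eps$ are given by \cref{lem:Rm_Teps_wellposed}. It therefore remains to show that, for $t\in(0,1]$, every $u\in\Q$ with $u=tT_\eps u$ lies in $\inte\Q$, i.e. $\norm{u}_{C^{1,\lambda}}<K$ and $\norm{Du}_{L^\infty}<1-\theta/2$.

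Fix such $u$. By linearity, $u$ solves $a^{ij}(Du)D_{ij}u-\eps u=-t\rho$ in $\Omega$ with $Du\cdot n=0$ on $\del\Omega$. Since $a^{ij}(p)D_{ij}u=\div(w_pDu)$ when $p=Du$, this says $H_u=\eps u-t\rho$. The regularity of $u$ comes from $u\in C^{2,\lambda}$ (image of $T_\eps$) and bootstrapping with Schauder theory, using smooth $\rho$ and $\del\Omega$, to get $u\in C^\infty(\overline\Omega)$. This also gives the smoothness claim, since fixed points correspond to $t=1$.

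\emph{Zero mean.} Integrate the divergence-form equation: $\int_\Omega \div(w_uDu)=\int_{\del\Omega}w_uDu\cdot n=0$, so $\eps\int_\Omega u=t\int_\Omega\rho=0$, hence $u$ has zero mean. Since $|Du|\le 1-\theta/2<1$ and $u$ vanishes somewhere, $\norm{u}_{L^\infty}\le\diam\Omega$.

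\emph{Gradient bound.} We get $\norm{H_u}_{C^1}\le\eps\norm{u}_{C^1}+\norm{\rho}_{C^1}\le\diam\Omega+1+\norm{\rho}_{C^1}=\Lambda$, using $\eps\le1$ and $\norm{u}_{C^1}\le \diam\Omega+1$, with some care in the sup-norm convention for $\norm{\cdot}_{C^1}$. Then \cref{thm:capillary_grad_est} applies, since $u$ is strictly space-like, free boundary, and $\Omega$ is strictly convex. It gives $\norm{Du}_\infty\le1-\theta<1-\theta/2$.

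\emph{H\"older bound.} With $|Du|\le1-\theta$, $\norm{u}_\infty\le\diam\Omega$ and $|\eps u-t\rho|\le\Lambda$, Lieberman's conormal estimate \cref{thm:Rm_lieberman_adapted} yields $\norm{u}_{C^{1,\beta}}\le C_L$. Interpolating with $\lambda=\beta/2$ gives $[Du]_{\lambda}\le(\diam\Omega)^{\beta-\lambda}[Du]_\beta$, so $\norm{u}_{C^{1,\lambda}}\le(1+(\diam\Omega)^\lambda)C_L<K$. Hence $u\in\inte\Q$, and \cref{lem:Rm_leray_schauder} gives the fixed point.

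The main obstacle I expect is the bookkeeping that makes all the constants independent of $t$ and $\eps$. In particular, the $C^1$ bound on $H_u=\eps u-t\rho$ needed by \cref{thm:capillary_grad_est} uses $|Du|<1$, which must itself be established before the theorem is invoked. Here we get it from membership in $\Q$. A second point is that \cref{thm:capillary_grad_est} needs $u\in C^3(\Omega)$, which is why the smoothness bootstrap must be done first.
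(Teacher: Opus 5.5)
Your proposal is correct and follows the paper's proof essentially step by step. Both arguments apply the Leray--Schauder criterion \cref{lem:Rm_leray_schauder}, get zero mean by integrating the divergence-form equation, bound $H_u=\eps u-t\rho$ in $C^1$ by $\Lambda$ so that \cref{thm:capillary_grad_est} applies, and then use Lieberman's estimate with the interpolation $\lambda=\beta/2$ to show that $u\in\inte\Q$. The only cosmetic difference is that you bootstrap regularity up to $C^\infty(\overline\Omega)$, whereas the paper uses interior Schauder smoothness plus $u\in C^2(\overline\Omega)$, which is all \cref{thm:capillary_grad_est} needs.
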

\begin{proof}
    If $u\in\Q$ satisfies $u = tTu$ for $t\in(0,1]$, then it is a solution of 
    \begin{align}\label{eq:eps_fixed_point}
        a^{ij}(Du)u_{ij} = \eps u - t\rho \quad \text{in $\Omega$}, \qquad Du \cdot n = 0 \quad \text{on $\del\Omega$}.
    \end{align}
    Geometrically, the graph of $u$ is a free boundary strictly space-like hypersurface with mean curvature $H_u = \eps u - t\rho$. First notice that $u$ has zero mean:
    \begin{align*}
        0 = \int_{\del\Omega} Du\cdot n \, d\Haus^{m-1} = \int_\Omega \div\left( \tfrac{Du}{\sqrt{1 - |Du|^2}}\right) \, dx = \eps\int_\Omega u \, dx - t\int_\Omega \rho \, dx = \eps \int_\Omega u \, dx.
    \end{align*}
    In particular, since $u$ is space-like and $\Omega$ is convex we have
    \begin{align*}
        \norm{u}_{L^\infty(\Omega)} \leq \diam\Omega.
    \end{align*}

    We want to show that $u \in \inte\Q$. The gradient bound will follow from \cref{thm:capillary_grad_est}. Notice that
    \begin{align*}
        \norm{H_u}_{C^1(\overline\Omega)} \leq \eps \left(\norm{Du}_{L^\infty(\Omega)} + \norm{u}_{L^\infty(\Omega)}\right) + t\norm{\rho}_{C^1(\overline\Omega)} \leq 1 + \diam\Omega +\norm{\rho}_{C^1(\overline\Omega)} \doteq \Lambda.
    \end{align*}
    Moreover, $u \in C^\infty(\Omega)$: since $u \in C^{1,\lambda}(\overline{\Omega})$ the coefficients of \eqref{eq:eps_fixed_point} are $C^{0,\lambda}$ and since $u \in \Q$ the equation is uniformly elliptic, hence the interior Schauder bootstrap iterates thanks to $\rho \in C^\infty(\overline\Omega)$ (cf. \cite[Theorem 6.17]{GilbargTrudinger2001}). Moreover, by construction $u \in C^2(\overline\Omega)$, therefore, by \cref{thm:capillary_grad_est} 
    \begin{align}\label{eq:culo}
        \norm{Du}_{L^\infty(\Omega)} \leq 1 - \theta \leq 1 - \tfrac\theta2
    \end{align}
     by our choice of $\theta$. Finally, by \cref{thm:Rm_lieberman_adapted} we have a H\"older estimate $\norm{u}_{C^{1,\beta}(\overline\Omega)} \leq C_L$, and thus, by our choice of $K$,
    \begin{align}\label{eq:camicia}
        \norm{u}_{C^{1,\lambda}(\overline\Omega)}\leq K - 1.
    \end{align}
    Since \eqref{eq:culo} and \eqref{eq:camicia} hold uniformly in $t$, this shows that every $u$ such that $u = tTu$ lies in $\inte \Q$, hence by \cref{lem:Rm_leray_schauder} $T_\eps$ has a fixed point in $\Q$.
\end{proof}

\begin{proof}[Proof of \cref{thm:Rm_capillary_existence}]
    By \cref{lem:fixed_point} for each $T_\eps$ has a fixed point $u_\eps \in \Q$. By definition this solves $H_{u_\eps} = \eps u_\eps - \rho \doteq F_\eps$. By Schauder estimates \cite[Theorem 6.30]{GilbargTrudinger2001} (or \eqref{eq:Rm_schauder_oblique} below)
    \begin{align*}
        \norm{u_\eps}_{C^{2,\lambda}(\overline\Omega)} \leq
        C(m,\lambda,\theta,K,\Omega)
        \big(\diam\Omega + \norm{F_\eps}_{C^{0,\lambda}}\big) \leq C,
    \end{align*}
    uniformly in $\eps$. By Ascoli-Arzelà, up to a subsequence, $u_\eps \to u$ in $C^{2}(\overline\Omega)$ and the limit is itself $C^{2,\lambda}(\overline\Omega)$. A bootstrap argument based on \cite[Exercise 2.3]{Lieberman2013} shows that $u \in C^\infty(\overline{\Omega})$. Moreover the estimate $\norm{Du_\eps}_{L^\infty(\Omega)} \leq 1 - \theta$ and $\int_\Omega u_\eps = 0$ pass to the limit, as well as the boundary condition $Du_\eps \cdot n = 0$. Now, since $F_\eps \to - \rho$ uniformly as $\eps \to 0$, the limit is a solution to \eqref{eq:problem_Rm}. Finally, by \cref{rmk:solutions_are_max} and the uniqueness in \cref{lem:existence_maximizer}, $u$ is the maximizer of $I_{\rho,0}$ in $\mathring A(\Omega)$
\end{proof}

\subsection{Proof of \cref{thm:Rm_main}}\label{subsect:Rm_proof}

\emph{Step 0: approximating data.} Let $\set{\Omega_j}_j$ be a sequence of smooth, bounded, strictly convex domains such that
\begin{align*}
    \Big\{x \in \Omega \ | \ \dist(x,\del\Omega) > \tfrac1j\Big\} \ \subseteq \
    \Omega_j \ \Subset \ \Omega,
\end{align*}
so that $\Omega_j\nearrow\Omega$. Extend $\rho$ to $\R^m$ by zero, let $\tilde\rho_j \doteq\rho * \phi_{1/j}$ be a standard mollification and set
\begin{align*}
    \rho_j \doteq \tilde\rho_j - \fint_{\Omega_j}\tilde\rho_j \, dx
    \qquad \text{on $\Omega_j$},
\end{align*}
so that $\int_{\Omega_j}\rho_j = 0$, $\rho_j \in C^\infty(\overline\Omega_j)$ and $\norm{\rho_j}_{L^\infty(\Omega_j)} \leq 2\norm{\rho}_{L^\infty(\Omega)} \doteq 2\Lambda$. Moreover, since $\tilde\rho_j \to \rho$ in $L^2(\R^m)$ and $\fint_{\Omega_j}\tilde\rho_j \to \tfrac{1}{|\Omega|}\int_\Omega \rho = 0$ by the zero-mean assumption on $\rho$, we have
\begin{align}\label{eq:rhoj_L2}
    \1_{\Omega_j}\rho_j \longrightarrow \rho \qquad \text{in $L^2(\Omega)$.}
\end{align}
By \cref{thm:Rm_capillary_existence} there exists, for each $j$, a strictly space-like $u_j \in C^{2,\lambda}(\overline\Omega_j)\cap C^\infty(\Omega_j)$ with $\int_{\Omega_j} u_j = 0$ solving
\begin{align}\label{eq:approx_pb_proof}
    - \div\Bigg(\frac{Du_j}{\sqrt{1 - |Du_j|^2}} \Bigg) = \rho_j
    \quad \text{in $\Omega_j$,}
    \qquad Du_j \cdot n_j = 0 \quad \text{on $\del\Omega_j$,}
\end{align}
where $n_j$ is the exterior normal of $\del\Omega_j$; by \cref{rmk:solutions_are_max}, $u_j$ is the maximizer of $I_{\rho_j,0}$ in $\mathring A(\Omega_j)$.

\smallskip\noindent
\emph{Step 1: the limit function.} Each $u_j$ is $1$-Lipschitz on the convex set $\overline\Omega_j$ and has zero mean there, hence $\norm{u_j}_{L^\infty(\Omega_j)} \leq \diam\Omega$. Extending each $u_j$ to $\overline\Omega$ as a $1$-Lipschitz function (\cite[Theorem 3.1]{EvansGariepy2015}), Ascoli--Arzelà yields, up to a subsequence, $u_j \to u$ uniformly on $\overline\Omega$ for some $u$. On every compact subset of $\Omega$, $u$ is a uniform limit of $1$-Lipschitz functions, and $\int_\Omega u = \lim_j \int_{\Omega_j} u_j = 0$ by dominated convergence; hence $u \in \mathring A(\Omega)$. By \cref{lem:converging_of_maximizers}, $u$ is the maximizer of $I_{\rho,0}$ in $\mathring A(\Omega)$.

\smallskip\noindent
\emph{Step 2: consequences of the monotonicity formula.} Set
$v_j \doteq ({1 - |Du_j|^2})^{1/2}$ and $R > \diam\Omega$. For every $o \in \overline\Omega_j$ we have $\Omega_j \subseteq B_R(o) \subseteq L^j_R(o)$, where $L^j_R(o)$ is the Lorentzian ball \eqref{eq:BS_ell} relative to $u_j$, because the Lorentzian distance is dominated by the Euclidean one. Since $u_j$ is a free boundary solution on the convex domain $\Omega_j$ with $\norm{H_{u_j}}_{L^\infty(\Omega_j)} \leq 2\Lambda$, \cref{lem:BS_monotonicity} applies at every $o \in \overline\Omega_j$ and yields a constant $C = C(m,\Lambda,\diam\Omega)$ such that, for every $j$ and every $o \in \overline\Omega_j$,
\begin{align}
    \int_{\Omega_j} |D^2u_j|^2 \, dx \ &\leq\ C\,v_j(o)^\alpha \ \leq\ C,
    \label{eq:mono_D2}\\
    \int_{\Omega_j} v_j^{\alpha+1} \, dx \ &\leq\ C\,v_j(o)^\alpha.
    \label{eq:mono_v}
\end{align}

\smallskip\noindent
\emph{Step 3: local $W^{2,2}$ convergence.} Fix $\Omega' \Subset \Omega$ and let $j$ be so large that $\Omega' \subseteq \Omega_j$. By \eqref{eq:mono_D2} and the uniform $L^\infty$ and Lipschitz bounds, $\set{u_j}$ is bounded in $W^{2,2}(\Omega')$. By a diagonal argument over an exhaustion of $\Omega$, up to a further subsequence
\begin{align*}
    u_j \rightharpoonup u \quad \text{in $W^{2,2}_\loc(\Omega)$,} \qquad
    u_j \to u \quad \text{in $W^{1,2}_\loc(\Omega)$,} \qquad
    Du_j \to Du \quad \text{a.e.\ in $\Omega$,}
\end{align*}
the identification of the limit with $u$ being granted by the uniform convergence of Step 1. In particular $u \in W^{2,2}_\loc(\Omega)$ and, setting $v \doteq \sqrt{1 - |Du|^2}$,
\begin{align*}
    v_j \to v \quad \text{a.e.\ in $\Omega$}, \qquad\text{hence}\qquad
    \int_{\Omega_j} v_j^{\alpha+1}\,dx
    = \int_\Omega \1_{\Omega_j} v_j^{\alpha+1}\,dx
    \ \longrightarrow\ \int_\Omega v^{\alpha+1}\,dx
\end{align*}
by dominated convergence, since $0 \leq v_j \leq 1$ and $\Omega_j \nearrow \Omega$.

\smallskip\noindent
\emph{Step 4: the limit is uniformly space-like.} Suppose first, by contradiction, that $v = 0$ a.e.\ in $\Omega$. Pick a point $o \in \Omega$ with $v_j(o) \to 0$: then, by \eqref{eq:mono_D2} applied at this $o$ and by weak lower semicontinuity of the $W^{2,2}(\Omega')$ seminorm,
\begin{align*}
    \int_{\Omega'} |D^2u|^2\,dx
    \ \leq\ \liminf_{j\to\infty} \int_{\Omega_j} |D^2u_j|^2\,dx
    \ \leq\ C \lim_{j\to\infty} v_j(o)^\alpha = 0
\end{align*}
for every $\Omega' \Subset \Omega$. Hence $Du$ is constant on the connected set $\Omega$ with $|Du| = 1$, that is, the graph of $u$ lies in a lightlike hyperplane; in particular $u$ possesses light segments contained in $\Omega$, contradicting \cref{thm:capillary_no_light}. Therefore
\begin{align*}
    2\delta \doteq \int_\Omega v^{\alpha+1}\,dx > 0,
\end{align*}
and by Step 3 we have $\int_{\Omega_j} v_j^{\alpha+1} \geq \delta$ for all $j \geq j_0$. Feeding this into \eqref{eq:mono_v} gives, for every $j \geq j_0$ and {every} $o \in \overline\Omega_j$,
\begin{align*}
    v_j(o)^\alpha \ \geq\ \frac{\delta}{C},
    \qquad\text{that is}\qquad
    v_j \geq \vartheta \doteq \Big(\frac{\delta}{C}\Big)^{1/\alpha}
    \quad \text{on $\overline\Omega_j$.}
\end{align*}
Equivalently, $w_j \doteq v_j^{-1} \leq \vartheta^{-1}$ and $|Du_j| \leq \sqrt{1 - \vartheta^2} \doteq 1 - \theta$ on $\overline\Omega_j$, uniformly in $j \geq j_0$. Passing to the a.e.\ limit, $|Du| \leq 1 - \theta$ a.e.\ in $\Omega$ and $w \doteq v^{-1} \in L^\infty(\Omega)$.

\smallskip\noindent
\emph{Step 5: passage to the limit in the weak formulation.} Fix $\eta \in
C^1(\overline\Omega)$. Each $u_j$ satisfies \eqref{eq:Rm_weak_sol} on
$\Omega_j$ with $\psi = 0$:
\begin{align*}
    \int_{\Omega} \1_{\Omega_j}\, w_j Du_j \cdot D\eta \, dx
    = \int_{\Omega} \1_{\Omega_j}\, \eta\rho_j \, dx.
\end{align*}
By Step 3 and Step 4, $\1_{\Omega_j} w_j Du_j \to wDu$ a.e.\ in $\Omega$, with $|\1_{\Omega_j} w_j Du_j \cdot D\eta| \leq \vartheta^{-1} \norm{D\eta}_{L^\infty(\Omega)} \in L^1(\Omega)$: dominated convergence gives the convergence of the left-hand side to $\int_\Omega wDu\cdot D\eta$. The right-hand side converges to $\int_\Omega \eta\rho$ by \eqref{eq:rhoj_L2}. Hence $u$ satisfies \eqref{eq:Rm_weak_sol} with $\psi = 0$, and since $w \in L^\infty(\Omega) \subseteq L^1(\Omega)$, $u$ is a weak solution to \eqref{eq:problem_Rm}.

Finally, $u \in \mathring A(\Omega)$ coincides with the maximizer $u_{\rho,0}$ by Step 1, and any weak solution is a maximizer of $I_{\rho,0}$ by \cref{rmk:solutions_are_max}, hence coincides with $u_{\rho,0}$ up to an additive constant by \cref{lem:existence_maximizer}. This proves uniqueness and concludes the proof of \cref{thm:Rm_main}. \qed

\begin{rmk}
    \cref{lem:BS_monotonicity} is used to prove the implication
    \begin{align}\label{eq:giovanna}
        v = 0 \quad \text{a.e.} \quad \Longrightarrow \quad D^2 u = 0 \quad \text{a.e.}
    \end{align}
    which in general is not guaranteed (consider for example a sequence $u_j$ of strongly oscillating, nearly light-like, functions with $|Du| = 1$ a.e. and converging uniformly to $0$), and then again to show that if $\esssup v > 0$ somewhere, then $\essinf v > 0$. \cref{thm:capillary_no_light} intervenes to exclude the conclusion in \eqref{eq:giovanna}, which nonetheless could have been excluded also by other means. For example, arguing as in \cite[Proposition 3.13]{BIMM}, it is not difficult to show that the tilt function of the maximizer satisfies
    \begin{align*}
        \frac{1}{\sqrt{1 - |Du|^2}} \in L^1(\Omega)
    \end{align*}
    at least in case $\Gamma = \varnothing$. The absence of light segments is on the contrary fundamental in \cite{BartnikSimon1982} to ensure the applicability of \cref{thm:BS_monotonicity} and run the argument, for it prevents the Lorentzian balls from touching the boundary. In our case, the free boundary condition is strong enough to ensure the validity of the monotonicity formula regardless of the presence of light segments, the absence of which is proved \emph{a posteriori} by the proof of \cref{thm:Rm_main}.
\end{rmk}

\appendix

\section{Appendix}

\subsection{CMC boundaries are weak solutions} In \cref{item:vCMC} of \cref{rmk:prop_v} it is observed that the mean curvature of a CMC function \eqref{eq:barriers} with vertex at $o$ is of the form $H_v = - \Lambda + \omega_{m-1} K \delta_o$. It is clear that on a domain that avoids $o$, $v$ is a classical solution of a problem \eqref{eq:general_problem} with bounded constant mean curvature. Here we show that if $o$ lies in the Dirichlet part of the boundary of the domain, $v$ has still bounded constant mean curvature in a weak sense. 

We verify that the CMC functions \eqref{eq:barriers} are weak solutions of a problem \eqref{eq:general_problem} with bounded constant mean curvature and capillary derivative, even if their vertex $o$ lies on the Dirichlet component of the boundary. This is not true in the case $o$ is an internal point, for the mean curvature at $o$ gives a Dirac delta contribution, as stated in \cref{item:vCMC} of \cref{rmk:prop_v}.

\begin{lem}\label{lem:barrier_is_maximizer}
    Let $\Omega' \subseteq \R^m$ be a bounded, convex domain, let $\Gamma'\subseteq \del\Omega'$ be a compact set, $\Nu' \doteq \del\Omega'\setminus\Gamma'$, and let $o \in \Gamma'$. Let $\Lambda \leq 0$, $K > 0$ and let $v \doteq v^-_{o,\Lambda,K}$ be the CMC function \eqref{eq:barriers}. Set $\psi_v \doteq w_v Dv \cdot n$ on $\Nu'$, where $n$ is the outward normal of $\del\Omega'$. Then $v$ is the maximizer of $I_{\Lambda,\psi_v}$ in $A_v(\Omega';\Gamma')$.
\end{lem}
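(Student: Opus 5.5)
The plan is to exploit concavity of $I_{\Lambda,\psi_v}$. For every competitor $\tilde v \in A_v(\Omega';\Gamma')$, set $\eta \doteq \tilde v - v$, which is Lipschitz and vanishes on $\Gamma'$. Since $p\mapsto\sqrt{1-|p|^2}$ is concave with gradient $-w_pp$ at $|p|<1$, we have pointwise a.e.\ in $\Omega'$ (recall $v$ is strictly space-like off $o$ by \cref{rmk:prop_v})
\begin{align*}
    \sqrt{1-|D\tilde v|^2} \leq \sqrt{1-|Dv|^2} - w_v Dv\cdot D\eta .
\end{align*}
Integrating, it suffices to prove the weak Euler--Lagrange inequality (in fact equality)
\begin{align*}
    \int_{\Omega'} w_v Dv\cdot D\eta\,dx = \int_{\Omega'}\Lambda\eta\,dx + \int_{\Nu'}\psi_v\eta\,d\Haus^{m-1}
\end{align*}
for all Lipschitz $\eta$ vanishing on $\Gamma'$. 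Then $I_{\Lambda,\psi_v}(\tilde v)\leq I_{\Lambda,\psi_v}(v)$ follows directly. (The sign convention must match $H_v=-\Lambda$ for $v^-$, i.e.\ $-\div(w_vDv)=\Lambda$; I will check this against \eqref{eq:barriers}.) Note that $w_vDv = -\big(\tfrac{K}{s^{m-1}}+\tfrac{\Lambda s}{m}\big)\tfrac{z-o}{s}$ with $s=|z-o|$, so $|w_vDv|\lesssim s^{1-m}$, which is in $L^1_{\loc}$. Hence the left-hand side is finite, and so is $\psi_v$ away from $o$.

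To prove the identity, I would excise a small ball: let $\Omega'_\eps\doteq\Omega'\setminus \overline{B_\eps(o)}$. On $\Omega'_\eps$, $v$ is smooth and solves the CMC equation classically, so by the divergence theorem (valid on the Lipschitz domain $\Omega'_\eps$, for a.e.\ $\eps$)
\begin{align*}
    \int_{\Omega'_\eps} w_vDv\cdot D\eta = \int_{\Omega'_\eps}\Lambda\eta + \int_{\del\Omega'\setminus B_\eps(o)} \eta\, w_vDv\cdot n - \int_{\Omega'\cap\del B_\eps(o)}\eta\, w_vDv\cdot\tfrac{z-o}{\eps}.
\end{align*}
On $\del\Omega'\setminus B_\eps(o)$ the contributions from $\Gamma'$ vanish because $\eta=0$ there, leaving the $\Nu'$ term with $\psi_v$. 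Letting $\eps\to0$, the volume integrals converge by integrability of $s^{1-m}$. The $\Nu'$ integral also converges by dominated convergence, provided $\psi_v\in L^1(\Nu')$ near $o$. I expect this to hold since $|\psi_v|\lesssim s^{1-m}$ on an $(m-1)$-dimensional Lipschitz boundary, but the singularity is borderline. In the situation of \cref{thm:stronger_no_light}, $o\in\Gamma'$ lies at positive distance from $\Nu'$ in the relevant sense, so I would either assume this or use $\eta=0$ on $\Gamma'$ together with the Lipschitz bound $|\eta(z)|\leq C\,\dist(z,\Gamma')\le C|z-o|$, which gives an integrand $\lesssim s^{2-m}$, integrable on $\Nu'$.

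The main obstacle is the spherical term around the vertex $o$. There $|w_vDv|\approx K\eps^{1-m}$ and the area of $\Omega'\cap\del B_\eps(o)$ is $\lesssim\eps^{m-1}$, so the term is $O(K)\cdot\sup_{\del B_\eps(o)\cap\Omega'}|\eta|$. This is exactly where the Dirac mass $\omega_{m-1}K\delta_o$ from \cref{rmk:prop_v} would appear for interior $o$. Here, however, $o\in\Gamma'$ and $\eta(o)=0$ with $\eta$ Lipschitz, so $\sup_{\del B_\eps(o)}|\eta|\leq \mathrm{Lip}(\eta)\,\eps\to0$, and the term disappears. The point requiring care is that $\eta$ is Lipschitz up to $o$. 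This holds because both $\tilde v$ and $v$ are $1$-Lipschitz on the convex set $\overline{\Omega'}$ and agree at $o$, so $|\eta(z)|\leq 2|z-o|$. With all boundary terms controlled, the identity holds, and concavity gives the maximality of $v$.
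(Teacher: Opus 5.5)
Your proposal is correct and follows essentially the same route as the paper. Concavity of $p\mapsto\sqrt{1-|p|^2}$ reduces the claim to the weak identity for $\eta=\zeta-v$, which you prove by excising $B_\delta(o)$ and killing the spherical boundary term with $|\eta|\le 2|z-o|$ against $w_v\le c_K|z-o|^{1-m}$. You are slightly more careful than the paper in passing to the limit in the $\Nu'$ boundary integral, by dominating $|\eta\,\psi_v|$ with a multiple of $|z-o|^{2-m}$. The paper ends with a one-line uniqueness remark via strict concavity and \cref{lem:existence_maximizer}, which you should add to justify speaking of \emph{the} maximizer.
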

\begin{proof}
    Let $\zeta \in A_v(\Omega';\Gamma')$ be any competitor. By the concavity of $f(p) = \sqrt{1-|p|^2}$ on $\overline{B_1}$ the inequality
    \begin{align}\label{eq:barrier_concavity}
        f(D\zeta) \ \leq\ f(Dv) - w_v\, Dv \cdot (D\zeta - Dv)
    \end{align}
    holds a.e.\ in $\Omega'$ (also where $|D\zeta| = 1$, by continuity of $f$). Note that $\eta \doteq \zeta - v \in W^{1,\infty}(\overline{\Omega'})$
    satisfies $\eta = 0$ on $\Gamma'$; in particular, since $v$ is
    $1$-Lipschitz and $\eta(o) = 0$,
    \begin{align}\label{eq:barrier_eta_small}
        |\eta| \leq 2|z - o| \qquad \text{on $\overline{\Omega'}$.}
    \end{align}
    Moreover, by \eqref{eq:barriers} the radial profile of $v$ gives, for
    $s = |z - o|$ small,
    \begin{align}\label{eq:barrier_wv}
        w_v(z) = \frac{\sqrt{s^{2(m-1)} + (K + \Lambda s^m/m)^2}}{s^{m-1}}
        \ \leq\ \frac{c_K}{s^{m-1}},
    \end{align}
    so that $w_v \in L^1(\Omega')$ and $w_v \mathscr H^{m-1}(\del B_\delta(o)) \leq c_K' $ uniformly in $\delta$ small.

    Fix $\delta > 0$ small and set $\Omega'_\delta \doteq \Omega' \setminus \overline{B_\delta(o)}$. Since $v$ is a classical solution on $\overline{\Omega'_\delta}$, integrating by parts,
    \begin{align*}
        -\int_{\Omega'_\delta} w_v\, Dv\cdot D\eta \, dx
        = \int_{\Omega'_\delta} \eta \,\div(w_vDv) \, dx
        - \int_{\del\Omega'_\delta} \eta\, w_v\, Dv\cdot \sigma \,
            d\mathscr H^{m-1},
    \end{align*}
    where $\sigma$ is the outward normal of $\Omega'_\delta$. On $\del\Omega'_\delta$ we distinguish: on $\Gamma' \setminus B_\delta(o)$ the integrand vanishes since $\eta = 0$ there; on $\Nu' \setminus B_\delta(o)$ we have $w_vDv\cdot\sigma = \psi_v$; on $\Omega' \cap \del B_\delta(o)$, by $|Dv \cdot \sigma| \leq 1$, \eqref{eq:barrier_eta_small} and \eqref{eq:barrier_wv},
    \begin{align*}
        \left| \int_{\Omega'\cap\del B_\delta(o)} \eta\, w_v \,Dv\cdot\sigma \,
        d\mathscr H^{m-1} \right|
        \ \leq\ 2\delta \cdot \frac{c_K}{\delta^{m-1}} \cdot
        \omega_{m-1}\delta^{m-1}
        \ =\ c\,\delta \ \longrightarrow\ 0.
    \end{align*}
    Using $\div(w_vDv) = H_v = - \Lambda$ on $\Omega'_\delta$ and letting $\delta \to 0$ (the volume integrals converge by dominated convergence), we obtain
    \begin{align}\label{eq:barrier_ibp}
        -\int_{\Omega'} w_v\, Dv\cdot D\eta \, dx
        = - \int_{\Omega'} \eta\, \Lambda \, dx
        - \int_{\Nu'} \eta\, \psi_v \, d\mathscr H^{m-1}.
    \end{align}
    Integrating \eqref{eq:barrier_concavity} over $\Omega'$ and inserting \eqref{eq:barrier_ibp},
    \begin{align*}
        I_{\Lambda,\psi_v}(\zeta) - I_{\Lambda,\psi_v}(v)
        &= \int_{\Omega'} \big(f(D\zeta) - f(Dv)\big) \, dx
        + \int_{\Omega'} \eta\,\Lambda \, dx
        + \int_{\Nu'} \eta\,\psi_v \, d\mathscr H^{m-1} \\
        &\leq -\int_{\Omega'} w_v\, Dv\cdot D\eta \, dx
        + \int_{\Omega'} \eta\,\Lambda \, dx
        + \int_{\Nu'} \eta\,\psi_v \, d\mathscr H^{m-1}
        \ =\ 0.
    \end{align*}
    Hence $v$ is a maximizer. It is the unique one by \cref{lem:existence_maximizer}.
\end{proof}

\subsection{Oblique derivative boundary conditions}\label{sub:oblique}

Here we include for completeness some well known facts about oblique derivative problems.

\begin{lem}\label{lem:Rm_linear_wellposed}
    Let $\Omega \subseteq \R^m$ a $C^{2,\lambda}$-regular domain, $\lambda \in (0,1)$. Let $a = (a^{ij}) \in C^{0,\lambda}(\overline\Omega; \mathrm{Sym}(m))$ with
    \begin{align*}
        \mu\,|\xi|^2 \leq a^{ij}(x)\,\xi_i\xi_j \leq \mathrm M\,|\xi|^2, \qquad x \in \overline\Omega, \ \xi \in \R^m,
    \end{align*}
    for constants $0 < \mu \leq \mathrm M$, and let $\eps \in (0,1]$, $f \in C^{0,\lambda}(\overline\Omega)$. Then the problem
    \begin{align}\label{eq:Rm_linear_problem}
        \begin{cases}
            a^{ij}\del_{ij}^2u - \eps u = f &\qquad \text{in $\Omega$} \\[4pt]
            Du \cdot n = 0 &\qquad \text{on $\del\Omega$}
        \end{cases}
    \end{align}
    has a unique solution $u \in C^{2,\lambda}(\overline\Omega)$, and the estimates
    \begin{align}\label{eq:Rm_linear_estimates}
        \norm{u}_{L^\infty(\Omega)} \leq \frac{1}{\eps}\,\norm{f}_{L^\infty(\Omega)},
        \qquad
        \norm{u}_{C^{2,\lambda}(\overline\Omega)} \leq C\left(1 + \frac1\eps\right)\norm{f}_{C^{0,\lambda}(\overline\Omega)},
    \end{align}
    hold with $C = C(m, \lambda, \mu, \mathrm M, \norm{a}_{C^{0,\lambda}(\overline\Omega)}, \Omega)$.
\end{lem}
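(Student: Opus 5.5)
The plan is the standard Fredholm/continuity-method route for the linear oblique problem. The key ingredients are a maximum principle with the Hopf lemma, Schauder estimates for the oblique derivative problem, and the method of continuity from the Laplacian-type operator $\Delta - \eps$.

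\emph{Step 1: the $L^\infty$ bound and uniqueness.} Let $u\in C^{2,\lambda}(\overline\Omega)$ solve \eqref{eq:Rm_linear_problem} and let $x_0$ be a maximum point of $u$. If $x_0\in\Omega$, then $a^{ij}\del_{ij}u(x_0)\le 0$, so $-\eps u(x_0)\ge f(x_0)$, i.e. $u(x_0)\le \eps^{-1}\norm{f}_{L^\infty}$. If $x_0\in\del\Omega$ and $u$ is not constant, then the Hopf lemma (valid since $\del\Omega$ is $C^{2}$, hence satisfies an interior sphere condition) applies to $L=a^{ij}\del_{ij}$. We may apply it on the set where $u>0$: there $Lu=\eps u+f$, and if $u(x_0)>\eps^{-1}\norm f_\infty$ then $Lu>0$ near $x_0$. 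Hopf then gives $Du\cdot n(x_0)>0$, contradicting the boundary condition. The case of a constant $u$ is trivial. Arguing symmetrically at the minimum gives the first estimate in \eqref{eq:Rm_linear_estimates}. Applying it with $f=0$ gives uniqueness.

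\emph{Step 2: a priori Schauder estimate.} Invoke the global Schauder estimate for oblique derivative problems, \cite[Theorem 6.30]{GilbargTrudinger2001} or the corresponding statement in \cite{Lieberman2013}:
\begin{align*}
\norm{u}_{C^{2,\lambda}(\overline\Omega)}\le C\big(\norm{u}_{L^\infty}+\norm{f}_{C^{0,\lambda}}\big),
\end{align*}
with $C=C(m,\lambda,\mu,\mathrm M,\norm{a}_{C^{0,\lambda}},\Omega)$. The boundary operator is $Du\cdot n$; it is uniformly oblique (indeed normal), and since $\del\Omega\in C^{2,\lambda}$ we have $n\in C^{1,\lambda}$, so the hypotheses hold. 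The zeroth-order coefficient $-\eps$ has $C^{0,\lambda}$ norm at most $1$, so $C$ does not depend on $\eps$. Inserting Step 1 gives $C(1+\eps^{-1})\norm f_{C^{0,\lambda}}$.

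\emph{Step 3: existence by the method of continuity.} Set $L_t=(1-t)(\Delta-\eps)+t(a^{ij}\del_{ij}-\eps)$, with the boundary operator fixed as the normal derivative. The coefficients of $L_t$ have ellipticity constants $\min(1,\mu)$ and $\max(1,\mathrm M)$ and uniformly bounded Hölder norms. Steps 1 and 2 therefore give a bound $\norm u_{C^{2,\lambda}}\le C\norm{L_tu}_{C^{0,\lambda}}$ for all $u$ with $Du\cdot n=0$, uniformly in $t$. By \cite[Theorem 5.2]{GilbargTrudinger2001}, solvability at $t=1$ reduces to solvability at $t=0$, that is, of $\Delta u-\eps u=f$ with $\del_n u=0$ in $C^{2,\lambda}$. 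For this, take the weak solution in $H^1$ given by Lax--Milgram: the form $\int Du\cdot D\varphi+\eps u\varphi$ is coercive thanks to $\eps>0$. Its $C^{2,\lambda}$ regularity follows from the standard Neumann regularity theory, as in \cite[Theorem 6.31]{GilbargTrudinger2001}. Alternatively, cite directly \cite[Theorem 6.31]{GilbargTrudinger2001} or Lieberman's existence theorem for oblique problems with $c=-\eps<0$.

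The main obstacle I expect is purely bookkeeping: checking that the cited Schauder constant is genuinely independent of $\eps\in(0,1]$, and that the Hopf argument in Step 1 works for merely Hölder, non-divergence coefficients. The latter is fine because Hopf's lemma only needs bounded coefficients and uniform ellipticity.
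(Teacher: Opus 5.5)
Your proposal is correct and follows essentially the same route as the paper: a maximum principle with Hopf's lemma for the $L^\infty$ bound and uniqueness, the oblique Schauder estimate of Gilbarg--Trudinger (Theorem 6.30), and the method of continuity from $\Delta-\eps$ with Neumann data, solved via Lax--Milgram plus regularity. The differences are cosmetic. You treat an interior maximum with the pointwise inequality $a^{ij}\partial_{ij}u(x_0)\le 0$ instead of the strong maximum principle; note that it is this interior case, not the non-constancy of $u$, that gives the strict inequality Hopf needs. You also cite Neumann regularity or Lieberman's existence theory directly, where the paper goes through Lieberman's $C^{1,\alpha}$ result and Nardi's Schauder theory.
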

\begin{proof}
     Let $L \doteq a^{ij}\del_{ij}^2 - \eps$. The global Schauder estimates for oblique problems \cite[Theorem 6.30]{GilbargTrudinger2001} give, for every $u \in C^{2,\lambda}(\overline\Omega)$ solution to \eqref{eq:Rm_linear_problem},
    \begin{align}\label{eq:Rm_schauder_oblique}
        \norm{u}_{C^{2,\lambda}(\overline\Omega)}
        \ \leq\ C\Big( \norm{u}_{L^\infty(\Omega)} + \norm{a^{ij}\del_{ij}^2u - \eps u}_{C^{0,\lambda}(\overline\Omega)} \Big),
    \end{align}
    where $C = C(m,\lambda,\mu,\mathrm M,\norm{a}_{C^{0,\lambda}},\Omega)$: the boundary operator $n$ is uniformly oblique and $\norm{n}_{C^{1,\lambda}(\del\Omega)}$ is controlled by the $C^{2,\lambda}$ regularity of $\del\Omega$, and the zero order coefficient satisfies $|-\eps| \leq 1$. 

    To deduce the second in \eqref{eq:Rm_linear_estimates} we apply the maximum principle. Let $u \in C^2(\Omega)\cap C^1(\overline{\Omega})$ be any solution to \eqref{eq:Rm_linear_problem} and let $h \doteq u -  \eps^{-1}\norm{f}_{L^\infty(\Omega)}$, so that $h$ solves
    \begin{align*}
        \begin{cases}
            Lh \geq 0 &\qquad \text{in $\Omega$}, \\ 
            Dh \cdot n = 0 &\qquad \text{on $\del\Omega$.}
        \end{cases}
    \end{align*}
    We want to show that $h \leq 0$. Assume by contradiction that $s\doteq \sup_{\overline\Omega} h > 0$. If the maximum is attained in $\Omega$, then the strong maximum principle \cite[Theorem 3.5]{GilbargTrudinger2001} applies and $h$ must be constant, hence $Lh = -\eps s < 0$ at the maximum point, a contradiction. If the maximum is attained at some point $x\in\del\Omega$ and $h < s$ on the interior, then the Hopf Lemma \cite[Lemma 3.4]{GilbargTrudinger2001} gives $Du \cdot n > 0$ at $x$, a contradiction with the boundary condition. We conclude that $h \leq 0$, that is $\norm{u}_{L^\infty(\Omega)} \leq {\eps}^{-1}{\norm{f}_{L^\infty(\Omega)}}.$ Combining this with \eqref{eq:Rm_schauder_oblique} with $f = a^{ij}\del_{ij}^2u - \eps u$ gives the second in \eqref{eq:Rm_linear_estimates}. Applying the maximum principle to the difference of two solutions also gives uniqueness.

    To show existence, we apply the continuity method. Let
    \begin{align*}
        \mathfrak A \doteq \set{u \in C^{2,\lambda}(\overline\Omega) \ | \ Du \cdot n = 0 \text{ on $\del\Omega$} }, \qquad \mathfrak B \doteq C^{0,\lambda}(\overline\Omega),
    \end{align*}
    and for every $t\in[0,1]$ define
    \begin{align*}
        \mathfrak L_t: \mathfrak A \longrightarrow \mathfrak B, \qquad \mathfrak L_t u \doteq a_t^{ij}\del_{ij}^2u - \eps u, \qquad a_t^{ij} = (1 - t)\delta^{ij} + ta^{ij}.
    \end{align*}
    These are bounded linear operators, moreover they are uniformly elliptic with constants $\min\set{1,\mu}\leq \max\set{1,\Mu}$ and the $C^{0,\lambda}$-norms are uniformly bounded in $t$. By the above discussion the inequality
    \begin{align*}
        \norm{u}_{\mathfrak A} \leq C \left( 1 + \tfrac1\eps \right) \norm{\mathfrak L_tu}_{\mathfrak B}
    \end{align*}
    holds for every $t \in [0,1]$ with the same constant $C$ established above. By \cite[Theorem 5.2]{GilbargTrudinger2001}, $\mathfrak L_1$ is surjective if and only if $\mathfrak L_0 = \Delta - \eps$ is, so we are left to show that the problem
    \begin{align}\label{eq:hoppipolla}
        \Delta u - \eps u = f \quad \text{in $\Omega$} \qquad Du \cdot n = 0 \quad \text{on $\del\Omega$}
    \end{align}
    has a solution in $C^{2,\lambda}(\overline\Omega)$ for every $f \in C^{0,\lambda}(\Omega)$.

    We do so first by applying Lax-Milgram to the bilinear form
    \begin{align*}
        B(u,v) \doteq \int_\Omega Du \cdot Dv \, dx + \eps \int_\Omega uv, \qquad u,v \in W^{1,2}(\Omega)
    \end{align*}
    which is continuous and coercive, hence, there exists a unique weak solution $u \in W^{1,2}(\Omega)$ to \eqref{eq:hoppipolla}. Testing the weak equation with the test $(u - \eps^{-1}\norm{f}_{L^\infty(\Omega)})_+ \in W^{1,2}(\Omega)$ yields $u \leq \eps^{-1}\norm{f}_{L^\infty(\Omega)}$ a.e. on $\Omega$. Then \cite[Theorem 2]{Lieberman88} gives $u \in C^{1,\alpha}$ for some $\alpha\in(0,1)$, in particular it is Lipschitz. Since by \cite{Nardi2014} the problem $\Delta u =  f + \eps u \in C^{0,\lambda}$ with homogeneous Neumann boundary conditions has a unique solution in $C^{2,\lambda}(\overline\Omega)$, $u$ must itself be $C^{2,\lambda}(\overline{\Omega})$ and satisfy $Du \cdot n = 0$ pointwise on $\del\Omega$. Hence $\mathfrak L_0:\mathfrak A \to \mathfrak B$ is surjective, and we are done.
\end{proof}

\begin{lem}\label{lem:Rm_truncated_flux}
    Let $\theta \in (0,1)$ and let $h(s) \doteq s(1-s^2)^{-1/2}$, so that $A(p) = w_p\,p = h(|p|)\tfrac{p}{|p|}$. There exists $h_\theta \in C^\infty([0,\infty))$ with
    \begin{align*}
        h_\theta = h \ \text{ on } [0, 1-\theta], \qquad
        1 \leq h_\theta'(s) \leq \Mu_\theta
        \doteq \big(\theta(1-\tfrac\theta4)\big)^{-3/2}, \qquad
        \tfrac{h_\theta(s)}{s} \in [1,\Mu_\theta] \quad \forall s>0,
    \end{align*}
    such that the vector field $A_\theta(p) \doteq h_\theta(|p|)\tfrac{p}{|p|}$, $A_\theta(0) = 0$, belongs to $C^\infty(\R^m;\R^m)$, coincides with $A$ on $|p| \leq 1 - \theta$ and satisfies
    \begin{align}\label{eq:Rm_truncated_ellipticity}
        |\xi|^2 \ \leq\ \frac{\del A_\theta^i}{\del p_j}(p)\,\xi_i\xi_j
        \ \leq\ \Mu_\theta\,|\xi|^2,
        \qquad \forall\, p,\xi \in \R^m.
    \end{align}
\end{lem}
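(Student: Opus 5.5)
I will construct $h_\theta$ explicitly by freezing the slope of $h$ at the truncation point. Let $s_0 \doteq 1-\theta$. On $[0,1)$ we have
\begin{align*}
    h'(s)=(1-s^2)^{-3/2}\ \geq 1 \quad\text{(increasing)}, \qquad \frac{h(s)}{s}=(1-s^2)^{-1/2}\ \geq 1 .
\end{align*}
Also $h'(s_0)=(1-(1-\theta)^2)^{-3/2}=(\theta(2-\theta))^{-3/2}$. Since $2-\theta\geq 1-\tfrac\theta4$, this gives $h'(s_0)\leq\Mu_\theta$, and the slack is strict.

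First I fix a smooth nondecreasing $k:[0,\infty)\to[1,\Mu_\theta]$ with $k=h'$ on $[0,s_0]$ and $k$ constant for $s\geq s_0+\delta$. For instance, take $k=h'$ up to $s_0+\delta$, where $\delta$ is chosen so that $h'(s_0+\delta)\leq \Mu_\theta$ and $s_0+\delta<1$. Beyond that point, glue smoothly to the constant $h'(s_0+\delta)$ through a monotone smooth cutoff, which keeps values between $1$ and $\Mu_\theta$. Then I set $h_\theta(s)\doteq\int_0^s k$.

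With this definition $h_\theta=h$ on $[0,1-\theta]$, $h_\theta$ is $C^\infty$, and $1\leq h_\theta'\leq\Mu_\theta$. The ratio $h_\theta(s)/s$ is the average of $k$ over $[0,s]$, so it also lies in $[1,\Mu_\theta]$.

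Next I check smoothness of $A_\theta$ at $p=0$. On the ball $|p|<1-\theta$, $A_\theta$ coincides with $A(p)=(1-|p|^2)^{-1/2}p$, which is smooth. Away from the origin, $A_\theta$ is a composition of smooth maps.

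For the ellipticity bound \eqref{eq:Rm_truncated_ellipticity}, I compute at $p\neq0$ with $\hat p=p/|p|$:
\begin{align*}
    \frac{\del A^i_\theta}{\del p_j}=h_\theta'(|p|)\,\hat p^i\hat p^j+\frac{h_\theta(|p|)}{|p|}\big(\delta^{ij}-\hat p^i\hat p^j\big).
\end{align*}
This is symmetric, with eigenvalue $h_\theta'$ in the radial direction and $h_\theta/|p|$ on the orthogonal complement. Both eigenvalues lie in $[1,\Mu_\theta]$. At $p=0$ the Jacobian is the identity. Hence \eqref{eq:Rm_truncated_ellipticity} holds for all $p,\xi\in\R^m$.

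The only delicate point is the gluing that keeps $k$ smooth, monotone and at most $\Mu_\theta$. This is routine given the strict inequality $h'(s_0)<\Mu_\theta$ established above.
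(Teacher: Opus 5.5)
Your proof is correct and is essentially the paper's argument. Both extend $h'$ past $1-\theta$ by a smooth function with values in $[1,\Mu_\theta]$ that is eventually constant, and both read off \eqref{eq:Rm_truncated_ellipticity} from the radial and tangential eigenvalues $h_\theta'$ and $h_\theta(s)/s$; you make explicit, where the paper does not, that the latter is the average of $h_\theta'$ on $[0,s]$.

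The only flaw is in your ``for instance''. A smooth nondecreasing $k$ that equals $h'$ up to $s_0+\delta$ cannot then settle at the constant $h'(s_0+\delta)$, because $h''>0$. Glue instead on $[s_0,s_0+\delta]$ with $k=(1-\chi)h'+\chi\,h'(s_0+\delta)$, where $\chi$ is a smooth monotone cutoff. This $k$ is nondecreasing because $h'\leq h'(s_0+\delta)$ on that interval, and it takes values in $[1,\Mu_\theta]$; in any case monotonicity of $k$ is never actually used.
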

\begin{proof}
    The eigenvalues of $DA_\theta(p)$ are $h_\theta'(|p|)$, in the radial direction, and $h_\theta(|p|)/|p|$ with multiplicity $m-1$; for the untruncated field one has $h'(s) = w_s^3$ and $h(s)/s = w_s$, both lying in $[1,\Mu_\theta]$ for $s \leq 1-\theta/2$. It then suffices to extend $h$ beyond $1-\theta$ by a function $h_\theta$ whose derivative interpolates monotonically between $h'(1-\theta)$ and the value $1$, constant for $s \geq 1-\theta/2$: both families of eigenvalues remain in $[1,\Mu_\theta]$ and \eqref{eq:Rm_truncated_ellipticity} follows.
\end{proof}

\begin{thm}[{cf. of \cite[Theorem~2]{Lieberman88}}]\label{thm:Rm_lieberman_adapted}
    Let $\Omega \subset \R^m$ be a bounded domain with $\del\Omega \in C^{1,1}$, let $\theta \in (0,1)$, $M_0, \Lambda_0 \geq 0$, and let $A_\theta$ be the truncated field of \cref{lem:Rm_truncated_flux}. Let $u \in W^{1,2}(\Omega) \cap L^\infty(\Omega)$, with $\norm{u}_{L^\infty(\Omega)} \leq M_0$, be a weak solution of the problem
    \begin{align}\label{eq:Rm_conormal_truncated}
        \begin{cases}
            \div\big(A_\theta(Du)\big) = F &\qquad \text{in $\Omega$} \\[4pt]
            A_\theta(Du)\cdot n = 0 &\qquad \text{on $\del\Omega$,}
        \end{cases}
    \end{align}
    with $F \in L^\infty(\Omega)$, $\norm{F}_{L^\infty(\Omega)} \leq \Lambda_0$.
    Then there exist
    \begin{align*}
        \beta = \beta(m,\theta) \in (0,1), \qquad C_L = C_L(m,\theta,M_0,\Lambda_0,\Omega) > 0
    \end{align*}
    such that $u \in C^{1,\beta}(\overline\Omega)$ and
    \begin{align}\label{eq:Rm_lieberman_estimate}
        \norm{u}_{C^{1,\beta}(\overline\Omega)} \ \leq\ C_L.
    \end{align}
    In particular, if $u \in C^2(\overline\Omega)$ is strictly space-like with $\norm{Du}_{L^\infty(\Omega)} \leq 1-\theta$, $Du \cdot n = 0$ on $\del\Omega$ and mean curvature $H_u \in L^\infty(\Omega)$ with $\norm{H_u}_{L^\infty(\Omega)} \leq \Lambda_0$, then estimate \eqref{eq:Rm_lieberman_estimate} holds with
    $M_0 = \norm{u}_{L^\infty(\Omega)}$.
\end{thm}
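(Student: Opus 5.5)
The plan is to reduce the statement to Lieberman's global $C^{1,\beta}$ estimate for conormal problems \cite[Theorem~2]{Lieberman88}. The work is in checking his structure conditions for the truncated field $A_\theta$ with constants depending only on $m$ and $\theta$, and then identifying the space-like graph problem with the truncated one.

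First I will verify the structural hypotheses using \cref{lem:Rm_truncated_flux}. The field $A_\theta\in C^\infty(\R^m;\R^m)$ satisfies \eqref{eq:Rm_truncated_ellipticity}, so it is uniformly elliptic with ellipticity ratio $\Mu_\theta$. Since $h_\theta(s)/s\in[1,\Mu_\theta]$, it has linear growth, $|A_\theta(p)|\le \Mu_\theta|p|$, and coercivity, $A_\theta(p)\cdot p\ge |p|^2$. These are exactly the $p=2$ growth conditions in Lieberman's framework, with the boundary operator being the conormal $A_\theta(Du)\cdot n$ and zero boundary datum. The right-hand side $F$ is bounded by $\Lambda_0$, and $\del\Omega\in C^{1,1}$ meets his regularity requirement on the domain.

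Next I will supply the a priori quantities his theorem takes as input. The sup bound $M_0$ is given. An energy bound follows by testing the weak formulation with $u$ itself; the boundary term vanishes by the conormal condition, giving
\begin{align*}
\int_\Omega |Du|^2\,dx \le \int_\Omega A_\theta(Du)\cdot Du\,dx \le \Lambda_0 M_0|\Omega|.
\end{align*}
With these, Lieberman's result yields $u\in C^{1,\beta}(\overline\Omega)$ together with \eqref{eq:Rm_lieberman_estimate}. In his argument the Hölder exponent depends only on the dimension and the ellipticity ratio, hence $\beta=\beta(m,\theta)$. The constant depends in addition on $M_0$, $\Lambda_0$ and the $C^{1,1}$ character of $\del\Omega$.

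For the ``in particular'' part I take $u\in C^2(\overline\Omega)$ with $|Du|\le 1-\theta$. Since $A_\theta=A$ on $\{|p|\le 1-\theta\}$, we have $A_\theta(Du)=w_uDu$ pointwise, so $\div(A_\theta(Du))=H_u\doteq F$ with $\norm{F}_{L^\infty}\le\Lambda_0$. Moreover $A_\theta(Du)\cdot n=w_u\,Du\cdot n=0$ on $\del\Omega$. Because $u$ is $C^2$ up to the boundary, the divergence theorem shows it is a weak solution of \eqref{eq:Rm_conormal_truncated}, and the first part applies with $M_0=\norm{u}_{L^\infty(\Omega)}$.

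The main obstacle I expect is matching Lieberman's precise hypotheses and constant dependence, in particular confirming that $\beta$ does not depend on $\Lambda_0$, $M_0$ or $\Omega$. If his statement lets $\beta$ depend on more data, I would first try to track his proof: the interior De Giorgi step for the differentiated equation, plus the flattening and reflection step near the boundary. The aim is to show that only the ellipticity ratio enters the exponent, while the remaining data enter only the constant.
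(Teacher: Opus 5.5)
Your proposal is correct and matches the paper's proof: both reduce the statement to \cite[Theorem~2]{Lieberman88}, using \cref{lem:Rm_truncated_flux} to verify its structure conditions for $A_\theta$, so that $\beta$ depends only on $m$ and the ellipticity ratio $\Mu_\theta$, i.e.\ on $(m,\theta)$. Both handle the last assertion by noting that $A_\theta(Du)=w_uDu$ when $|Du|\le 1-\theta$. Your energy bound from testing with $u$ is harmless but unnecessary, since the paper passes Lieberman's theorem only the sup bound $M_0$ and the bound $\Lambda_0$ on $F$.
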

\begin{proof}
    Problem \eqref{eq:Rm_conormal_truncated} falls within the scope of \cite[Theorem~2]{Lieberman88} with the following choices. The field is $A(x,z,p) = A_\theta(p)$: the structure conditions (0.3a), (0.3b) in \cite{Lieberman88} hold in the non-degenerate case $m_{\mathrm{Lieb}} = 0$, $\kappa = 1$, with $\lambda = 2^{-1}$ and $\Lambda = \Mu_\theta$ by \eqref{eq:Rm_truncated_ellipticity} (the exponents $(\kappa+|p|)^{0} = 1$); condition (0.3c) is satisfied with $\alpha = 1$ and vanishing constant, since $A_\theta$ does not depend on $(x,z)$. The lower-order term is $B(x,z,p) = -F(x)$, which verifies (0.3d) with constant $\Lambda_0$. The conormal datum is $\phi \equiv 0$, which verifies (0.6) with $\Phi = 0$ (in particular the orientation of the normal is immaterial). The constants $\beta$ and $C_L$ of \cite[Theorem~2]{Lieberman88} then depend only on $(m,\alpha,\Lambda/\lambda) = (m,1,2\Mu_\theta)$, that is, on $(m,\theta)$, and on $(m,\theta,M_0,\Lambda_0,\Omega)$ respectively. 
    
    For the last assertion it suffices to observe that, if $\norm{Du}_{L^\infty(\Omega)} \leq 1-\theta$, then $A_\theta(Du) = A(Du) = w\,Du$ pointwise, so that $u$ is a weak solution of \eqref{eq:Rm_conormal_truncated} with $F = H_u$: the weak formulation is \eqref{eq:Rm_weak_graph} rewritten with respect to the Lebesgue measure, and the conormal condition follows by integrating by parts against $\phi \in C^1(\overline\Omega)$ and using $Du\cdot n = 0$.
\end{proof}

\addcontentsline{toc}{section}{Bibliography}
\printbibliography[title={Bibliography}]

\end{document}